\newtheorem{theo}{Theorem}[section]
\newtheorem{defn}[theo]{Definition}
\newtheorem{rem}[theo]{Remark}
\newtheorem{exmp}[theo]{Example}
\newtheorem{prop}[theo]{Proposition}
\newtheorem{cor}[theo]{Corollary}
\newtheorem{con}[theo]{Conjecture}
\newcommand{\Sat}{\operatorname{Sat}}
\newcommand{\Act}{\operatorname{Act}}
\newcommand{\Pos}{\operatorname{Pos}}
\newcommand{\supp}{\operatorname{supp}}
\newcommand{\R}{\mathbb{R}}
\newcommand{\Rmax}{\R_{\max}}
\newcommand{\Rmin}{\R_{\min}}
\newcommand{\bomega}{\bigoplus_{\omega}}
\newcommand{\idxset}{\mathcal{I}}
\newcommand{\solset}{\mathcal{S}}
\newcommand{\dbm}[1]{\mathtt{DBM}(#1)}
\newcommand{\zone}[1]{\mathtt{Zone}(#1)}
\newcommand{\tred}[1]{\textcolor{red}{#1}}
\newcommand{\ceils}[1]{\lceil#1\rceil}
\title{On the maxmin-$\omega$ eigenspaces and their over-approximation by zones}
\author{Muhammad Syifa'ul Mufid\footnote{Department of Mathematics, Institut Teknologi Sepuluh Nopember, Jl Arif Rahman Hakim, Surabaya,
            60111, Indonesia. Email: syifaul.mufid@matematika.its.ac.id}\and 
            Ebrahim Patel\footnote{School of Computing and Mathematical Sciences, University of Greenwich, Old Royal Naval College, Park Row, London, SE10 9LS, UK. Email: e.patel@greenwich.ac.uk}\and
            Serge\u{\i} Sergeev\footnote{University of Birmingham, School of Mathematics, Edgbaston B15 2TT, UK. Email: s.sergeev@bham.ac.uk}}
\date{}
\begin{document}
\maketitle
\makeatother

\begin{abstract}
Maxmin-$\omega$ dynamical systems were previously introduced as a generalization of dynamical systems expressed by tropical linear algebra. To describe steady states of such systems one has to study an eigenproblem of the form $A\otimes_{\omega} x=\lambda+x$ where $\otimes_{\omega}$ is the maxmin-$\omega$ matrix-vector multiplication. This eigenproblem can be viewed in more general framework of nonlinear Perron-Frobenius theory. However, instead of studying such eigenspaces directly we develop a different approach: over-approximation by zones. These are traditionally convex sets of special kind which proved to be highly useful in computer science and also relevant in tropical convexity. We first construct a sequence of zones over-approximating a maxmin-$\omega$ eigenspace. Next, the limit of this sequence is refined in a heuristic procedure, which yields a refined zone and also the eigenvalue $\lambda$ with a high success rate. Based on the numerical experiments, in successful cases there is a column of the difference bound matrix (DBM) representation of the refined zone which yields an eigenvector. 

{\em Keywords:} maxmin-$\omega$ system, eigenvalue, eigenvector, nonlinear Perron-Frobenius, zone\\
{\em MSC classification:} 15A80, 15A18, 47J10
\end{abstract}

\section{Introduction}
\label{s:intro}
Let us consider a typical discrete event dynamical system modelled by means of the tropical (max-plus) linear algebra. In this system, there is a team of $n$ machines $P_1,\ldots, P_n$ working in cycles (it could also be processors or individuals in a different application). The machines crucially depend on each other as each machine needs other machines' products to proceed (alternatively, we might imagine the propagation of pieces of information in a network).  The starting times of these machines in the next cycle depend on their starting times in the current cycle. The time lag necessary to bring a product from machine $i$ to machine $j$ is denoted by $A(i,j)$. Suppose that $x_i(k)$ is the starting time of the $i$th machine in the $k$th cycle and that each machine needs to wait for 
all products made in the previous production cycle to arrive before they can start the present production cycle. In this case the starting times in the $(k+1)$th 
production cycle are given by 
\begin{equation}
\label{MaxDynamics}
x_i(k+1)=\max_{j=1,\ldots,n} A(i,j)+x_j(k)\quad\forall i=1,\ldots,n 
\end{equation}
Now suppose that each machine can proceed after the first product produced in the previous cycle arrives. Note that this is much easier to imagine in a network application where the machines are replaced with individuals, who propagate some pieces of information~\cite{EPNewThreshold}. In this case we have 
\begin{equation}
\label{MinDynamics}
x_i(k+1)=\min_{j=1,\ldots,n} A(i,j)+x_j(k)\quad\forall i=1,\ldots,n 
\end{equation}
In the present paper we consider an intermediate case where each machine waits only for $(\omega\cdot 100)\%$ of products from all machines to arrive, and then starts a new round of production immediately. Here, $0\leq\omega\leq 1$, and we will only consider $\omega=p/n$ where $p=1,\ldots,n$. In terms of $p$, it is the $p$th fastest product received by machine $i$ from machine $j$ which determines the next starting time $x_i(k+1)=A(i,j)+x_j(k)$ of the $i$th machine. We can express this mathematically as $x(k+1)=A\otimes_{\omega} x(k)$, where $(A\otimes_{\omega} x(k))_i$ is the $p$th smallest number among $A(i,1)+x_1(k),\ldots,A(i,n)+x_n(k)$. 

For starting time vectors $x(k)$ to be well predictable, we would like the initial vector $x=x(0)$ to satisfy $A\otimes_{\omega} x=\lambda_p +x$, where $p=\omega n$. That is, we would like it to be a maxmin-$\omega$ eigenvector. Together with an observation (after some experimentation) that $x(k)$ converges to such an eigenvector after a finite time at least in some cases~\cite{EPthesis}, this provides a motivation to study the sets of such eigenvectors, which we call the maxmin-$\omega$ eigenspaces. 

Min-plus and max-plus eigenspaces, i.e., sets of vectors satisfying $A\otimes_{\omega} x=\lambda_p+x$ for $\omega=1/n$ and $\omega=1$ respectively, have been well studied \cite{baccelli92,Butkovic2010} and comprehensively described, but this is not the case for intermediate $\omega\colon 1/n<\omega<1$. For an initial approach one can observe that the maxmin-$\omega$ matrix-vector multiplication is a special case of the so-called min-max functions studied in \cite{GG-04,Gun1994, Subiono} (among many other papers) and, more generally, increasing and additively homogeneous functions. This observation is very useful since it allows us to apply the nonlinear Perron-Frobenius theory and, in particular, the monotonicity of spectral radius~\cite{Nuss86}. Note that another relevant concept is that of ambitropical cones \cite{AGV}: the maxmin-$\omega$ eigenspaces studied in the present paper can be seen as a special case of them. In particular, \cite{AGV} Proposition 4.18 implies that the maxmin-$\omega$ eigenspaces are connected sets, unlike the solution sets of maxmin-$\omega$ linear systems $A\otimes_{\omega} x=b$ which we studied in \cite{Maxminomega1}. However, these eigenspaces are not convex in a tropical or traditional sense and, although they can be described by generators (see \cite{AGV} Section 8), such a description is hard to be found algorithmically.

For these reasons, we take a different approach by developing the idea of over-approximating the maxmin-$\omega$ eigenspaces by zones. These are convex sets of special type, highly useful in some areas of computer science~\cite{Dill,Mine} and important in tropical convexity~\cite{DS-04}. Zones are also closely related to alcoved polytopes studied in \cite{LP-07,MdlP} (among many other works). The first main result of the present paper is that we construct a sequence of zones over-approximating a maxmin-$\omega$ eigenspace (see \Cref{a:sequence}). We then also suggest a heuristic procedure based on a bisection principle that refines the limit of that sequence and, with a good success rate, also yields the corresponding eigenvalue $\lambda_p$. 

The rest of this paper is organized as follows. \Cref{s:prel} is devoted to the necessary preliminaries on tropical linear algebra, nonlinear Perron-Frobenius theory, zones and their difference bound matrix (DBM) representations. We also introduce saturation graphs for the maxmin-$\omega$ eigenproblem, which are similar to the saturation graphs for the max-plus eigenproblem (\cite{Heid2006}, Section 4.2). \Cref{s:main} presents the main results of the paper. The sequence of over-approximations by zones is constructed in \Cref{ss:sequence}. Next, some sufficient conditions for entries $(i,j)$ to be active or inactive are given in \Cref{ss:active}. Finally, \Cref{ss:refinement} presents a heuristic procedure to refine the limit of the sequence of zones based on the observations of the previous subsection and on the bisection principle.

\section{Preliminaries}
\label{s:prel}

%The purpose of this section is to introduce some crucial notations and basic definitions relating to max-plus algebra, maxmin-$\omega$ operation and Perron-Frobenius theory of additively homogeneous and monotone functions.  We also present some preliminary results related to the maxmin-$\omega$ eigenproblem using the Perron-Frobenius theory of such functions.

\subsection{Max-plus, min-plus and maxmin-$\omega$ algebras}
\label{s:algebra}

We start with some basic definitions of the max-plus semiring and linear algebra over that semiring.

\begin{defn}[Max-plus semiring \cite{baccelli92,Butkovic2010, Heid2006}] 
\label{def:max-plus}
{\rm The max-plus semiring, often also called the max-plus algebra, is the set $\Rmax$ equipped with arithmetical operations of ``addition'' $\oplus$ defined by $a\oplus b := \max\{a,b\}$
and ``multiplication'' $a\otimes b:=a + b$.}
\end{defn}
\begin{defn}[Max-plus matrices \cite{baccelli92,Butkovic2010,Heid2006}]
{\rm The set of $m\times n$ matrices over the max-plus algebra is denoted by $\Rmax^{m\times n}$. For $n=1$, the set of column vectors with $m$ elements in the max-plus algebra is denoted by $\Rmax^m$ instead. Given a matrix $A$, we define $A(i,j)$ as the element of $A$ at the $i$-th row and $j $-th column. %Furthermore, $A(i,\cdot)$ and $A(\cdot,j)$ denote the $i$-th row and $j$-th column of $A$, respectively. 
As in the classical linear algebra, the operations in \Cref{def:max-plus} can be extended to matrices and vectors: for $ A,B\in \Rmax^{m\times n}, C\in \Rmax^{n\times p}$ and $\alpha\in\Rmax$, 
\begin{align*}
\nonumber [A\oplus B](i,j) &= A(i,j)\oplus B(i,j)=\max\{A(i,j),B(i,j)\},\\
\nonumber [A\otimes C](i,j) &= \bigoplus_{k=1}^n A(i,k)\otimes C(k,j)=\max_{1\leq k\leq n}\{A(i,k)+C(k,j)\},\\
\nonumber [\alpha\otimes A](i,j) &= \alpha\otimes A(i,j)=\alpha+A(i,j).
\end{align*}
For a square matrix $A\in \Rmax^{n\times n}$, the $k$-th power of $A$ is defined as $A^{k}=A\otimes A\otimes \ldots \otimes A$ ($k$ times). For $k=0$, the $0$-th power of $A$ yields an $n\times n$ identity matrix in the max-plus algebraic sense. All diagonal entries of this matrix are $0$ and all off-diagonal elements are $-\infty$.}
\end{defn}

The following defines the natural dual of the max-plus semiring, which is called the min-plus semiring.

\begin{defn}[Min-plus semiring]
{\rm The min-plus semiring (also called the min-plus algebra) is the set $\Rmin$ equipped with ``addition'' $\oplus^\prime$ defined by $a\oplus^\prime b=\min\{a,b\}$ and ``multiplication'' $\otimes^\prime$ defined by $a\otimes^\prime b=a+b$.}
\end{defn}

We will omit the formal definition of notations and operations with matrices in min-plus algebra as they are defined similarly to those of max-plus algebra. Observe that $a\oplus^\prime b=-((-a)\oplus (-b))$, under the convention that $-(+\infty)=-\infty\in \Rmax$ and $-(-\infty)=+\infty\in \Rmin$.

\begin{defn}[Maxmin-$\omega$ operation \cite{EPthesis}]
{\rm Given a multiset (i.e., a set with possible repetitions of elements) $S\subseteq \mathbb{R}$ with $n$ elements, the \textit{maxmin-$\omega$} operation 
\begin{equation}
\label{eq:omega-operation}
    \bomega S
\end{equation}
yields the $\lceil \omega n \rceil$-th smallest element of $S$ for $0<\omega \leq 1$. For the sake of simplicity, without loss of generality, we always assume that $\omega=p/n$ for some $p\in [n]$ where $[n]$ conventionally denotes the set $ \{1,\ldots,n\}$.}
\end{defn}

It is straightforward to see that when $\omega = 1/n$ (resp. $\omega=1)$, operation \eqref{eq:omega-operation} corresponds to min-plus addition (resp. max-plus addition).

The following notation helps to express 
the maxmin-$\omega$ operation as a rather elementary function involving both $\oplus$ and $\oplus^\prime$ arithmetics.

\begin{defn}[The set of $k$-element subsets]
{\rm For each set $S$, notation $|S|$ refers to the cardinality of $S$. For $0\leq k\leq |S|$, we define $\mathcal{P}(S,k)$ as the set of all $k$-element subsets of $S$.} 
\end{defn}

 We now express \eqref{eq:omega-operation} as the combination of $\oplus$ and $\oplus^\prime$ operations.

\begin{prop}[\cite{EPthesis}]
\label{prop:max-min-function}
Given a multiset $S=\{s_1,\ldots,s_n\} \subset \R $ and suppose that $\omega =p/n$ for $p\in [n]$, then
\begin{align}
\label{eq:bomega-min}
    \bomega S &= \min_{C\in \mathcal{P}([n],~p)} \max_{i\in C} s_i
    % \bigoplus_{P\in \mathcal{P}(S,~p)}\hspace*{-2ex}{}^\prime\left\{ \bigoplus P\right\}
    \\
    \label{eq:bomega-max}
    &= \max_{C\in \mathcal{P}([n],~n+1-p)} \min_{i\in C} s_i
\end{align}
\end{prop}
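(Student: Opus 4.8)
The plan is to establish \eqref{eq:bomega-min} directly, then obtain \eqref{eq:bomega-max} from it by an order-reversal (duality) argument. For \eqref{eq:bomega-min}, assume without loss of generality that the $s_i$ are sorted, say $s_{\pi(1)} \leq s_{\pi(2)} \leq \cdots \leq s_{\pi(n)}$ for some permutation $\pi$, so that $\bomega S = s_{\pi(p)}$ by definition. For any $p$-element subset $C \in \mathcal{P}([n], p)$, the pigeonhole principle guarantees that $C$ contains at least one index from $\{\pi(p), \pi(p+1), \ldots, \pi(n)\}$ (since that set has $n-p+1$ elements and its complement has only $p-1$ elements), hence $\max_{i \in C} s_i \geq s_{\pi(p)}$. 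This shows the right-hand side of \eqref{eq:bomega-min} is $\geq s_{\pi(p)}$. Conversely, the particular choice $C = \{\pi(1), \ldots, \pi(p)\}$ gives $\max_{i \in C} s_i = s_{\pi(p)}$, so the minimum is attained and equals $s_{\pi(p)} = \bomega S$.

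For \eqref{eq:bomega-max}, I would invoke the substitution $s_i \mapsto -s_i$, which reverses the order: the $p$-th smallest element of $\{s_1, \ldots, s_n\}$ is the negative of the $p$-th largest element of $\{-s_1, \ldots, -s_n\}$, equivalently the negative of the $(n+1-p)$-th smallest element of $\{-s_1,\ldots,-s_n\}$. Applying \eqref{eq:bomega-min} with $p$ replaced by $n+1-p$ to the multiset $\{-s_1, \ldots, -s_n\}$ and then negating both sides, using $\min(-a,-b) = -\max(a,b)$ and $\max(-a,-b) = -\min(a,b)$ to push the sign through the nested $\min$–$\max$, turns $\min_{C \in \mathcal{P}([n], n+1-p)} \max_{i \in C}(-s_i)$ into $-\max_{C \in \mathcal{P}([n], n+1-p)} \min_{i \in C} s_i$, which yields \eqref{eq:bomega-max} after cancelling the outer negation. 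Alternatively, one can prove \eqref{eq:bomega-max} directly by the same sorting-and-pigeonhole argument: every $(n+1-p)$-element set meets $\{\pi(1), \ldots, \pi(p)\}$, giving $\min_{i\in C} s_i \leq s_{\pi(p)}$, with equality for $C = \{\pi(p), \ldots, \pi(n)\}$.

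I do not anticipate a genuine obstacle here — the statement is a clean minimax identity for order statistics and the pigeonhole counting is the whole content. The only points requiring a little care are bookkeeping ones: handling possible repetitions in the multiset $S$ (the argument is insensitive to ties since it only uses inequalities on sorted values, but one should phrase it via a fixed sorting permutation rather than ``the'' sorted order), checking the boundary cases $p=1$ and $p=n$ reduce correctly to $\min$ and $\max$, and being careful with the sign conventions for $\pm\infty$ if one wants the identity over $\Rmax$ rather than $\R$ — though since $S \subset \R$ is assumed finite-valued, this last issue does not arise.
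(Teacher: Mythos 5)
Your proof is correct. Note that the paper itself gives no argument for this proposition --- its proof is simply ``Omitted,'' with the statement attributed to the cited thesis --- so there is no in-paper proof to compare against; your sorting-plus-pigeonhole argument for \eqref{eq:bomega-min} and the order-reversal (or symmetric counting) derivation of \eqref{eq:bomega-max} are the standard way to establish this order-statistic minimax identity, and they fill the gap the paper leaves. Your closing remarks on ties and on the boundary cases $p=1$, $p=n$ are the right sanity checks and require no further elaboration given that $S\subset\R$ is finite-valued.
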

\begin{proof}
Omitted.
\end{proof}

\begin{defn}[CNF and DNF]
{\rm Expressions \eqref{eq:bomega-min} and \eqref{eq:bomega-max} are, respectively, called the conjunctive normal form (CNF) and the disjunctive normal form (DNF) of the maxmin-$\omega$ operation.}
\end{defn}

\begin{rem}
{\rm In this paper, following the approach of~\cite{Maxminomega1}, we limit the scope to maxmin-$\omega$ operations with finite numbers, since including both $-\infty$ and $+\infty$ in this operation can be confusing (for example, consider the result of $-\infty+\infty$) and not practical in view of the applications that can be considered~\cite{EPthesis}.
}
\end{rem}

\subsection{Elements of Perron-Frobenius theory for the maxmin-$\omega$ eigenproblem}
\label{ss:PF}

We are going to consider the following generalization of the max-plus and min-plus eigenproblems.

\begin{defn}[Maxmin-$\omega$ eigenproblem]
{\rm Given a matrix $A\in \R^{n\times n}$ and $\omega=p/n$ for $p\in [n]$, we call $\lambda_p$ the maxmin-$\omega$ eigenvalue of $A$ if there exists $x\in \R^n$ such that
\begin{equation}
\label{eq:problem}
A\otimes_\omega x=\lambda_p\otimes x.
\end{equation}
Such $x$ is called a maxmin-$\omega$ eigenvector of $A$ associated with $\lambda_p$. The set of these eigenvectors is denoted by $E_p(A)$ and is called the maxmin-$\omega$ eigenspace of $A$ associated with $\lambda_p$.} 
\end{defn}

Algebraically, because of \Cref{prop:max-min-function}, the eigenproblem \eqref{eq:problem} can be expressed as a system of equations where maxmin-$\omega$ operations are in CNF:
\begin{align}
    \min_{C\in \mathcal{P}([n],~p)} \max_{j\in C} A(i,j)+x_j=\lambda_p+x_i~\text{for}~i\in [n],
\end{align}
or in DNF:
\begin{align}
    \max_{C\in \mathcal{P}([n],~n+1-p)}\min_{j\in C} A(i,j)+x_j
    =\lambda_p+x_i~\text{for}~i\in [n].
\end{align}

These forms~\eqref{eq:bomega-min} and~\eqref{eq:bomega-max} show that the maxmin-$\omega$ eigenproblem, while being a generalisation of max-plus and min-plus eigenproblems, is a special case of the eigenproblem for the so-called {\em min-max functions} studied, e.g., in \cite{GG-98,Gun1994,Subiono,SO-97}. Furthermore, any min-max function belongs to the class of functions $f\colon (\R\cup\{-\infty\})^n \to (\R\cup\{-\infty\})^n$ which satisfy the following properties: 
\begin{itemize}
    \item[1.] $f(\lambda+x)=\lambda+f(x)$ for $\lambda\in \R$ \hfill (additive homogeneity),
    \item[2.] $x\leq y\Rightarrow f(x)\leq f(y)$ \hfill (monotonicity).
\end{itemize}
%For $x\in \Rmax[n]$, let us introduce the notation $\supp(x)=\{i\mid x_i\neq -\infty\}$ and observe that if $\supp(x)\subseteq\supp(y)$, then $c+x\leq y$ for some $c\in\R$. The largest such $c$ will be denoted by $y/x$. 
For such functions we recall the following result, which extends the Perron-Frobenius theory for nonnegative matrices.
\begin{prop}[Coro. of Nussbaum~\cite{Nuss86} Theorem 3.1, also \cite{TropicalMPG} Lemma 2.8]
Let $f\colon  (\R\cup\{-\infty\})^n\to \R\cup\{-\infty\}^n$ be an additively homogenous and monotone function. 
\begin{itemize}
    \item[1.] There is a vector $x$ with $\supp(x)\neq \emptyset$ and a scalar $\rho\in \mathbb{R}$ such that $f(x)=\rho+x$.
    \item[2.] Let $r$ be the largest $\rho$ for which such vector $x$ exists. Then one has the following identities:
    \begin{equation}
    \label{e:CW}
       \begin{split}
          r&=\max\{\alpha\mid f(x)\geq \alpha+x\  \text{for some $x\in \Rmax[n]$ with $\supp(x)\neq \emptyset$}\}\\
           &=\inf\{\beta\mid f(x)\leq \beta+x\ \text{for some $x\in\R^n$}\} 
       \end{split} 
    \end{equation}
\end{itemize}
\end{prop}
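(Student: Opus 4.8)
The plan is to organise the argument around the \emph{last} expression in~\eqref{e:CW}, taking
\[
r \;:=\; \inf\bigl\{\beta\in\R \;\bigm|\; f(y)\le \beta\mathbf 1 + y \text{ for some } y\in\R^n\bigr\}
\]
as a working definition and then establishing everything else around it. The set on the right is nonempty and unbounded above (for any $y$ take $\beta\ge\max_i\bigl(f(y)_i-y_i\bigr)$); the one nontrivial preliminary point is that it is bounded below, so that $r\in\R$ — this, together with the fact that $r$ is attained, is exactly the content I would import from Nussbaum~\cite{Nuss86} (Theorem 3.1) and \cite{TropicalMPG} (Lemma 2.8). The structural tool that I would record explicitly, and which makes such fixed-point machinery applicable, is the Crandall--Tartar inequality: applying $f$ to $y-\|x-y\|_\infty\mathbf 1\le x\le y+\|x-y\|_\infty\mathbf 1$ and using additive homogeneity gives $\|f(x)-f(y)\|_\infty\le\|x-y\|_\infty$, so $f$ is continuous (and extends continuously to the boundary points of $(\R\cup\{-\infty\})^n$ that arise below as limits).

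The heart of the argument — and the part that simultaneously yields statement~1 — is that $r$ is realised by an eigenvector with nonempty support. Here I would pick $\beta_k\downarrow r$ and finite $y^{(k)}$ with $f(y^{(k)})\le\beta_k\mathbf 1+y^{(k)}$, normalise (by additive homogeneity) so that $\max_i y^{(k)}_i=0$, and then exploit the \emph{compactness} of the extended cube $[-\infty,0]^n$ to pass to a subsequence $y^{(k)}\to\bar x\in[-\infty,0]^n$. Since $\max_i\bar x_i=0$ we have $\supp(\bar x)\neq\emptyset$, and letting $k\to\infty$ in $f(y^{(k)})\le\beta_k\mathbf 1+y^{(k)}$ via continuity of $f$ gives $f(\bar x)\le r\mathbf 1+\bar x$. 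The remaining step is to upgrade this to an equality: if $f(\bar x)_{i_0}<r+\bar x_{i_0}$ at some coordinate, then raising by a small $\delta>0$ exactly those coordinates at which the inequality is slack (and leaving the others) produces a finite vector $z$ with $\max_i\bigl(f(z)_i-z_i\bigr)<r$, contradicting the definition of $r$; hence $f(\bar x)=r\mathbf 1+\bar x$, which is statement~1 with $\rho=r$. This paragraph contains the whole difficulty: the finiteness of $r$, the non-degeneration of the minimizing sequence (the normalization plus compactness of $[-\infty,0]^n$ handles this, and also explains why one only gets $\supp(\bar x)\neq\emptyset$, not a finite eigenvector), and the equality upgrade via minimality.

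Granting the above, statement~2 follows by soft arguments. First, $\bar x$ satisfies $f(\bar x)\ge r\mathbf 1+\bar x$ with $\supp(\bar x)\neq\emptyset$, so $r\le\max\{\alpha\mid f(x)\ge\alpha\mathbf 1+x,\ \supp(x)\neq\emptyset\}$; conversely any eigenpair $(\rho,x)$ with $\supp(x)\neq\emptyset$ has $f(x)\ge\rho\mathbf 1+x$, so every such eigenvalue — in particular the largest — is $\le\max\{\alpha\mid\cdots\}$. It then remains to see $\max\{\alpha\mid\cdots\}\le\inf\{\beta\mid\cdots\}=r$, which is the ``contact point'' estimate: given $f(x)\ge\alpha\mathbf 1+x$ with $\supp(x)\neq\emptyset$ and $f(y)\le\beta\mathbf 1+y$ with $y\in\R^n$, set $c:=\max_{i\in\supp(x)}\bigl(x_i-y_i\bigr)\in\R$, so that $x\le y+c\mathbf 1$ with equality at some $i_0\in\supp(x)$; then $\alpha\mathbf 1+x\le f(x)\le f(y+c\mathbf 1)=f(y)+c\mathbf 1\le\beta\mathbf 1+y+c\mathbf 1$, and reading off coordinate $i_0$ yields $\alpha\le\beta$. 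Chaining, $r\le\max\{\alpha\mid\cdots\}\le\inf\{\beta\mid\cdots\}=r$, so all three coincide and $r$ is the largest eigenvalue, which is exactly~\eqref{e:CW}. Since the result is classical, in the paper I would cite~\cite{Nuss86,TropicalMPG} for statement~1 and include only this short contact-point computation for statement~2.
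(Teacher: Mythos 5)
The paper offers no proof of this proposition at all: it is imported verbatim from Nussbaum's Theorem 3.1 and Lemma 2.8 of Akian--Gaubert--Guterman, so the only comparison available is with the literature. Your soft steps are fine: the Crandall--Tartar nonexpansiveness estimate, the normalization $\max_i y^{(k)}_i=0$ followed by compactness of $[-\infty,0]^n$, and the contact-point computation giving $\alpha\le\beta$ are all correct, and the last one is a clean way to obtain $\max\{\alpha\mid\cdots\}\le\inf\{\beta\mid\cdots\}$.

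The gap is concentrated in your ``heart of the argument'' paragraph, and it cannot be bypassed, because statement~2 needs the inequality $\inf\{\beta\mid\cdots\}\le r$, i.e.\ precisely the attainment of the infimum by a supported eigenvector; this does not follow from statement~1 together with the contact-point estimate. Two steps there fail as written. First, a monotone additively homogeneous $f$ is nonexpansive (hence continuous) on $\R^n$, but it need not extend continuously to points with $-\infty$ entries: the map with every coordinate equal to $x_1$ when $x_2=-\infty$ and to $x_1+1$ otherwise is monotone and additively homogeneous, yet discontinuous along $y^{(k)}=(0,-k)\to(0,-\infty)$. This particular step is repairable using monotonicity alone (for any $\epsilon>0$ and large $k$ one has $\bar x-\epsilon\le y^{(k)}$, hence $f(\bar x)-\epsilon\le f(y^{(k)})\le\beta_k+y^{(k)}$, and one lets $k\to\infty$, then $\epsilon\to 0$). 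Second, and more seriously, the ``equality upgrade'' does not work: if you raise by $\delta$ exactly the slack coordinates $S=\{i\colon f(\bar x)_i<r+\bar x_i\}$, obtaining $z=\bar x+\delta\mathbf 1_S$, then for $i\notin S$ you only know $f(z)_i\le f(\bar x)_i+\delta=r+\bar x_i+\delta=r+z_i+\delta$, so $\max_i\bigl(f(z)_i-z_i\bigr)$ may be as large as $r+\delta$, not $<r$; lowering the slack coordinates instead yields $\le r$ but never $<r$. Moreover $\bar x$, hence $z$, may have $-\infty$ entries while $r$ is an infimum over \emph{finite} vectors, so even a strict bound on $\max_i\bigl(f(z)_i-z_i\bigr)$ would not contradict the definition of $r$. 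Upgrading $f(\bar x)\le r+\bar x$ to an equality is the genuinely hard content of the cited results (typically handled by iterating $x\mapsto f(x)-r$ and controlling which coordinates collapse to $-\infty$), and the one-step perturbation does not substitute for it; citing the references for this, as the paper does for the whole proposition, is the right call.
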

The second of the equations in~\eqref{e:CW} is referred to as the Collatz-Wielandt identity (since it generalizes the well-known Collatz-Wielandt identity for the spectral radius of a nonnegative matrix). 

For $x\in \Rmax^n$, let us introduce, in the usual notation, $\supp(x)=\{i\mid x_i\neq -\infty\}$.
The following result can be also observed.

%\todo[inline]{Find reference for the next claim.}

\begin{prop}[\cite{GS-Separation,Gun1994}]
\label{p:supports}
Let $x$ and $y$ be such that $f(x)=\lambda+x$, $f(y)=\mu+y$ and $\supp(x)\subseteq\supp(y)$ for an additively homogeneous and monotone $f$. Then $\lambda\leq\mu$.
\end{prop}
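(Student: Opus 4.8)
The plan is to translate $x$ below $y$ by an appropriate finite scalar, apply the two defining properties of $f$, and then read off the desired inequality at a coordinate where this translation is tight. Throughout I assume $\supp(x)\neq\emptyset$, which is the standing assumption in this Perron--Frobenius setting (and is automatic when $x\in\R^n$, the case of interest for the maxmin-$\omega$ eigenproblem).

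First I would set $c:=\max_{i\in\supp(x)}(x_i-y_i)$. This is a maximum over a nonempty finite index set, and for every $i\in\supp(x)$ the hypothesis $\supp(x)\subseteq\supp(y)$ gives $i\in\supp(y)$, so both $x_i$ and $y_i$ are finite and $x_i-y_i\in\R$; hence $c$ is a well-defined real number. By construction $x_i\leq c+y_i$ for all $i\in\supp(x)$, while for $i\notin\supp(x)$ we have $x_i=-\infty\leq c+y_i$. Thus $x\leq c+y$ entrywise.

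Next I would apply monotonicity of $f$ to obtain $f(x)\leq f(c+y)$, and then additive homogeneity to rewrite $f(c+y)=c+f(y)$. Combining this with the eigen-relations $f(x)=\lambda+x$ and $f(y)=\mu+y$ yields $\lambda+x\leq c+\mu+y$. Finally, choose an index $i^{*}\in\supp(x)$ attaining the maximum that defines $c$, so that $c=x_{i^{*}}-y_{i^{*}}$. Evaluating the inequality at coordinate $i^{*}$ gives $\lambda+x_{i^{*}}\leq (x_{i^{*}}-y_{i^{*}})+\mu+y_{i^{*}}=x_{i^{*}}+\mu$, and since $x_{i^{*}}$ is finite it may be cancelled, leaving $\lambda\leq\mu$.

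There is no serious obstacle here; the only point requiring care is that the support hypothesis $\supp(x)\subseteq\supp(y)$ is precisely what makes the translate $c$ finite, so that $x\leq c+y$ is a genuine rather than vacuous comparison and the cancellation at $i^{*}$ is legitimate. As a variant one could avoid evaluating at the tight coordinate and instead iterate: $x\leq c+y$ implies $\lambda+x=f(x)\leq f(c+y)=c+\mu+y$, i.e.\ $x\leq (c+\mu-\lambda)+y$, hence $x\leq (c+k(\mu-\lambda))+y$ for all $k\geq 0$; if $\lambda>\mu$ the right-hand side would tend to $-\infty$ on $\supp(x)$, contradicting finiteness of $x_i$ there. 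I would present the one-shot version as it is shorter.
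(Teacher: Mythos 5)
Your proof is correct: the translation constant $c=\max_{i\in\supp(x)}(x_i-y_i)$ is finite precisely because of the support hypothesis, the chain $\lambda+x=f(x)\leq f(c+y)=c+\mu+y$ uses exactly the two defining properties of $f$, and evaluating at a coordinate where the maximum is attained legitimately cancels the finite $x_{i^*}$. The paper itself gives no proof of this proposition (it is imported by citation from the references), and your argument is the standard one behind those sources, so there is nothing to reconcile.
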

%\begin{pf}
%Omitted.
%We have $y/x+x\leq y$. Applying $f$ to both parts we obtain $y/x+f(x)\leq f(y)$, which becomes $y/x+\lambda+x\leq \mu+y$. If $\mu=-\infty$, then $\lambda=-\infty$. If $\mu$ is finite, then $y/x+\lambda-\mu+x\leq y$. However, $y/x$ is the greatest number $c$ such that $c+x\leq y$, implying that $\lambda\leq\mu$.
%\end{pf}

It follows, in particular, that for every fixed support $S\subseteq [n]$ we can have no more than one eigenvalue, for which there is an associated eigenvector $x$ with $\supp(x)=S$. Thus the number of eigenvalues of an additively homogeneous and monotone function is finite and bounded from above by $2^n-1$. If such a function has an eigenvector with full support (i.e., a finite eigenvector), then the corresponding eigenvalue is the largest among all eigenvalues of this function. 

Let us now specialize to the maxmin-$\omega$ operation and establish the following fact. Here, for a maxmin-$\omega$ operation $A\otimes_{\omega}$, we consider its natural extension to an additively homogeneous and monotone function acting on $\Rmax^n$.

\begin{prop}
\label{p:lambdap}
Let $A\in\R^{n\times n}$ and $\omega=p/n$ for $p\in [n]$.
Furthermore, let $f^{(p)}\colon\Rmax^n\to\Rmax^n$ be defined by
\begin{equation}
% \min\limits_{C\subseteq [n]\colon |C|=p}\max_{j\in C} a_{ij}+x_j=-\infty, 
\label{e:fpdef}
f^{(p)}(x):=\min_{C\in \mathcal{P}([n],~p)} \max_{j\in C} A(i,j)+x_j.
\end{equation}    
Then:
\begin{itemize}
    \item[{\rm (i)}] $f^{(p)}$ can have only finite eigenvectors $x$ associated with $\omega$ and a finite eigenvalue $\lambda_p$,
    \item[{\rm (ii)}] $f^{(p)}$ has a unique finite eigenvalue $\lambda_p$ for each $\omega$.
\end{itemize}    
\end{prop}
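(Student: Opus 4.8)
The plan is to obtain (i) by inspecting the defining formula \eqref{e:fpdef} for $f^{(p)}$ directly on vectors that are allowed to have $-\infty$ coordinates, and then to deduce (ii) from (i) together with \Cref{p:supports} and the Nussbaum-type Perron--Frobenius corollary recalled above. The guiding observation is that, because every entry of $A$ is a finite real number, for any $x\in\Rmax^n$ with $\supp(x)=S$ the number $A(i,j)+x_j$ is finite exactly when $j\in S$; hence each coordinate $(f^{(p)}(x))_i$, being the $p$-th smallest element of the multiset $\{A(i,j)+x_j:j\in[n]\}$, equals $-\infty$ precisely when at least $p$ of the $x_j$ are $-\infty$, i.e.\ when $|S|\le n-p$, and is finite otherwise.

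For (i) I would argue as follows. Let $x$ be an eigenvector with $\supp(x)=S\ne\emptyset$ and finite eigenvalue $\lambda_p$, so that $f^{(p)}(x)=\lambda_p\otimes x$. If $|S|\le n-p$, then the observation above gives $(f^{(p)}(x))_i=-\infty$ for every $i$, which is incompatible with $(f^{(p)}(x))_i=\lambda_p+x_i\in\R$ for $i\in S$. If instead $n-p+1\le|S|\le n-1$, then $(f^{(p)}(x))_i$ is finite for every $i$, but choosing any $i\notin S$ forces $\lambda_p+x_i=-\infty$, again a contradiction. The only remaining possibility is $|S|=n$, i.e.\ $x\in\R^n$; thus every eigenvector attached to a finite eigenvalue is finite.

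For (ii), existence of a finite eigenvalue is immediate from part~1 of the Nussbaum-type corollary, which supplies $x$ with $\supp(x)\ne\emptyset$ and $\rho\in\R$ satisfying $f^{(p)}(x)=\rho\otimes x$. For uniqueness, suppose $\lambda_p$ and $\mu_p$ are finite eigenvalues with eigenvectors $x$ and $y$; by (i) both $x$ and $y$ are finite, hence $\supp(x)=\supp(y)=[n]$, and applying \Cref{p:supports} (valid since $f^{(p)}$ is additively homogeneous and monotone) to the inclusions $\supp(x)\subseteq\supp(y)$ and $\supp(y)\subseteq\supp(x)$ yields $\lambda_p\le\mu_p$ and $\mu_p\le\lambda_p$, so $\lambda_p=\mu_p$. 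I expect no genuine obstacle here; the one spot that needs care is the threshold in (i) --- the equivalence between a coordinate of $f^{(p)}(x)$ being $-\infty$ and $|\supp(x)|\le n-p$ --- which relies essentially on all entries of $A$ being finite.
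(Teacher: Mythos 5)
Your proof is correct and follows essentially the same route as the paper: the same support-size case analysis ($|S|\le n-p$ versus $n-p+1\le|S|\le n-1$) for part (i), and the same combination of the Nussbaum-type corollary with \Cref{p:supports} applied in both directions for part (ii). The only cosmetic difference is that for existence of a finite eigenvalue you read $\rho\in\R$ directly off part~1 of the corollary, whereas the paper additionally verifies finiteness via the test vector $e$ and the first identity in \eqref{e:CW}; both are valid.
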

\begin{proof} 
(i): Suppose that $x$ is an eigenvector of $f^{(p)}$ with $\supp(x)=M$. Let us consider two cases:\\
\textbf{Case 1:} $|M|\leq n-p$. In this case let $i\in M$. Then we obtain 
\begin{equation*}
% \min\limits_{C\subseteq [n]\colon |C|=p}\max_{j\in C} a_{ij}+x_j=-\infty, 
\min_{C\in \mathcal{P}([n],~p)} \max_{j\in C} A(i,j)+x_j=-\infty,
\end{equation*}
since there exists $C\subseteq [n]\backslash M$ with $|C|=p$ for which $\max_{j\in C} A(i,j)+x_j=-\infty$. This contradicts with $x$ being an eigenvector with a finite eigenvalue.\\
\textbf{Case 2:} $|M|>n-p$. In this case let $i\notin M$. Then 
\begin{equation*}
% \min\limits_{C\subseteq [n]\colon |C|=p}\max_{j\in C} a_{ij}+x_j>-\infty,
\min_{C\in \mathcal{P}([n],~p)} \max_{j\in C} A(i,j)+x_j>-\infty,
\end{equation*}
since $\max_{j\in C} A(i,j)+x_j>-\infty$ for all $C$ with $|C|=p$. This contradicts with $x$ being an eigenvector.

(ii) To show that $f^{(p)}$ has a finite eigenvalue associated with any $\omega$, it can be observed that $A\otimes_{\omega} e\geq \min_{i,j}A(i,j)+e$ where $e$ is an $n$-component vector with all entries equal to $0$, after which the first property of~\eqref{e:CW} can be applied. Denote this eigenvalue by $\lambda_p$, then the corresponding eigenvector has a full support by part (i). Proposition~\ref{p:supports} then implies that such an eigenvalue is unique. 
\end{proof}

\begin{cor}
Let $A\in\R^{n\times n}$ and $\omega=p/n$ for $p\in[n]$. Then $A$ has a unique eigenvalue with respect to the threshold $\omega$.
\end{cor}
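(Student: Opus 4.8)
The plan is to deduce the statement immediately from \Cref{p:lambdap}. The key observation is that, when the function $f^{(p)}$ of~\eqref{e:fpdef} is restricted to finite vectors $x\in\R^n$, it coincides with the maxmin-$\omega$ matrix-vector multiplication $A\otimes_\omega$. Consequently, a scalar $\lambda$ is an eigenvalue of $A$ with respect to the threshold $\omega$ (in the sense of the maxmin-$\omega$ eigenproblem, which by definition only admits eigenvectors $x\in\R^n$) if and only if $\lambda$ is a finite eigenvalue of $f^{(p)}$ possessing a finite eigenvector; note that a finite $x$ forces $f^{(p)}(x)$ to be finite, so such a $\lambda$ is automatically finite.

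First I would settle existence: \Cref{p:lambdap}(ii) provides a finite eigenvalue $\lambda_p$ of $f^{(p)}$, and \Cref{p:lambdap}(i) guarantees that any eigenvector associated with a finite eigenvalue is itself finite. Hence $\lambda_p$ is an eigenvalue of $A$ with respect to $\omega$, so the set of such eigenvalues is non-empty.

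For uniqueness, suppose $\lambda$ and $\mu$ are both eigenvalues of $A$ with respect to $\omega$, witnessed by $x,y\in\R^n$ with $f^{(p)}(x)=\lambda+x$ and $f^{(p)}(y)=\mu+y$. Since $x$ and $y$ are finite, $\supp(x)=\supp(y)=[n]$, so \Cref{p:supports}, applied to the monotone and additively homogeneous $f^{(p)}$ in both directions, yields $\lambda\le\mu$ and $\mu\le\lambda$, that is, $\lambda=\mu$. Equivalently, one may simply invoke the uniqueness of the finite eigenvalue already recorded in \Cref{p:lambdap}(ii).

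I expect essentially no substantial obstacle here: the real content was proved in \Cref{p:lambdap}, where having an eigenvector of partial support was shown to be incompatible with having a finite eigenvalue. The only point requiring a little care is the bookkeeping that identifies ``eigenvalue of $A$ with respect to the threshold $\omega$'' with ``finite eigenvalue of the $\Rmax^n$-extension $f^{(p)}$ admitting a finite eigenvector'', i.e.\ checking that passing to the extension neither creates nor destroys the eigenvalues relevant to the original finite eigenproblem.
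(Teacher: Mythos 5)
Your argument is correct and is essentially the paper's own: the corollary is stated without a separate proof precisely because it follows immediately from \Cref{p:lambdap} (existence of a finite eigenvalue with a necessarily finite eigenvector, and uniqueness via \Cref{p:supports}). Your additional bookkeeping identifying eigenvalues of $A$ with finite eigenvalues of the extension $f^{(p)}$ is a reasonable clarification but does not change the route.
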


\begin{prop}
    For any matrix $A\in \R^{n\times n}$, we have $\lambda_1\leq\lambda_2\leq \cdots\leq \lambda_n$.
\end{prop}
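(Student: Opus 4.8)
The plan is to deduce the monotone chain from two facts: the maps $f^{(p)}$ defined in \eqref{e:fpdef} increase pointwise with $p$, and $\lambda_p$ is the distinguished (largest) eigenvalue of $f^{(p)}$, which is governed by the identities in~\eqref{e:CW}.

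\textbf{Step 1 (monotonicity in $p$).} First I would observe that $f^{(p)}(x)\le f^{(p+1)}(x)$ coordinatewise for every $x\in\Rmax^n$ and every $p\in[n-1]$. This is immediate from the order-statistic description of the maxmin-$\omega$ operation underlying \Cref{prop:max-min-function}: for a multiset of $n$ reals the $p$-th smallest element never exceeds the $(p+1)$-th smallest, so $(A\otimes_{p/n}x)_i\le(A\otimes_{(p+1)/n}x)_i$ for all $i$. The inequality remains valid when $x$ has $-\infty$ entries, so it holds as an inequality between the monotone additively homogeneous extensions on $\Rmax^n$.

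\textbf{Step 2 (identifying $\lambda_p$ with the principal eigenvalue).} Next I would recall that $\lambda_p$ is exactly the largest eigenvalue $r=r(f^{(p)})$ furnished by the corollary of Nussbaum's theorem recalled above: that result yields an eigenvector of nonempty support with eigenvalue $r\in\R$, which by \Cref{p:lambdap}(i) is in fact finite, and since by \Cref{p:lambdap}(ii) the finite eigenvalue is unique we get $r=\lambda_p$. In particular $\lambda_p$ realizes the identities in \eqref{e:CW}.

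\textbf{Step 3 (combining).} Let $x$ be a finite eigenvector of $f^{(p)}$, so that $f^{(p)}(x)=\lambda_p+x$. By Step 1, $f^{(p+1)}(x)\ge f^{(p)}(x)=\lambda_p+x$, so $\alpha=\lambda_p$ is admissible in the first identity of \eqref{e:CW} written for $f^{(p+1)}$ (with the nonempty-support vector $x$). Hence $\lambda_p\le r(f^{(p+1)})=\lambda_{p+1}$. Iterating over $p=1,\dots,n-1$ gives $\lambda_1\le\lambda_2\le\cdots\le\lambda_n$. Equivalently, one could invoke the Collatz-Wielandt infimum in \eqref{e:CW}: every $\beta$ with $f^{(p+1)}(x)\le\beta+x$ for some $x\in\R^n$ also satisfies $f^{(p)}(x)\le\beta+x$, so the infimum defining $r(f^{(p)})$ is taken over a superset of the one defining $r(f^{(p+1)})$ and is therefore no larger.

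The argument is short, and the only point requiring care is the bookkeeping of Step~2, namely matching the eigenvalue $\lambda_p$ singled out in the maxmin-$\omega$ setting with the distinguished eigenvalue $r$ in the general Perron-Frobenius statement, and checking that the inequality $f^{(p)}\le f^{(p+1)}$ is read on all of $\Rmax^n$. Neither is a genuine obstacle; the substance is just the monotonicity of the spectral radius already available through \eqref{e:CW}.
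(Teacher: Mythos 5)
Your proof is correct and follows essentially the same route as the paper: the paper's proof likewise observes $f^{(p)}\le f^{(q)}$ for $p\le q$ and then invokes the Collatz--Wielandt identity in \eqref{e:CW} (the infimum form, which you give as your "equivalently" variant). Your version merely spells out the identification of $\lambda_p$ with the distinguished eigenvalue $r$ and offers the sup-form argument as the primary one, but the substance --- monotonicity of the spectral radius via \eqref{e:CW} --- is identical.
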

\begin{proof}
Consider $f^{(p)}$ defined by \Cref{e:fpdef} for $p\in [n]$. Since we have $f^{(p)}(x)\geq f^{(q)}(x)$ for $p\geq q$ and any $x\in \R[n]$, 
we can apply the second property of~\eqref{e:CW} to establish the result.
\end{proof}

\subsection{Saturation graphs}
\label{ss:Sat}

%The case of maxmin-$\omega$ operation can be seen as``intermediate'' between max-plus and min-plus algebras. 
Inspired by the approach to the max-plus spectral theory in \cite{Heid2006}, we can directly generalize the notion of saturation graphs to the case of maxmin-$\omega$ operation.

\begin{defn}
   {\rm Let a matrix $A\in \R^{n\times n}$, a vector $x\in \R^n$ and a threshold $\omega=p/n$ for $p\in [n]$ be given. A saturation graph with respect to $A,\omega$ and $x$, denoted by $\Sat(A,\omega,x)$, is a directed graph whose node set is $[n]$ and  the arcs of which are $(i,j)$ such that $A\otimes_{\omega} x=A(i,j)+x_j$.}
\end{defn}

% Let $x$ be an eigenvector of $A$ associated with a finite eigenvalue and a threshold $\omega$. 
%Then denote by $\Sat(A,\omega,x)$ the graph whose node set is $[n]$ and the arcs of which are $(i,j)$ such that $A\otimes_{\omega} x=A(i,j)+x_j$. 

\begin{prop}
\label{p:satcycle}
Let $A\in\R^{n\times n}$.
\begin{itemize}
\item[(i)] Any eigenvalue $\lambda_p$ for $p=1,\ldots,n$ is a cycle mean of $A$.
\item[(ii)] If $x$ is an eigenvector associated with $\omega$ and $\lambda_p$, then $\Sat(A,\omega,x)$ contains a cycle whose mean weight is equal to $\lambda_p$.
\end{itemize}
\end{prop}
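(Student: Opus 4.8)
The plan is to mimic the classical max-plus argument (as in \cite{Heid2006}): starting from any node, repeatedly follow a saturating arc backwards; since the node set is finite, a cycle must eventually be revisited, and the weight of that cycle will be forced to equal $p\lambda_p$ where $p$ is the cycle length. First I would establish part (ii), from which part (i) follows immediately since an eigenvector always exists by \Cref{p:lambdap}.

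For part (ii), let $x$ be an eigenvector associated with $\omega=p/n$ and $\lambda_p$, so that by the DNF expression \eqref{eq:bomega-max} we have, for each $i\in[n]$,
\begin{equation*}
\lambda_p+x_i = A\otimes_\omega x\,\big|_i = \max_{C\in\mathcal{P}([n],n+1-p)}\min_{j\in C} A(i,j)+x_j.
\end{equation*}
Fix the optimal $C^*$ attaining this maximum for row $i$, and within $C^*$ pick $j$ attaining the inner minimum; then $(i,j)$ is a saturating arc, i.e.\ $\lambda_p+x_i = A(i,j)+x_j$. Crucially this shows \emph{every} node $i$ has at least one outgoing saturating arc (using the CNF form \eqref{eq:bomega-min} one gets an analogous statement with an incoming arc, but one direction suffices). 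Hence define a map $\sigma\colon[n]\to[n]$ by choosing, for each $i$, such a $j=\sigma(i)$ with $(i,\sigma(i))\in\Sat(A,\omega,x)$. Iterating $\sigma$ from any starting node and using finiteness of $[n]$, the sequence $i_0, i_1=\sigma(i_0),i_2=\sigma(i_1),\dots$ must eventually cycle: there are indices $k<\ell$ with $i_k=i_\ell$, giving a cycle $i_k\to i_{k+1}\to\cdots\to i_\ell=i_k$ in $\Sat(A,\omega,x)$ of length $m=\ell-k$.

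Along each arc $(i_t,i_{t+1})$ of this cycle we have $A(i_t,i_{t+1})+x_{i_{t+1}} = \lambda_p + x_{i_t}$. Summing these $m$ equalities around the cycle, the terms $x_{i_t}$ telescope and cancel (each node of the cycle contributes $+x$ once as a head and once as a tail), leaving
\begin{equation*}
\sum_{t=k}^{\ell-1} A(i_t,i_{t+1}) = m\,\lambda_p,
\end{equation*}
so the mean weight of this cycle is exactly $\lambda_p$, proving (ii). For part (i): by \Cref{p:lambdap}(ii) every $\lambda_p$ has an associated (finite) eigenvector $x$, so (ii) applies and exhibits a cycle of $A$ whose mean weight is $\lambda_p$; hence $\lambda_p$ is a cycle mean of $A$.

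The only mildly delicate point — the main thing to get right rather than a genuine obstacle — is the first step: verifying from the max/min (DNF or CNF) representation that every node really does emit a saturating arc, since the maxmin-$\omega$ operation is a nested $\max$-of-$\min$ rather than a single $\max$ as in the max-plus case. Once the outgoing-arc-from-every-node property is in place, the telescoping-around-a-cycle argument is routine and identical to the max-plus setting.
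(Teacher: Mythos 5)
Your proposal is correct and follows essentially the same route as the paper: every node of $\Sat(A,\omega,x)$ has an outgoing arc, so following saturating arcs from any node eventually closes a cycle, and summing the arc identities around that cycle cancels the $x$-terms and identifies its mean weight with $\lambda_p$. The only difference is that you explicitly justify the outgoing-arc property via the DNF form (the paper asserts it without comment, as it is immediate from the fact that $(A\otimes_\omega x)_i$ is always one of the values $A(i,j)+x_j$), so no further changes are needed.
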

\begin{proof}
Let $x$ be an eigenvector of $A$ associated with $\omega$ and a finite eigenvalue $\lambda_p$.
In $\Sat(A,\omega,x)$ every node has an outgoing arc. If we start from a node $i$ and consider a path in $\Sat(A,\omega,x)$ from this node, then it ends up with a cycle $(i_1,i_2,\ldots,i_k)$. For the arcs of this cycle we have:
\begin{equation*}
A(i_1,i_2)+x_{i_2}=\lambda+x_{i_1}, A(i_2,i_3)+x_{i_3}=\lambda+x_{i_2},\ldots, A(i_k,i_1)+x_{i_1}=\lambda+x_{i_k}.     
\end{equation*}
Adding up these identities and cancelling $x_{i_1}+\ldots+x_{i_k}$ from the result we obtain that $\lambda$ is the cycle mean of $(i_1,\ldots,i_k)$. The cycle $(i_1,\ldots,i_k)$ belongs to $\Sat(A,\omega,x)$.
\end{proof}

%We next give an example, for which we will describe all maxmin-$1/2$ eigenvectors associated with eigenvalue $4$. In the process of this description we will show that this is indeed an eigenvalue, and Proposition~\ref{p:lambdap} then implies that this is the only eigenvalue of $A$ for $p=2$. 
\begin{exmp}
{\rm Consider
\begin{equation*}
A=
\begin{bmatrix}
4 & 7 & 2\\
5 & 2 & 5\\
6 & 3 & 1
\end{bmatrix}
\end{equation*}
and take $\lambda=4$ and $\omega=1/2$, so that $p=2$ (considering the second smallest term in $\{A(i,1)+x_1,A(i,2)+x_2,A(i,3)+x_3\}$ for each $i\in [3]$). There are 3 cycles with this cycle mean: $(1)$, $(1,3)$ and $(2,3)$. 

Let us first analyse the second and the third equations of $A\otimes_{\omega} x=\lambda+x$. In the second equation, either $4+x_2=5+x_1$ or $4+x_2=5+x_3$, and in the third equation, either $4+x_3=6+x_1$ or $4+x_3=3+x_2$. 
Let us first assume that $4+x_2=5+x_1$, $4+x_3=6+x_1$ and $x_3=0$ (as we can add any scalar to an eigenvector). Then, $x_1=-2$ and $x_2=-1$. In this case, however, $6+x_1$ is not the second minimal term among $(6+x_1,\; 3+x_2,\; 1+x_2)$, so this case is not possible. 
Now we assume that $4+x_2=5+x_1$, $4+x_3=3+x_2$ and $x_3=0$. Then, $x_2=1$ and $x_1=0$. This implies that also $4+x_2=5+x_3$ in the second equation, meaning that $(2,3)\in\Sat(A,1/2,x)$.\\
Let us now assume $4+x_2=5+x_3$, $4+x_3=6+x_1$ and $x_3=0$. Then, $x_1=-2$ and $x_2=1$. This implies that also $4+x_3=3+x_2$ in the third equation, again implying that 
$(2,3)\in\Sat(A,1/2,x)$. 

We thus obtain that any $1/2$-eigenvector associated with $4$ should have 
$(2,3)\in\Sat(A,1/2,x)$. Let us describe these eigenvectors. Setting $x_3=0$, we immediately obtain $x_2=1$ from the second and third equations. We also have from the same equations that $5+x_1\geq 5+x_3$ and $6+x_1\geq 3+x_2$ (note that the terms $2+x_2$ and $1+x_3$ have to be the smallest in the second and the third equation, respectively). This implies $x\geq 0$. We now turn to the first equation of the eigenproblem. There we have the terms $4+x_1$, $8$ and $2$, of which $4+x_1$ has to be the second smallest, which is true if and only if $2\leq 4+x_1\leq 8$. As already $x_1\geq 0$, we obtain $0\leq x_1\leq 4$.  The eigenspace therefore consists of the vectors
$$
\lambda\otimes
\begin{bmatrix}
[0,\ \  4]\\
1\\
0
\end{bmatrix}
$$
It is also easy to observe that $(1)\in\Sat(A,1/2,x)$ for any of these eigenvectors, but $(1,3)\notin\Sat(A,1/2,x)$ for any of these eigenvectors. 
From this we can conclude the following.} 
\end{exmp}
\begin{rem}
{\rm If $\lambda_{p}$ is a maxmin-$\omega$ eigenvalue of $A$ and $(i_1\ldots,i_k)$ is a cycle with mean weight $\lambda_{p}$, it is in general not true that there exists a maxmin-$\omega$ eigenvector $x$ associated with $\lambda_p$, for which  $(i_1\ldots,i_k)\in\Sat(A,p/n,x)$. This is quite different from the max-plus (respectively, min-plus) algebra in which the saturation graph $\Sat(A,1/n,x)$ (respectively, $\Sat(A,1,x)$) contains every cycle with the maximal (respectively, minimal) cycle mean for any finite eigenvector $x$.}
\end{rem}

\subsection{Zones and DBM representations}
\label{ss:zones}

The main idea of this paper is to over-approximate the sets of $\omega-$eigenvectors $E_p$ by zones, which are relatively simple but expressive and specific convex sets (here meaning convex in the usual sense).

\begin{defn}[\cite{Dill,Mine}]
    A zone $Z\subseteq \R^n$ is the set of all points in $\R^n$ that satisfy the constraints $x_i-x_j\sim_{ij} d_{ij}$ where $\sim_{ij} \hspace*{1ex}\in \{>,\geq\}$ and $d_{ij}\in \R\cup\{-\infty\}$ for $i,j\in [n]$. 
\end{defn}
\begin{defn}[\cite{Mufid2018}]
    Given a zone $Z$ in $\R^n$, the difference-bound matrices (DBM) representation of $Z$ is the pair of $n\times n$ matrices $(D,S)$ where $D(i,j)=d_{ij}\in\R\cup\{-\infty\}$ and
    \[
    S(i,j)=\left\{
    \begin{array}{cc}
        1, &~\text{if $\sim_{ij}\hspace*{0.75ex}=\hspace*{0.75ex}\geq$},  \\
        0, &~\text{if $\sim_{ij}\hspace*{0.75ex}=\hspace*{0.75ex}>$}.
    \end{array}
    \right.
    \]
such that $Z$ is the set satisfying all constraints 
$x_i-x_j\sim_{ij} d_{ij}$.
\end{defn}
In this definition $D\in (\R\cup\{-\infty\})^{n\times n}$ and $S$ is a binary matrix. However, we will always use $\sim_{ij}\hspace*{0.5ex}=\hspace*{0.5ex}\geq$ thus operating with zones that are closed in Euclidean topology. Hence, it is sufficient to represent a zone by matrix $D$. For the rest of the paper, the relation between a zone $Z$ and its DBM representation is expressed by: 
$D\in \dbm{Z}$ and $Z=\zone{D}$. %In what follows, DBM representation will often be abbreviated to DBM.

Note that each $D\in (\R\cup\{-\infty\})^{n\times n}$ with all diagonal entries equal to $0$ corresponds to a unique (possibly empty) zone $\zone{D}$ for which $D$ is its DBM representation. %In terms of max-plus algebra, $\zone{D}$ is the finite part of the max-plus eigenspace of $D$ associated with its $0$ eigenvalue~\cite{MaxEig, Butkovic2010} (which is empty if $0$ is not an eigenvalue of $D$). 
In the expression $D\in \dbm{Z}$ we take into account that the same zone can have possibly infinitely many DBM representations, so that $\dbm{Z}$ denotes the set of such representations. However, there is a canonical DBM representation of a zone, and it can be found by means of max-plus or min-plus linear algebra. We will choose the max-plus representation (although we could have used the equivalent min-plus representation instead). 

\begin{defn}[Kleene star \cite{baccelli92,Butkovic2010,Heid2006} ]
For $A\in\Rmax^{n\times n}$, the (max-plus) Kleene star is defined as the following formal series: 
\begin{equation*}
A^\ast=\bigoplus_{k=0}^{\infty} A^{k}. \end{equation*}
\end{defn}

Note that this series converges and can be truncated as 
$$
A^\ast=\bigoplus_{k=0}^{n-1} A^{ k}
$$
if and only if $\lambda_n(A)\leq 0$. More precisely, the following facts for DBM representations and zones are known.
\begin{prop}[\cite{Mufid2018,MaxEig}]
If $D_1, D_2\in\dbm{Z}$ for a zone $Z$, then $D_1^\ast=D_2^\ast\in\dbm{Z}$. 
\end{prop}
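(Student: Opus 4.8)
The plan is to identify the entries of $D^\ast$ intrinsically, as quantities attached to the set $Z$ and not to the particular matrix $D$. Concretely, I would show that for a nonempty zone $Z=\zone{D}$,
\[
D^\ast(i,j)=\inf\{x_i-x_j:x\in Z\}\quad\text{for all }i,j\in[n],
\]
the infimum being attained whenever finite and equal to $-\infty$ otherwise. Since the right-hand side depends only on $Z$, this gives $D_1^\ast=D_2^\ast$ as soon as $\zone{D_1}=\zone{D_2}=Z$, while the equality $\zone{D^\ast}=\zone{D}$ obtained along the way gives $D^\ast\in\dbm{Z}$. One may assume $Z\neq\emptyset$ (otherwise the Kleene stars need not converge and the statement does not apply): if $D\in\dbm{Z}$ with $Z\neq\emptyset$, then summing the constraints $x_{i_{l-1}}-x_{i_l}\geq D(i_{l-1},i_l)$ around a cycle, whose left-hand side telescopes to $0$, shows every cycle of $D$ has nonpositive weight, i.e. $\lambda_n(D)\leq 0$; by the remark preceding the statement $D^\ast=\bigoplus_{k=0}^{n-1}D^k$ is then well defined, has zero diagonal, and its entries are the heaviest-walk weights in the complete digraph on $[n]$ weighted by $D$.

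The easy half is $D^\ast(i,j)\leq x_i-x_j$ for every $x\in Z$: summing $x_{i_{l-1}}-x_{i_l}\geq D(i_{l-1},i_l)$ along an arbitrary walk $i=i_0\to\cdots\to i_m=j$ telescopes to $x_i-x_j\geq$ (weight of that walk), and maximising over walks yields $x_i-x_j\geq D^\ast(i,j)$. In particular $\zone{D}\subseteq\zone{D^\ast}$, and since $D^\ast\geq D$ entrywise the opposite inclusion is trivial, so $\zone{D^\ast}=\zone{D}$ and $D^\ast\in\dbm{Z}$; this also gives $\inf\{x_i-x_j:x\in Z\}\geq D^\ast(i,j)$.

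The substantive half is that this longest-walk bound is \emph{tight}. If $D^\ast(i,j)=:d$ is finite, I would exhibit a point realising it by setting $x_k:=\max\bigl(D^\ast(k,j),\,D^\ast(k,i)+d\bigr)$ for each $k$: the idempotency $D^\ast\otimes D^\ast=D^\ast$ (equivalently the triangle inequality $D^\ast(k,l)+D^\ast(l,m)\leq D^\ast(k,m)$) makes $x$ satisfy every constraint $x_k-x_l\geq D(k,l)$, while $x_i=\max(d,\,0+d)=d$ and $x_j=\max(0,\,D^\ast(j,i)+d)=0$, the latter because $D^\ast(j,i)+d\leq D^\ast(j,j)=0$; hence $x_i-x_j=d$. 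Coordinates for which this formula yields $-\infty$ (possible, since a zone may have $-\infty$ bounds) are replaced by sufficiently negative finite values, which is feasible because every such coordinate has all of its finite-weight outgoing arcs landing among coordinates of the same kind. If instead $D^\ast(i,j)=-\infty$, I would start from any $\bar x\in Z$ and subtract a parameter $t\to\infty$ from exactly the coordinates in $T:=\{k:D^\ast(i,k)>-\infty\}$, a set containing $i$ but not $j$: this stays in $Z$, because the only constraints whose slack decreases are $x_k-x_l\geq D(k,l)$ with $k\in T$, $l\notin T$, and for those $D(k,l)=-\infty$ (otherwise a finite-weight walk from $i$ to $k$ extended by the arc $k\to l$ would place $l$ in $T$), whereas $x_i-x_j=\bar x_i-\bar x_j-t\to-\infty$. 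Combining the two halves yields the displayed identity, and hence $D_1^\ast=D_2^\ast\in\dbm{Z}$.

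I expect the only real obstacle to be this tightness step, and within it the bookkeeping forced by possible $-\infty$ entries of $D$, which causes the case split and also mildly complicates the finite case; everything else reduces to telescoping difference constraints along walks together with the elementary facts $(D^\ast)^\ast=D^\ast$ and monotonicity of the Kleene star.
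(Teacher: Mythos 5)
The paper does not prove this proposition; it is quoted from the references \cite{Mufid2018,MaxEig}, so there is no in-paper argument to compare against. Your proof is the standard canonicity argument for the shortest/longest-walk closure of a DBM, and it is correct in substance: the telescoping bound $x_i-x_j\geq D^\ast(i,j)$ for $x\in Z$, the witness $x_k=\max\bigl(D^\ast(k,j),D^\ast(k,i)+d\bigr)$ verified via $D^\ast\otimes D^\ast=D^\ast$, and the ``shift the reachable set by $-t$'' argument for the $-\infty$ case are all sound, and together they do identify $D^\ast(i,j)$ with $\inf\{x_i-x_j:x\in Z\}$, which depends only on $Z$. (Excluding $Z=\emptyset$ is also the right call: there the stars need not agree entrywise.) The one place to tighten is the patch for coordinates $k$ with $x_k=-\infty$ in the finite case: choosing ``sufficiently negative finite values'' independently per coordinate could violate the constraints \emph{among} those coordinates, so you should first take a point of the (nonempty, by the same nonpositive-cycle argument) zone of the sub-DBM on that index set and then translate that whole block downward by a common large constant; your observation that all finite-weight arcs out of the block stay inside it is exactly what makes this translation harmless. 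With that sentence added, the argument is complete. As a sanity check on conventions, note that your reading $x_i-x_j\geq D(i,j)$ with the max-plus star giving the tightest lower bounds matches the paper's example (where, incidentally, the displayed canonical constraint $-1\leq x_1-x_2$ appears to be a typo for $1\leq x_1-x_2$, consistent with $D^\ast(1,2)=1$).
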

\begin{prop}[\cite{Mine}]
Let $D\in(\R\cup\{-\infty\})^{n\times n}$ have $0$ diagonal and $Z=\zone{D}$. Then the following are equivalent:
\begin{itemize}
\item[(i)] $Z$ is non-empty; 
\item[(ii)] $\lambda_n(D)=0$.
\item[(iii)] $D^\ast=\bigoplus_{k=1}^{n-1} D^{ k}$; 
\end{itemize}
\end{prop}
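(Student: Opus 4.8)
The plan is to establish the three equivalences through two standard facts: the truncation criterion for the Kleene star (already recalled in this subsection) and the feasibility criterion for systems of difference constraints. I would open with the preliminary observation that, since $D$ has zero diagonal, each loop $(i)$ is a cycle of weight $0$, so $\lambda_n(D)\geq 0$ always; hence (ii) is equivalent to the statement that $D$ has no cycle of positive weight, i.e.\ $\lambda_n(D)\leq 0$. I would also note that $D(i,i)=0$ forces $D^0\leq D$ entrywise, so $\bigoplus_{k=1}^{n-1}D^k=\bigoplus_{k=0}^{n-1}D^k$ and (iii) is exactly the assertion that the Kleene-star series $\bigoplus_{k\geq 0}D^k$ stabilizes at step $n-1$. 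With this, the equivalence (ii)$\Leftrightarrow$(iii) is the fact quoted just above this proposition ($D^\ast$ truncates at $n-1$ iff $\lambda_n(D)\leq 0$); if one prefers a self-contained argument, a positive cycle through $i$ gives $D^k(i,i)\to+\infty$, ruling out any finite truncation, while if all cycles have weight $\leq 0$ then every walk of length $\geq n$ contains a repeated vertex and hence a nonpositive cycle that can be excised without decreasing the weight, so $D^k\leq\bigoplus_{\ell=0}^{n-1}D^\ell$ for all $k$.

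For (i)$\Rightarrow$(ii) I would argue directly: given $x\in Z$ and any cycle $(i_1,\ldots,i_k,i_1)$ of $D$, add the defining inequalities $x_{i_\ell}-x_{i_{\ell+1}}\geq D(i_\ell,i_{\ell+1})$ around the cycle; the left-hand side telescopes to $0$, so the cycle has weight $\leq 0$. Hence $\lambda_n(D)\leq 0$, and combined with the preliminary observation, $\lambda_n(D)=0$.

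For (ii)$\Rightarrow$(i) I would use (iii) to make sense of $D^\ast=\bigoplus_{k=0}^{n-1}D^k$, which has zero diagonal and satisfies $D\otimes D^\ast\leq D^\ast$; reading off $(D\otimes D^\ast)(i,j)\leq D^\ast(i,j)$ entrywise gives $D(i,m)\leq D^\ast(i,j)-D^\ast(m,j)$ for all $i,m$, so the $j$-th column of $D^\ast$ "wants" to be a point of $Z$. The only gap is that this column may have $-\infty$ entries when $D$ does, whereas $Z\subseteq\R^n$. To repair this I would adjoin a dummy source node $0$ with $\widehat D(0,i)=0$ and $\widehat D(i,0)=-\infty$ for all $i\in[n]$: no new cycle is created, so $\lambda_{n+1}(\widehat D)=0$; the entries $\widehat D^\ast(0,i)=\max_{i'\in[n]}D^\ast(i',i)$ are all finite (bounded below by $D^\ast(i,i)=0$), and the triangle inequality $\widehat D^\ast(0,m)\geq\widehat D^\ast(0,i)+D(i,m)$ shows that the finite vector $x$ with $x_i:=-\widehat D^\ast(0,i)$ satisfies $x_i-x_m\geq D(i,m)$ for all $i,m$, hence $x\in Z$.

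The only genuinely delicate point is this finiteness bookkeeping in (ii)$\Rightarrow$(i); the rest is routine. An alternative would be simply to invoke \cite{Mine} or the shortest-path characterization of difference-constraint feasibility, but the argument above is self-contained given the facts already recalled here. It would be worth flagging explicitly that we use $\lambda_n(D)\geq 0$ (from the zero diagonal) precisely to upgrade $\lambda_n(D)\leq 0$ to the stated equality in (ii).
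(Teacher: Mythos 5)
Your argument is correct, but note that the paper does not prove this proposition at all: it is stated as a known fact and attributed to the reference on difference-bound matrices, so your blind attempt is being compared against a citation rather than an in-text argument. What you supply is a complete, self-contained proof, and each piece holds up. The preliminary observation that the zero diagonal forces $\lambda_n(D)\geq 0$ (so that (ii) really means ``no positive cycle'') is exactly the right normalization, and it also justifies replacing $\bigoplus_{k=1}^{n-1}D^{k}$ by $\bigoplus_{k=0}^{n-1}D^{k}$ via $D^{0}\leq D$, which reduces (ii)$\Leftrightarrow$(iii) to the truncation criterion the paper recalls just above the statement. The telescoping of the constraints $x_{i_\ell}-x_{i_{\ell+1}}\geq D(i_\ell,i_{\ell+1})$ around a cycle gives (i)$\Rightarrow$(ii) cleanly. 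For (ii)$\Rightarrow$(i) you correctly identify the one delicate point --- a column of $D^\ast$ may contain $-\infty$ entries and so need not lie in $Z\subseteq\R^n$ --- and your dummy-source repair is sound: adjoining node $0$ with arcs of weight $0$ into $[n]$ and no arcs back creates no new cycles, the quantities $\widehat D^\ast(0,i)=\max_{i'}D^\ast(i',i)$ are finite because they are bounded below by $D^\ast(i,i)=0$ and above by a maximum over finitely many walks, and the triangle inequality $\widehat D^\ast(0,m)\geq \widehat D^\ast(0,i)+D(i,m)$ turns $x_i:=-\widehat D^\ast(0,i)$ into a point of $Z$. This is the standard longest-path potential construction from the difference-constraints literature; what your write-up buys over the paper's approach is that the reader need not chase the external reference, at the cost of about a page of routine but carefully checked bookkeeping.
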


Let us now formally introduce matrices which are closely related to zones, in max-plus algebra.

\begin{defn}[Strongly definite matrices~\cite{Butkovic2010}]
Let $D\in(\R\cup\{-\infty\})^{n\times n}$ have $0$ diagonal and $\lambda_n(D)=0$. Then, $D$ is called strongly definite. 
\end{defn}

The relation is the following~\cite{Butkovic2010, MaxEig}: If $D$ is a strongly definite matrix, then $\zone{D}$ is the same as the finite part of the max-plus eigenspace of $D$ associated with the eigenvalue $\lambda_n(D)$.  

The above propositions show that the tightest bounds for a zone are given by a uniquely defined Kleene star, which is the canonical DBM representation of this zone. These bounds are consistent if and only if this Kleene star or any DBM representation of the zone do not have any cycle with a positive cycle mean. When a zone $Z$ is represented by the inequalities derived from its canonical DBM, we will say that $Z$ is in canonical form.

We will also use that two zones can be conveniently intersected by means of taking the tropical sum of their DBM representations.
\begin{prop}[\cite{Mine,Mufid2018}]
\label{zones-meet}
    Let two zones $Z_1,Z_2$ in $\R^n$ with the DBM representations $D_1$ and $D_2$ be given. Then $D_1\oplus D_2\in \dbm{Z_1\cap Z_2}$.
\end{prop}

\if{
The canonical form of a zone $Z$, denoted as $Z^\ast$, is a zone with the tightest possible bounds \cite{here}.
\begin{prop}\cite{here}
    Given a zone $Z$ in $\R^n$ and its DBM representation $D$, the DBM representation for its canonical form is
    \begin{equation}
        \label{eq:DBM-canonical}
        \DBM{Z^\ast}=\bigoplus_{k=1}^{+\infty} D^{\otimes k}.
    \end{equation}
\end{prop}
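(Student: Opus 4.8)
The plan is to show that $D^\ast:=\bigoplus_{k=1}^{\infty}D^{\otimes k}$ is precisely the DBM whose entries are the tightest (that is, largest) valid lower bounds $d_{ij}^\ast=\inf_{x\in Z}(x_i-x_j)$ on the differences over $Z=\zone{D}$, and that $\zone{D^\ast}=Z$. Throughout I assume $Z\neq\emptyset$, which by the earlier equivalence is the same as $\lambda_n(D)=0$; this guarantees that the series converges and, since $D$ has zero diagonal and no closed walk of positive weight, that $\bigoplus_{k=1}^{\infty}D^{\otimes k}=\bigoplus_{k=0}^{\infty}D^{\otimes k}=D^\ast$ with $D^\ast(j,j)=0$ for every $j$. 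The first ingredient I would record is the combinatorial reading of the Kleene star: $D^\ast(i,j)$ is the maximal weight of a walk from $i$ to $j$ in the graph weighted by $D$.

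From this the set equality $\zone{D^\ast}=Z$ follows in two steps. Since $D^\ast\geq D$ entrywise, the constraints defining $\zone{D^\ast}$ are at least as tight, so $\zone{D^\ast}\subseteq Z$. Conversely, for any $x\in Z$ and any walk $i=i_0,i_1,\ldots,i_m=j$, summing the constraints $x_{i_l}-x_{i_{l+1}}\geq D(i_l,i_{l+1})$ telescopes to $x_i-x_j\geq\sum_l D(i_l,i_{l+1})$; maximising over walks gives $x_i-x_j\geq D^\ast(i,j)$, so $x\in\zone{D^\ast}$ and hence $Z\subseteq\zone{D^\ast}$.

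Next I would establish tightness. The telescoping bound already yields $\inf_{x\in Z}(x_i-x_j)\geq D^\ast(i,j)$. For the reverse inequality I would exhibit a point of $Z$ attaining the bound, namely the $j$-th column $x:=D^\ast(\cdot,j)$. This column lies in $Z$ because $D^\ast$ is sub-invariant: $D\otimes D^\ast=\bigoplus_{k\geq 2}D^{\otimes k}\leq D^\ast$, so for all $i,k$ one has $D(i,k)+D^\ast(k,j)\leq D^\ast(i,j)$, i.e.\ $x_i-x_k\geq D(i,k)$. Because $D^\ast(j,j)=0$, this point satisfies $x_i-x_j=D^\ast(i,j)$, giving $\inf_{x\in Z}(x_i-x_j)\leq D^\ast(i,j)$. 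Combining the two inequalities, $\inf_{x\in Z}(x_i-x_j)=D^\ast(i,j)$, which is exactly the assertion that $D^\ast$ records the tightest bounds and is therefore the canonical representation of $Z^\ast=Z$. Uniqueness of this canonical form, if needed, follows from the earlier fact that $D_1^\ast=D_2^\ast$ whenever $D_1,D_2\in\dbm{Z}$.

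The main obstacle is the attainment step: verifying that the candidate extremal point $D^\ast(\cdot,j)$ genuinely lies in $Z$ and that $D^\ast(j,j)=0$, both of which hinge on $\lambda_n(D)=0$. The care required is to dispatch the empty-zone case $\lambda_n(D)>0$ separately (the Kleene star diverges and the canonical form is undefined there) and to keep the handling of possible $-\infty$ entries of $D$ consistent inside the sub-invariance computation. The set equality and the lower bound are routine telescoping; the substantive part is the column construction, which can alternatively be read off from the stated identification of $\zone{D}$ with the finite part of the max-plus eigenspace of $D$, whose fundamental eigenvectors are exactly the columns of $D^\ast$.
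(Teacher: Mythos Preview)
The paper does not supply its own proof of this proposition: the statement carries an external citation (the placeholder \texttt{\textbackslash cite\{here\}}) and in fact sits inside an \texttt{\textbackslash if\{\ldots\}\textbackslash fi} block that is excluded from the compiled document. The surrounding material that survives in the final paper (the uniqueness of $D^\ast$ for all DBM representations of the same zone, and the equivalence of non-emptiness with $\lambda_n(D)=0$) is likewise stated with citations and no proof. So there is nothing in the paper to compare your argument against.

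That said, your proof is correct and is the standard one. The two inclusions for $\zone{D^\ast}=Z$ are exactly the telescoping argument one expects, and your attainment step via the $j$-th column of $D^\ast$ is the right witness: sub-invariance $D\otimes D^\ast\leq D^\ast$ gives membership in $Z$, and $D^\ast(j,j)=0$ (which relies on $\lambda_n(D)=0$) gives the exact value $x_i-x_j=D^\ast(i,j)$. Your caveats about the empty-zone case and about $-\infty$ entries are appropriate; nothing further is needed.
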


\begin{prop}
    Given a zone $Z$ in $\R^n$ and its DBM representation $D$, $Z$ is not empty iff the RHS of \eqref{eq:DBM-canonical} is convergent, i.e,
    \[\bigoplus_{k=1}^{+\infty} D^{\otimes k} = \bigoplus_{k=1}^{n} D^{\otimes k}.\]
\end{prop}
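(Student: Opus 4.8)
The plan is to reduce the stated equivalence to two facts already on record in the excerpt: the Proposition attributed to \cite{Mine}, which gives $\zone{D}\neq\emptyset \iff \lambda_n(D)=0$ for $D$ with zero diagonal, and the truncation property of the Kleene star recalled just before it, namely that $D^\ast=\bigoplus_{k=0}^{n-1}D^{k}$ converges exactly when $\lambda_n(D)\le 0$. The whole argument then becomes a matter of translating the phrase ``the series $\bigoplus_{k=1}^{\infty}D^{k}$ converges and truncates at $n$'' into the eigenvalue language, after which the two recorded facts close the loop.

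First I would record the standing observation that $\lambda_n(D)\ge 0$ for every $D$ with zero diagonal: each node $i$ carries a loop $(i,i)$ of weight $D(i,i)=0$, so the maximum cycle mean is at least $0$. Consequently $\lambda_n(D)\le 0$ is equivalent to $\lambda_n(D)=0$ under the standing zero-diagonal hypothesis, and I will use the two conditions interchangeably.

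Next I would establish the convergence step. By the truncation fact, the series $\bigoplus_{k=0}^{\infty}D^{k}$ has all entries in $\R\cup\{-\infty\}$ and equals $\bigoplus_{k=0}^{n-1}D^{k}$ precisely when $\lambda_n(D)\le 0$; when $\lambda_n(D)>0$ some diagonal entry $D^{k}(i,i)$ grows without bound along a positive cycle, so the series diverges and no truncation identity can hold. Since $D$ has zero diagonal and $\lambda_n(D)=0$, every closed walk has weight $\le 0$ while repeating the loop at $i$ realises weight $0$, so $D^{k}(i,i)=0$ for all $k\ge 1$; hence the identity term $D^{0}$ contributes nothing and $\bigoplus_{k=1}^{n-1}D^{k}=\bigoplus_{k=0}^{n-1}D^{k}=D^\ast$, while adjoining $D^{n}$ likewise changes nothing. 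This reconciles the index ranges and shows that, under zero diagonal, the identity $\bigoplus_{k=1}^{\infty}D^{k}=\bigoplus_{k=1}^{n}D^{k}$ holds if and only if $\lambda_n(D)=0$.

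Finally I would chain the equivalences: $\zone{D}\neq\emptyset$ $\iff$ $\lambda_n(D)=0$ (the \cite{Mine} Proposition) $\iff$ the series $\bigoplus_{k=1}^{\infty}D^{k}$ converges and equals $\bigoplus_{k=1}^{n}D^{k}$ (the previous step). The main obstacle I anticipate is not conceptual but bookkeeping: pinning down the precise meaning of ``convergent'' for a tropical matrix series (finite stabilisation versus blow-up to $+\infty$ on cyclic diagonal entries) and carefully matching the range $k=1,\dots,n$ in the statement against the $k=0,\dots,n-1$ truncation of the Kleene star, which is exactly where the zero-diagonal hypothesis does the work.
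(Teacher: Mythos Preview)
Your argument is correct. The paper itself gives no proof of this proposition: in the draft it is stated without proof (and in fact sits inside an excised \texttt{\textbackslash if\{\ldots\}\textbackslash fi} block), and in the final text it is superseded by the proposition cited from \cite{Mine}, which is likewise quoted without proof. Your reduction to the two recorded facts---the equivalence $\zone{D}\neq\emptyset\iff\lambda_n(D)=0$ from \cite{Mine} and the Kleene-star truncation criterion $\lambda_n(D)\le 0$---together with the zero-diagonal observation $\lambda_n(D)\ge 0$, is exactly the intended derivation. The index bookkeeping could be streamlined by noting that $D\ge I$ (zero diagonal) forces $D^{k+1}\ge D^k$ for all $k$, so $\bigoplus_{k=0}^{m}D^k=D^m$ and the ranges $\{0,\dots,n-1\}$, $\{1,\dots,n-1\}$, $\{1,\dots,n\}$ all collapse once the sequence stabilises; but your diagonal-entry argument reaches the same conclusion.
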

}\fi

\begin{exmp}
    Suppose we have a zone in $\R^3$
    \[Z=\{[x_1~x_2~x_3]^\top\mid -2\leq x_1-x_2 \leq 4, 2\leq x_1-x_3\leq 3,0\leq x_2-x_3\leq 1\}.\]
    The corresponding DBM representation for $Z$ is
    \[
    D=\begin{bmatrix}
        0&-2&2\\
        -4&0&0\\
        -3&-1&0
    \end{bmatrix}.
    \]
   However $D$ is not the canonical DBM for $Z$ since
    \[
    D^\ast=D\oplus D^{\otimes 2} = \begin{bmatrix}
        0&1&2\\
        -3&0&0\\
        -3&-1&0
    \end{bmatrix}\neq D.
    \]
   $D^*$ yields the tightest inequalities for $Z$: 
    \[Z =\{[x_1~x_2~x_3]^\top\mid -1\leq x_1-x_2 \leq 3, 2\leq x_1-x_3\leq 3,0\leq x_2-x_3\leq 1\}.\]
\end{exmp}

%Hence, the canonical form of a zone can be done by computing the Kleene star of its DBM representation. 

\section{Over-approximation of maxmin-$\omega$ eigenspace}
\label{s:main}

\subsection{A sequence of over-approximations}
\label{ss:sequence}
In this section we will define a sequence of over-approximations of  maxmin-$\omega$ eigenspaces by zones. Let us formally denote by
\begin{equation}
\label{eq:image}
    \mathtt{Im}(A,D,\omega)=\{A\otimes_{\omega} x\mid x\in\zone{D}\}
\end{equation}
the image of $\zone{D}$ under $A\otimes_{\omega}$.
Instead of characterizing image~\eqref{eq:image}, our first aim is to find a zone $Z$ such that $\mathtt{Im}(A,D,\omega)\subseteq Z$: an over-approximation for image \eqref{eq:image}.

Given a strongly definite $D\in(\R\cup\{-\infty\})^{n\times n}$ and two row-vectors $a=[a_1~\cdots~a_n]$ and $b=[b_1~\cdots~b_n]$, we first show how to find the lower bound for
%\begin{equation}
%\label{eq:ax-bx}
$a\otimes_{\omega} x - b\otimes_{\omega} x$
%\end{equation}
provided that $x\in\zone{D}$.

\begin{prop}
\label{prop:ax-bx-lower-min-max}
    For any strongly definite  $D\in(\R\cup\{-\infty\})^{n\times n}$, two row-vectors $a=[a_1~\cdots~a_n]$, $b=[b_1~\cdots~b_n]$, $\omega=p/n$ and $x\in\zone{D}$, we have
\begin{equation}
\label{eq:ax-bx-lower-min-max}
    a\otimes_{\omega} x - b\otimes_{\omega} x \geq \bigoplus_{(S_1,S_2)\in \mathcal{S}_1\times \mathcal{S}_2}\hspace*{-4ex}{}^\prime\hspace{2ex}\left\{\bigoplus_{(i,j)\in S_1\times S_2} \left\{a_i-b_j+D(i,j)\right\}\right\}
\end{equation}
where $\mathcal{S}_1$ and $\mathcal{S}_2$ are the set of all $p$-subsets and $(n+1-p)$-subsets of $[n]$, respectively.
\end{prop}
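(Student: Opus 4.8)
The plan is to reduce the inequality to the complementary normal forms of the maxmin-$\omega$ operation provided by \Cref{prop:max-min-function}, and then to use the elementary fact that a difference of a minimum and a maximum is again a minimum. First I would write the left-hand term in conjunctive normal form and the right-hand term in disjunctive normal form, namely
\[
a\otimes_{\omega}x=\min_{S_1\in\mathcal{S}_1}\ \max_{i\in S_1}(a_i+x_i),\qquad
b\otimes_{\omega}x=\max_{S_2\in\mathcal{S}_2}\ \min_{j\in S_2}(b_j+x_j).
\]
It is precisely this asymmetric choice (CNF for $a\otimes_{\omega}x$ with the $p$-subsets $\mathcal{S}_1$, DNF for $b\otimes_{\omega}x$ with the $(n+1-p)$-subsets $\mathcal{S}_2$) that produces the index families appearing in \eqref{eq:ax-bx-lower-min-max}.

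Next I would merge the two expressions. Since $-\max_{S_2}\min_{j\in S_2}(b_j+x_j)=\min_{S_2}\bigl(-\min_{j\in S_2}(b_j+x_j)\bigr)$, both outer operations are minima, so
\[
a\otimes_{\omega}x-b\otimes_{\omega}x=\min_{(S_1,S_2)\in\mathcal{S}_1\times\mathcal{S}_2}\Bigl(\max_{i\in S_1}(a_i+x_i)-\min_{j\in S_2}(b_j+x_j)\Bigr).
\]
Comparing this with the right-hand side of \eqref{eq:ax-bx-lower-min-max}, which equals $\min_{(S_1,S_2)}\max_{(i,j)\in S_1\times S_2}(a_i-b_j+D(i,j))$, it suffices to prove, for each fixed pair $(S_1,S_2)$ and each $(i,j)\in S_1\times S_2$, that $\max_{i'\in S_1}(a_{i'}+x_{i'})-\min_{j'\in S_2}(b_{j'}+x_{j'})\ge a_i-b_j+D(i,j)$.

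For this inner bound I would use that $\max_{i'\in S_1}(a_{i'}+x_{i'})\ge a_i+x_i$ and $\min_{j'\in S_2}(b_{j'}+x_{j'})\le b_j+x_j$, which together give $\max_{i'\in S_1}(a_{i'}+x_{i'})-\min_{j'\in S_2}(b_{j'}+x_{j'})\ge a_i-b_j+(x_i-x_j)$, and then invoke $x\in\zone{D}$, whose defining constraints read $x_i-x_j\ge D(i,j)$; this yields the desired inequality. Taking first the maximum over all $(i,j)\in S_1\times S_2$ and then the minimum over all $(S_1,S_2)\in\mathcal{S}_1\times\mathcal{S}_2$ completes the argument.

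I do not expect a genuine obstacle: strong definiteness of $D$ is only used so that $\zone{D}$ is non-empty and the statement is non-vacuous, while the one real idea — converting the difference of a minimum and a maximum into a single minimum over pairs $(S_1,S_2)$, which is exactly what forces the CNF/DNF split above — is short. The only points requiring a little care are the edge cases $p=1$ and $p=n$, where one of $\mathcal{S}_1,\mathcal{S}_2$ consists of singletons and the other of the whole set $[n]$, but the displayed normal forms remain valid there.
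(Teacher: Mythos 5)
Your proposal is correct and follows essentially the same route as the paper's proof: express $a\otimes_\omega x$ in CNF and $b\otimes_\omega x$ in DNF, merge the difference into a single minimum over pairs $(S_1,S_2)$ with an inner maximum over $(i,j)\in S_1\times S_2$, and then apply the zone constraints $x_i-x_j\geq D(i,j)$ together with monotonicity. The only cosmetic difference is that you bound the inner term pairwise before taking the maximum, whereas the paper rewrites it exactly as $\max_{(i,j)\in S_1\times S_2}(a_i-b_j+x_i-x_j)$ first; these are the same inequality.
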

\begin{proof}
    Recall that due to \eqref{eq:bomega-min} and \eqref{eq:bomega-max}, we can express
    \[
    a\otimes_{\frac{p}{n}} x=\bigoplus_{S_1\in \mathcal{S}_1}\hspace*{-1ex}{}^\prime\left\{ \bigoplus_{i\in S_1} a_i+x_i\right\} ~\text{and}~
    b\otimes_{\frac{p}{n}} x=\bigoplus_{S_2\in \mathcal{S}_2}\left\{ \bigoplus_{j\in S_2}\hspace*{-0.5ex}{}^\prime ~b_j+x_j\right\}.
    \]
    Hence, we have 
    \begin{align*}
         a\otimes_{\frac{p}{n}} x - b\otimes_{\frac{p}{n}} x&=\bigoplus_{S_1\in \mathcal{S}_1}\hspace*{-1ex}{}^\prime\left\{ \bigoplus_{i\in S_1} a_i+x_i\right\}-\bigoplus_{S_2\in \mathcal{S}_2}\left\{ \bigoplus_{j\in S_2}\hspace*{-0.5ex}{}^\prime ~b_j+x_j\right\},\\
         &=\bigoplus_{S_1\in \mathcal{S}_1}\hspace*{-1ex}{}^\prime\left\{ \bigoplus_{i\in S_1} a_i+x_i\right\}+\bigoplus_{S_2\in \mathcal{S}_2}\hspace*{-1ex}{}^\prime\left\{ \bigoplus_{j\in S_2} -b_j-x_j\right\},\\
         &=\bigoplus_{(S_1,S_2)\in \mathcal{S}_1\times \mathcal{S}_2}\hspace*{-4ex}{}^\prime\hspace{2ex}\left\{\bigoplus_{(i,j)\in S_1\times S_2} \left\{a_i-b_j+x_i-x_j\right\}\right\},\\
         &\geq \bigoplus_{(S_1,S_2)\in \mathcal{S}_1\times \mathcal{S}_2}\hspace*{-4ex}{}^\prime\hspace{2ex}\left\{\bigoplus_{(i,j)\in S_1\times S_2} \left\{a_i-b_j+D(i,j)\right\}\right\}.
    \end{align*}
    The last assertion is due to the fact that $x_i-x_j\geq D(i,j)$ for each $i,j\in [n]$ and the monotonicity of all involved arithmetical operations.
\end{proof}

%\[
%MinMaxBlock(A,m,n)=-MaxMinBlock(-A,n,m)
%\]
% \begin{prop}
%     Given a DBM $D$, two row-vectors $a=[a_1~\cdots~a_n]$, $b=[b_1~\cdots~b_n]$, and $\omega=p/n$ we have
% \begin{equation}
% \label{eq:ax-bx-lower-max-min}
%     a\otimes_{\omega} x - b\otimes_{\omega} x \geq \bigoplus_{(S_1,S_2)\in \mathcal{S}_1\times \mathcal{S}_2}\left\{\bigoplus_{(i,j)\in S_1\times S_2}\hspace*{-3ex}{}^\prime\left\{a_i-b_j+D(i,j)\right\}\right\}
% \end{equation}
% where $\mathcal{S}_1$ and $\mathcal{S}_2$ are the set of all $(n+1-p)$-subsets and $p$-subsets of $\{1,2,\ldots,n\}$, respectively.
% \end{prop}
% The upper bound for \eqref{eq:ax-bx} can also be obtained similarly
% \begin{prop}
%     Given a DBM $D$, two row-vectors $a=[a_1~\cdots~a_n]$, $b=[b_1~\cdots~b_n]$, and $\omega=p/n$ we have
% \begin{equation}
%     a\otimes_{\omega} x - b\otimes_{\omega} x \leq \bigoplus_{(S_1,S_2)\in \mathcal{S}_1\times \mathcal{S}_2}\hspace*{-4ex}{}^\prime\hspace{2ex}\left\{\bigoplus_{(i,j)\in S_1\times S_2} \left\{a_i-b_j-D(j,i)\right\}\right\}
% \end{equation}
% where $\mathcal{S}_1$ and $\mathcal{S}_2$ are the set of all $p$-subsets and $(n+1-p)$-subsets of $\{1,2,\ldots,n\}$, respectively.
% \end{prop}

Consider a matrix 
\begin{equation}
\label{eq:L}
\left[
\begin{array}{c|c|c|c}
     a^\top&a^\top&\cdots&a^\top
\end{array}
\right]
-
\left[
\begin{array}{c}
     b  \\
     b\\
     \vdots\\
     b
\end{array}
\right]+D,
\end{equation}
where 
\[
\left[\begin{array}{c|c|c|c}
     a^\top&a^\top&\cdots&a^\top
\end{array}
\right],\quad 
\left[\begin{array}{c}
     b  \\
     b\\
     \vdots\\
     b
\end{array}
\right]
\]
denote, respectively, the matrix whose every column is equal to $a^{\top}$ and the matrix whose every row is equal to $b$.

Notice that the maximum operation in \eqref{eq:ax-bx-lower-min-max}, which is
\[
\bigoplus_{(i,j)\in S_1\times S_2} \left\{a_i-b_j+D(i,j)\right\},
\]
gives us the greatest element of the submatrix of~\eqref{eq:L} of size $p\times (n+1-p)$ extracted from the rows with indices in $S_1$ and the columns with indices in $S_2$. As a consequence, the lower bound for \eqref{eq:ax-bx-lower-min-max} can be expressed as
\begin{equation}
\label{eq:min-max-block}
    \mathtt{MinMaxValue}(D,a,b\mid p,n+1-p):=\min_{(S_1,S_2)\in \mathcal{S}_1\times\mathcal{S}_2}\left(\max_{i\in S_1,j\in S_2} D(i,j)+a_i-b_j\right), 
\end{equation}
where $\mathcal{S}_1$ denotes the set of all subsets of $[n]$ with $p$ elements and $\mathcal{S}_2$ denotes the sets of all subsets of $[n]$ with $n+1-p$ elements. We will also use the notation 
\begin{equation}
\label{eq:min-max-block-v2}
    \mathtt{MinMaxValue}(F\mid p,n+1-p):=\min_{(S_1,S_2)\in \mathcal{S}_1\times\mathcal{S}_2}\left(\max_{i\in S_1,j\in S_2} F(i,j)\right). 
\end{equation}

\begin{prop}
\label{p:minmaxvalue}
    For two matrices $A,B\in (\R\cup\{-\infty\})^{n\times n}$, the following conditions hold
    \begin{itemize}
        \item[(i)] If $A$ is a diagonal matrix (in a max-plus algebraic sense), then 
        \[\mathtt{MinMaxValue}(A\mid p,n+1-p)=\min\{A(1,1),\ldots,A(n,n)\}\]
        for each $p\in [n]$.
        \item[(ii)]  If $A\geq B$ then
        \[
        \mathtt{MinMaxValue}(A\mid p,n+1-p)\geq \mathtt{MinMaxValue}(B\mid p,n+1-p)
        \]
        for each $p\in [n]$.
    \end{itemize}
\end{prop}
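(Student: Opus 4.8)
The plan is to treat the two parts separately. Part (ii) is a pure monotonicity statement, and part (i) rests on a pigeonhole argument exploiting that the block sizes $p$ and $n+1-p$ add up to $n+1$.

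For part (ii) I would argue straight from the definition~\eqref{eq:min-max-block-v2}. Fix any pair $(S_1,S_2)\in\mathcal{S}_1\times\mathcal{S}_2$. Since $A(i,j)\geq B(i,j)$ entrywise, taking the maximum over $i\in S_1$ and $j\in S_2$ preserves this, so $\max_{i\in S_1,j\in S_2}A(i,j)\geq\max_{i\in S_1,j\in S_2}B(i,j)$; as this holds for every pair, taking the minimum over all $(S_1,S_2)$ on both sides gives the desired inequality. The only caveat is that the $\max$ and $\min$ operations remain monotone in the presence of $-\infty$ entries, which is harmless here.

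For part (i) the key observation is that for any $S_1\in\mathcal{S}_1$ and $S_2\in\mathcal{S}_2$ one has $|S_1|+|S_2|=p+(n+1-p)=n+1$, hence $|S_1\cap S_2|\geq 1$, and moreover $|S_1\cap S_2|=1$ is attainable. Since $A$ is diagonal in the max-plus sense (so $A(i,j)=-\infty$ whenever $i\neq j$), for each fixed pair we get
\[
\max_{i\in S_1,\ j\in S_2}A(i,j)=\max_{k\in S_1\cap S_2}A(k,k),
\]
which is well defined precisely because $S_1\cap S_2\neq\emptyset$. I would then prove the two inequalities. For ``$\geq$'': for every $(S_1,S_2)$, picking any $k\in S_1\cap S_2$ gives $\max_{k'\in S_1\cap S_2}A(k',k')\geq A(k,k)\geq\min_\ell A(\ell,\ell)$, and minimising over $(S_1,S_2)$ yields $\mathtt{MinMaxValue}(A\mid p,n+1-p)\geq\min_\ell A(\ell,\ell)$. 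For ``$\leq$'': let $k^\ast$ attain $\min_\ell A(\ell,\ell)$ and choose $S_1$ to consist of $k^\ast$ together with $p-1$ further indices and $S_2$ to consist of $k^\ast$ together with the remaining $n-p$ indices, so that $S_1\cap S_2=\{k^\ast\}$ and $S_1\cup S_2=[n]$ (possible since $1\leq p\leq n$); then the inner maximum equals $A(k^\ast,k^\ast)=\min_\ell A(\ell,\ell)$, so the outer minimum is at most this value. Combining the two bounds gives the stated equality.

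I do not expect a real obstacle here; the only things to watch are the bookkeeping with $-\infty$ diagonal entries (all the displayed identities and inequalities survive under the conventions $\max\{-\infty,c\}=c$ and $-\infty\geq-\infty$), and the verification that a pair $(S_1,S_2)$ with $|S_1\cap S_2|=1$ and $S_1\cup S_2=[n]$ always exists, which reduces to the counting $(p-1)+(n-p)=n-1$ together with $p-1\geq 0$ and $n-p\geq 0$.
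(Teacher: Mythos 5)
Your proof is correct and follows essentially the same route as the paper: part (ii) is the same one-line monotonicity observation, and for part (i) the paper likewise argues that every $p\times(n+1-p)$ submatrix must contain a diagonal entry (giving the lower bound) and that for each $i$ one can choose $S_1\ni i$ and $S_2=([n]\setminus S_1)\cup\{i\}$ so that $A(i,i)$ is the only possibly finite entry of the submatrix (giving the upper bound at the minimizing index). Your formulation via $|S_1|+|S_2|=n+1$, hence $S_1\cap S_2\neq\emptyset$, is just a cleaner phrasing of the same pigeonhole step.
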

\begin{proof}
(i): As $A$ is a max-plus diagonal matrix, all off-diagonal entries of $A$ are $-\infty$. To prove the claim it is sufficient to show that in any $p\times (n+1-p)$ submatrix of $A$ there is a diagonal entry, and that there exists a $p\times (n+1-p)$ submatrix of $A$, in which $A(i,i)$ is the only possibly finite entry (the rest being $-\infty$). For this, take any $p$ rows and consider the $p\times (n-p)$ submatrix where the set of column indices is the complement of the row indices (otherwise, it already contains a diagonal entry). Adding any column to it will result in a matrix containing a diagonal entry.  Furthermore, if we take $p$ rows that contain a given $i$, then to form the $p\times (n-p+1)$ submatrix that contains $A(i,i)$ we should add the $i$th column, and then in this column as well as in the resulting submatrix only $A(i,i)$ is possibly finite and the rest of the entries are equal to $-\infty$, as required.\\
(ii): obvious from the monotonicity of the arithmetical operations involved.    
\end{proof}

\begin{rem}
\label{r:diag}
Observe that if in \Cref{prop:ax-bx-lower-min-max} we take $a=b$, then we obtain that the right-hand side of~\eqref{eq:ax-bx-lower-min-max} is non-positive. Furthermore, it is actually equal to $0$ since any $p\times (n-p+1)$ submatrix of an $n\times n$ matrix contains a diagonal entry and $D(i,i)=0$.   
\end{rem}

Let us now for each $p$ define a sequence of matrices $\{D_p^k\}_{k\geq 0}$, where $D_p^0=I_n$ is an $n\times n$ max-plus identity matrix. The rest of the matrices are inductively defined by 
\begin{equation}
\label{e:dpk}
D_p^{k+1}(i,j):=\mathtt{MinMaxValue}(D_p^k,A_i,A_j\mid p,n+1-p)\quad \forall i,j\in [n] 
\end{equation}
where $A_i$ and $A_j$ denote the $i$th and $j$th rows of $A$.
We note that these are matrices with $0$ diagonal due to \Cref{r:diag}.

Denote $Z_p^k:=\zone{D_p^k}$ for all $p$ and $k$. Indeed, $Z_p^0=\zone{I_n}=\R[n]$. Due to \Cref{prop:ax-bx-lower-min-max} and \Cref{e:dpk} we obtain that
\begin{equation}
\label{e:image-incl}
\mathtt{Im}(A,D_{p}^k,p/n)\subseteq Z_p^{k+1},\quad \forall p\in[n], k\geq 0.
\end{equation}
In particular, zone $Z_p^{k+1}$ is non-empty showing, together with \Cref{r:diag}, that the following fact can be proved by a simple induction:
\begin{prop}
Matrices $D_p^k$ are strongly definite for any $p\in[n]$ and $k\geq 0$.
\end{prop}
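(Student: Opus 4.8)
The plan is a straightforward induction on $k$, built around the two facts that the text has already isolated: that the matrices $D_p^k$ have zero diagonal (by \Cref{r:diag}), and that a DBM $D$ with zero diagonal defines a non-empty zone $\zone{D}$ if and only if $\lambda_n(D)=0$. Since ``strongly definite'' means precisely ``zero diagonal together with $\lambda_n(D)=0$'', it is enough to carry along the induction the statement that $D_p^k$ is strongly definite, equivalently that $Z_p^k=\zone{D_p^k}$ is non-empty.

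For the base case $k=0$, the matrix $D_p^0=I_n$ is the max-plus identity: its diagonal is zero and all off-diagonal entries are $-\infty$, so it admits no cycle of positive weight, $\lambda_n(I_n)=0$, and $Z_p^0=\R^n\neq\emptyset$; hence $D_p^0$ is strongly definite. For the inductive step I would assume $D_p^k$ strongly definite, so that $Z_p^k\neq\emptyset$ by the non-emptiness criterion. Two things then need checking for $D_p^{k+1}$. First, it has zero diagonal: the entry $D_p^{k+1}(i,i)=\mathtt{MinMaxValue}(D_p^k,A_i,A_i\mid p,n+1-p)$ is exactly the right-hand side of~\eqref{eq:ax-bx-lower-min-max} with $a=b=A_i$ and $D=D_p^k$, and this equals $0$ by \Cref{r:diag} (using $D_p^k(i,i)=0$). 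Second, $\lambda_n(D_p^{k+1})=0$: since $D_p^k$ is strongly definite, \Cref{prop:ax-bx-lower-min-max} applies to it, and combined with the recursion~\eqref{e:dpk} it gives the inclusion $\mathtt{Im}(A,D_p^k,p/n)\subseteq Z_p^{k+1}$ of~\eqref{e:image-incl}; because $Z_p^k\neq\emptyset$, the image on the left is non-empty, so $Z_p^{k+1}\neq\emptyset$, and the non-emptiness criterion yields $\lambda_n(D_p^{k+1})=0$. Putting the two together, $D_p^{k+1}$ is strongly definite and $Z_p^{k+1}\neq\emptyset$, which closes the induction.

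The only point that needs care — and the only place a naive argument could go wrong — is the circular-looking dependency: \Cref{prop:ax-bx-lower-min-max}, and hence the inclusion~\eqref{e:image-incl} at level $k$, presupposes that the matrix it is fed is strongly definite, so that inclusion may be invoked only \emph{after} $D_p^k$ has been shown strongly definite. This is exactly why the induction hypothesis must be ``$D_p^k$ strongly definite'' (equivalently ``$Z_p^k\neq\emptyset$''), rather than trying to prove the zero-diagonal and the $\lambda_n=0$ parts by two separate inductions. Apart from this bookkeeping, every step is an immediate appeal to results already stated, so I do not expect any genuine obstacle.
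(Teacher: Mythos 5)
Your proposal is correct and follows essentially the same route as the paper, which states that the result follows ``by a simple induction'' from the image inclusion~\eqref{e:image-incl} (giving non-emptiness of $Z_p^{k+1}$, hence $\lambda_n(D_p^{k+1})=0$) together with \Cref{r:diag} (giving the zero diagonal). Your explicit remark that \Cref{prop:ax-bx-lower-min-max} may only be invoked at level $k$ after $D_p^k$ has been shown strongly definite is a useful clarification of a point the paper leaves implicit, but it does not change the argument.
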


\if{
Now given a DBM $\mathbb{E}_p^0=\R[n]$ and a matrix $A\in\R[n][n]$, we can compute the image approximation
\[
\mathbb{E}_{p}^{k+1}\supseteq \mathtt{Im}(A,\mathbb{E}_{p}^k,p/n),~~k=0,1,2,\ldots,
\]
Due to , the DBM $\mathbb{E}_{p}^{k+1}$ is given by
\[
\mathbb{E}_{p}^{k+1}(i,j)=\mathtt{MinMaxBlock}(\mathbb{E}_{p}^k+A_i^\top-A_j,p, n+1-p)
\]
where $A_i(l,\cdot)=A(i,\cdot)$ for each $i,l\in N$.
}\fi
We next establish the following properties of $D_p^k$.

\begin{prop}
\label{p:Dpk}
    For any $A\in\R^{n\times n}$, matrices $D_p^k$ have the following properties:
    \begin{itemize}
        \item[(i)] $D_1^1=D_2^1=\ldots =D_n^1$,
        \item[(ii)] $D_{p}^{k}\leq D_{p}^{k+1}$ for any $p\in[n]$ and $k\geq 0$.
    \end{itemize}
\end{prop}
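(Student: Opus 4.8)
The plan is to treat the two parts separately: part (i) follows directly from \Cref{p:minmaxvalue}(i), and part (ii) is a straightforward induction on $k$ whose inductive step rests on \Cref{p:minmaxvalue}(ii).

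For part (i), I would fix $i,j\in[n]$ and observe that $D_p^1(i,j)=\mathtt{MinMaxValue}(I_n,A_i,A_j\mid p,n+1-p)$ is, by definition~\eqref{eq:min-max-block}, the $\mathtt{MinMaxValue}$ of the matrix $F$ with entries $F(l,m)=I_n(l,m)+A(i,l)-A(j,m)$, i.e.\ of the matrix~\eqref{eq:L} with $a=A_i^\top$, $b=A_j$ and $D=I_n$. Since $I_n$ is a max-plus diagonal matrix, so is $F$, and its diagonal entries are $F(l,l)=A(i,l)-A(j,l)$. \Cref{p:minmaxvalue}(i) then gives $D_p^1(i,j)=\min_{l\in[n]}\{A(i,l)-A(j,l)\}$ for every $p\in[n]$; as the right-hand side is independent of $p$, we conclude $D_1^1=\cdots=D_n^1$.

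For part (ii) I would argue by induction on $k$, with $p$ fixed. For the base case $k=0$, recall that $D_p^0=I_n$ has $0$ on the diagonal and all off-diagonal entries equal to $-\infty$, whereas $D_p^1$ also has $0$ diagonal (noted right after~\eqref{e:dpk}, via \Cref{r:diag}); hence $D_p^0\leq D_p^1$ entrywise. For the inductive step, assume $D_p^k\leq D_p^{k+1}$, fix $i,j\in[n]$, and set $F^k(l,m):=D_p^k(l,m)+A(i,l)-A(j,m)$ and $F^{k+1}(l,m):=D_p^{k+1}(l,m)+A(i,l)-A(j,m)$. Since the same finite quantity $A(i,l)-A(j,m)$ is added to both and $D_p^k\leq D_p^{k+1}$, we get $F^k\leq F^{k+1}$; applying \Cref{p:minmaxvalue}(ii) yields
\[
D_p^{k+1}(i,j)=\mathtt{MinMaxValue}(F^k\mid p,n+1-p)\leq \mathtt{MinMaxValue}(F^{k+1}\mid p,n+1-p)=D_p^{k+2}(i,j),
\]
and since $i,j$ were arbitrary, $D_p^{k+1}\leq D_p^{k+2}$, completing the induction.

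There is no serious obstacle here; the only points that require care are (a) in part (i), genuinely verifying that \Cref{p:minmaxvalue}(i) applies, which works because $I_n$ is diagonal and the shifts $A(i,l)$, $-A(j,m)$ do not create any new finite off-diagonal entries; and (b) in part (ii), that the term added to $D_p^k$ and to $D_p^{k+1}$ to form $F^k$ and $F^{k+1}$ is literally identical, so that monotonicity of $\mathtt{MinMaxValue}$ applies verbatim. It is also worth noting in passing that, since $A\in\R^{n\times n}$, all entries of $D_p^k$ for $k\geq1$ are finite, so no indeterminate $-\infty+\infty$ expressions arise.
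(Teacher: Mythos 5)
Your proof is correct and follows essentially the same route as the paper: part (i) via \Cref{p:minmaxvalue}(i) applied to the diagonally shifted matrix, and part (ii) by induction using the monotonicity in \Cref{p:minmaxvalue}(ii). Your write-up is in fact slightly more careful than the paper's (explicitly checking that the shifts preserve the max-plus diagonal structure, and correctly invoking \Cref{p:minmaxvalue}(ii) where the paper's proof mistakenly cites \Cref{prop:ax-bx-lower-min-max}(ii)).
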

\begin{proof}$\left.\right.$\\
    \begin{itemize}
        \item[(i)] We recall that $D_p^0=I$ and using Proposition~\ref{p:minmaxvalue}(i) we obtain that
        \begin{align*}
            D_{p}^1(i,j)&=\mathtt{MinMaxValue}(I,A_i,A_j\mid p, n+1-p)\\
            &=\min\{A(i,1)-A(j,1),\ldots,A(i,n)-A(j,n)\}.
        \end{align*}
        This implies that $D_1^1=D_2^1=\ldots =D_n^1$.
        \item[(ii)] For $k=0$, it is straightforward to see that $D_{p}^0\leq D_p^1$ for each $p\in [n]$. Suppose now that $D_p^k\leq D_{p}^{k+1}$.  Then due to Proposition \ref{prop:ax-bx-lower-min-max} (ii), we have
        \begin{align*}
            D_{p}^{k+2}(i,j)&= \mathtt{MinMaxValue}(D_{p}^{k+1},A_i,A_j\mid p,n+1-p)\\
            &\geq \mathtt{MinMaxValue}(D_{p}^k,A_i,A_j\mid p,n+1-p)\\
            &=D_{p}^{k+1}(i,j)
        \end{align*}
        which implies that
        $D_{p}^{k+2}\geq D_{p}^{k+1}$ thus proving the claim by induction.
    \end{itemize}
\end{proof}

\begin{cor}
\label{c:Zpk}
    For any $A\in\R^{n\times n}$, matrices $Z_p^k$ have the following properties:
    \begin{itemize}
        \item[(i)] $Z_1^1=Z_2^1=\ldots =Z_n^1$,
        \item[(ii)] $Z_{p}^{k+1}\subseteq Z_p^k$ for any $p\in[n]$ and $k\geq 0$.
    \end{itemize}
\end{cor}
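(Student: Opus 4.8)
The plan is to deduce both statements directly from \Cref{p:Dpk}, translating the entrywise relations between the DBM matrices $D_p^k$ into relations between the corresponding zones $Z_p^k=\zone{D_p^k}$. The only ingredients beyond \Cref{p:Dpk} are the already-recorded facts that each matrix in $(\R\cup\{-\infty\})^{n\times n}$ with zero diagonal determines a unique zone via $\zone{\cdot}$ (and the $D_p^k$ do have zero diagonal by \Cref{r:diag}), together with the monotonicity of the zone operator with respect to the DBM entries.

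For part (i), I would simply note that $Z_p^1=\zone{D_p^1}$ and that \Cref{p:Dpk}(i) gives $D_1^1=D_2^1=\cdots=D_n^1$. Since $\zone{\cdot}$ is a well-defined map on zero-diagonal matrices, equal DBM matrices yield equal zones, and hence $Z_1^1=Z_2^1=\cdots=Z_n^1$.

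For part (ii), the key step is the anti-monotonicity of $\zone{\cdot}$: if $D,D'\in(\R\cup\{-\infty\})^{n\times n}$ have zero diagonal and $D\leq D'$ entrywise, then $\zone{D'}\subseteq\zone{D}$. This is immediate from the definition of a zone, since each defining inequality $x_i-x_j\geq D'(i,j)$ of $\zone{D'}$ implies the weaker inequality $x_i-x_j\geq D(i,j)$, so every point of $\zone{D'}$ lies in $\zone{D}$. Applying this to the relation $D_p^k\leq D_p^{k+1}$ supplied by \Cref{p:Dpk}(ii) yields $Z_p^{k+1}=\zone{D_p^{k+1}}\subseteq\zone{D_p^k}=Z_p^k$ for every $p\in[n]$ and $k\geq 0$.

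I do not anticipate a genuine obstacle: the corollary is a purely formal restatement of \Cref{p:Dpk} at the level of zones. The only subtlety worth flagging explicitly is the direction reversal — larger DBM entries correspond to tighter constraints and hence to smaller zones — which is why the chain $D_p^k\leq D_p^{k+1}$ produces the descending chain $Z_p^{k+1}\subseteq Z_p^k$.
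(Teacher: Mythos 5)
Your proposal is correct and follows essentially the same route as the paper: both parts are read off from \Cref{p:Dpk} via the anti-monotonicity of $D\mapsto\zone{D}$. The only cosmetic difference is that you justify the anti-monotonicity directly from the defining inequalities of a zone, whereas the paper cites \Cref{zones-meet} (intersection via $D_1\oplus D_2$) for the same fact; both justifications are valid.
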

\begin{proof}
 The first part follows from \Cref{p:Dpk} part (i) since $Z_p^1=\zone{D_p^1}$ for all $p\in [n]$ by definition, and the second part follows from \Cref{p:Dpk} part (ii) since $Z_p^k=\zone{D_p^k}$ for all $k\in [n]$ and as by \Cref{zones-meet} we have 
 $\zone{D_2}\subseteq \zone{D_1}$ whenever $D_2\geq D_1$ for any two strongly definite $D_1$ and $D_2$.
\end{proof}

\begin{prop}
\label{prop:stabilize}
     Given $A\in \R^{n\times n}$ with all entries integer, the 
      sequence of matrices $\{D_p^k\}_{k\geq 1}$ stabilizes after a finite number of steps.
\end{prop}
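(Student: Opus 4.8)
The plan is to show that, for each fixed pair $(i,j)\in[n]\times[n]$, the scalar sequence $\{D_p^k(i,j)\}_{k\geq 1}$ is non-decreasing, takes only finitely many (integer) values, and is therefore eventually constant; since there are only $n^2$ pairs, taking the largest of the corresponding stabilization indices yields a single $K$ with $D_p^k=D_p^K$ for all $k\geq K$.

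First I would check that for every $k\geq 1$ the matrix $D_p^k$ has all entries finite integers. This goes by induction on $k$: for $k=1$ the formula obtained in the proof of \Cref{p:Dpk}(i), namely $D_p^1(i,j)=\min_{m\in[n]}\bigl(A(i,m)-A(j,m)\bigr)$, is a finite integer because $A$ is integer; and if $D_p^k$ has finite integer entries, then in \eqref{e:dpk} each quantity $D_p^k(i',j')+A(i,i')-A(j,j')$ is a finite integer, hence so is their min-max value $D_p^{k+1}(i,j)$. Monotonicity of the sequence is already available from \Cref{p:Dpk}(ii), which gives $D_p^k\leq D_p^{k+1}$ for all $k\geq 0$; in particular $D_p^k\geq D_p^1$ entrywise for every $k\geq 1$.

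The key remaining ingredient is a uniform upper bound on the entries, and this is where strong definiteness enters. Every $D_p^k$ is strongly definite, as already established, so $\lambda_n(D_p^k)=0$, which means $D_p^k$ has no cycle of positive mean weight; applying this to the two-cycle $i\to j\to i$ gives $D_p^k(i,j)+D_p^k(j,i)\leq 0$. Combining this with $D_p^k(j,i)\geq D_p^1(j,i)$ from the previous step, I obtain
\[
D_p^1(i,j)\ \leq\ D_p^k(i,j)\ \leq\ -D_p^k(j,i)\ \leq\ -D_p^1(j,i)\qquad\text{for all }k\geq 1,
\]
where both $D_p^1(i,j)$ and $-D_p^1(j,i)$ are fixed finite integers by the first step. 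Hence, for each $(i,j)$, the sequence $\{D_p^k(i,j)\}_{k\geq 1}$ ranges over the finite set of integers in the interval $[\,D_p^1(i,j),\,-D_p^1(j,i)\,]$ and is non-decreasing, so it stabilizes after finitely many steps; the proof is then finished by taking the maximum of these thresholds over all $n^2$ pairs.

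I expect the substantive part to be the extraction of the uniform upper bound from strong definiteness — the two-cycle estimate $D_p^k(i,j)+D_p^k(j,i)\leq 0$ — together with the observation that the entries stay in $\mathbb{Z}$, since this is exactly where the integrality hypothesis on $A$ is needed: a bounded non-decreasing sequence of reals need only converge, whereas one valued in $\mathbb{Z}$ must be eventually constant. Everything else (the induction keeping the entries integral, and the passage from entrywise stabilization to matrix stabilization via finiteness of $[n]\times[n]$) is routine bookkeeping.
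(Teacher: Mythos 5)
Your proof is correct, but the way you obtain the uniform upper bound differs from the paper's. The paper invokes the nonlinear Perron--Frobenius result (\Cref{p:lambdap}) to get a finite eigenvector $x$ associated with $\omega=p/n$, forms $D_x(i,j)=x_i-x_j$, and notes that since $x$ lies in every $Z_p^k$ one has $D_p^k\leq D_x$; stabilization then follows from monotonicity and integrality exactly as in your last step. You instead extract the bound from the already-established strong definiteness of each $D_p^k$: the absence of positive-mean cycles applied to the two-cycle gives $D_p^k(i,j)\leq -D_p^k(j,i)\leq -D_p^1(j,i)$, a fixed finite integer. Both routes are valid. Yours is more self-contained and elementary in that it never appeals to the existence of an eigenvector, only to non-emptiness of the zones $Z_p^k$ (which the paper derives from the image inclusion \eqref{e:image-incl}); the paper's argument produces a semantically meaningful upper bound ($D_x$ canonically represents the line $\{\lambda+x\}$ sitting inside every $Z_p^k$), which ties the stabilization directly to the eigenspace being over-approximated. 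Your explicit induction establishing that all entries of $D_p^k$ are finite integers is also more careful than the paper's one-line observation, and your closing remark correctly isolates where integrality (as opposed to mere boundedness) is indispensable.
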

\begin{proof}
It can be observed that if $A$ has integer entries then the whole sequence of matrices $\{D_p^k\}_{k\geq 1}$ has integer entries. As $A$ has a finite eigenvector $x$ associated with $\omega=p/n$, this vector can be used to create a strongly definite $D_x$ with entries $D_x(i,j)=x_i-x_j$. This matrix canonically represents the zone $\{\lambda+x\mid \lambda\in\R\}$. Since this is a canonical DBM representation,
$D_x$ is an upper bound for all $D_p^k$, and since the matrices $D_p^k$ are increasing and have integer entries only, their sequence is stabilizing after a finite number of steps. 
\end{proof}

\begin{rem}
{\rm 
Proposition \ref{prop:stabilize} also applies when $A$ have rational entries.}
\end{rem}

The limit of the sequence $\{D_p^k\}$ will be denoted by $\overline{D}_p$ and the corresponding zone $\zone{\overline{D}_p}$ by $\overline{Z}_p$. We can use this zone to over-approximate the eigenspace $E_p(A)$

\begin{prop}
    Let $A\in \R^{n\times n}$, $p\in [n]$ and $\overline{Z}_p=\zone{\overline{D}_p}$. Then we have $E_p(A)\subseteq \overline{Z}_p$.
\end{prop}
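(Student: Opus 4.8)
The plan is to show that every $x\in E_p(A)$ belongs to each zone $Z_p^k$ of the nested sequence, and then observe that $\overline Z_p$ is exactly the intersection of that sequence. The one ingredient that makes everything work is the elementary fact that a zone is invariant under translation by multiples of the all-ones vector: if $Z=\zone D$ and $\lambda\in\R$, then $y\in Z$ if and only if $\lambda+y\in Z$, since the defining constraints only involve the differences $y_i-y_j$, which are unchanged by adding $\lambda$ to every coordinate.

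First I would record that $\overline Z_p=\bigcap_{k\ge 0}Z_p^k$. By \Cref{p:Dpk}(ii) the matrices $D_p^k$ increase with $k$ and converge entrywise to $\overline D_p$, so $\overline D_p(i,j)=\sup_k D_p^k(i,j)$ for all $i,j$. Hence a vector $x$ satisfies $x_i-x_j\ge\overline D_p(i,j)$ for all $i,j$ (that is, $x\in\overline Z_p$) precisely when $x_i-x_j\ge D_p^k(i,j)$ for all $i,j$ and all $k$ (that is, $x\in Z_p^k$ for every $k$); this gives the claimed equality. Alternatively, one may invoke \Cref{prop:stabilize} together with the nesting $Z_p^{k+1}\subseteq Z_p^k$ from \Cref{c:Zpk}(ii): for rational $A$ the chain stabilizes, so $\overline Z_p=Z_p^k$ for $k$ large, which is the intersection.

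The core of the argument is then an induction on $k$: if $x$ satisfies $A\otimes_\omega x=\lambda_p\otimes x$, I claim $x\in Z_p^k$ for every $k\ge 0$. For $k=0$ this is immediate since $Z_p^0=\zone{I_n}=\R^n$. Assume $x\in Z_p^k$. Then $A\otimes_\omega x\in\mathtt{Im}(A,D_p^k,p/n)\subseteq Z_p^{k+1}$ by \eqref{e:image-incl}. But $A\otimes_\omega x=\lambda_p+x$, so $\lambda_p+x\in Z_p^{k+1}$, and by the translation-invariance of the zone $Z_p^{k+1}$ this yields $x\in Z_p^{k+1}$, completing the induction. Combining this with the first step gives $x\in\bigcap_{k\ge 0}Z_p^k=\overline Z_p$, which is the desired inclusion $E_p(A)\subseteq\overline Z_p$.

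I do not expect a genuine obstacle here. The only two points that deserve to be stated with care are that $\overline Z_p$ is the intersection of the decreasing chain of zones (so that ``$x$ in every $Z_p^k$'' actually implies ``$x\in\overline Z_p$''), and that the translation-invariance of zones is exactly what converts the image membership $\lambda_p+x=A\otimes_\omega x\in Z_p^{k+1}$ back into $x\in Z_p^{k+1}$.
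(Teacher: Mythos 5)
Your proof is correct and follows essentially the same route as the paper's: an induction on $k$ using the image inclusion \eqref{e:image-incl} to show $E_p(A)\subseteq Z_p^k$ for all $k$. The only cosmetic difference is where the shift by $\lambda_p$ gets absorbed — the paper hides it in the identity $\mathtt{Im}(A,E_p(A),p/n)=E_p(A)$ (translation-invariance of the eigenspace), while you cancel it via the translation-invariance of the zone $Z_p^{k+1}$; your explicit treatment of $\overline{Z}_p=\bigcap_k Z_p^k$ is a welcome extra bit of care that the paper glosses over.
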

\begin{proof}
We have $\mathtt{Im}(A,E_p(A),p/n)=E_p(A)$ by the definition of eigenvectors. Since $E_p(A)\subseteq Z_p^0=\R^n$, we obtain that 
\begin{equation*}
E_p(A)=\mathtt{Im}(A,E_p(A),p/n)\subseteq \mathtt{Im}(A,Z_p^0,p/n)\subseteq Z_p^1. 
\end{equation*}
By simple induction we obtain in a similar way that $E_p\subseteq Z_p^k$ for all $k\geq 0$ and any $p\in [n]$. Hence $E_p\subseteq \overline{Z}_p$.    
\end{proof}

In addition to the approximation for the eigenspace $E_p(A)$, it is also possible to obtain the bounds for the corresponding eigenvalue $\lambda_p$. These bounds result from the lower and upper bounds for
\begin{equation}
\label{eq:ax-xi}
    \bomega \{A(i,1)+x_1,\ldots,A(i,n)+x_n\} - x_i,~~i\in [n]
\end{equation}
where $x\in \overline{D}_p$.
\begin{prop}
\label{p:LUbounds}
    Let $A\in \R^{n\times n}$ and $p\in [n]$, and define the following two matrices:
    \begin{equation}
    \label{e:LU}
    L=A\otimes_\omega \overline{D}_p~\text{and}~ U=A\otimes_\omega (-\overline{D}_p)^\top, ~~\omega=p/n.
    \end{equation}
    Then, the eigenvalue $\lambda_p$ satisfies the following bounds:
    \[
    \max\{L(1,1),\ldots,L(n,n)\}\leq \lambda_p\leq \min\{U(1,1),\ldots,U(n,n)\}
    % \max_{i\in N}\{A(i,\cdot)\otimes_{\omega} \mathbb{E}_p^\ast(\cdot,i)\}\leq \lambda_p\leq \min_{i\in N}\{A(i,\cdot)\otimes_{\omega} (-\mathbb{E}_p^\ast(i,\cdot)^\top)\},
    % \bomega \{A(i,1)+\mathbb{E}_p^\ast(1,i),\ldots,A(i,n)+\mathbb{E}_p^\ast(n,i)\}
    \]
\end{prop}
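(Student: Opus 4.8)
The plan is to exploit the already established inclusion $E_p(A)\subseteq\overline{Z}_p$ together with the monotonicity of the maxmin-$\omega$ operation. First I would invoke \Cref{p:lambdap}: the function $f^{(p)}$ has a \emph{finite} eigenvector $x\in\R^n$ associated with the (unique finite) eigenvalue $\lambda_p$, so $A\otimes_\omega x=\lambda_p+x$. Since $x\in E_p(A)\subseteq\overline{Z}_p=\zone{\overline{D}_p}$, the defining inequalities of this zone give $x_i-x_j\geq\overline{D}_p(i,j)$ for all $i,j\in[n]$; equivalently, for every pair $(i,j)$ one has the two-sided bound $\overline{D}_p(j,i)\leq x_j-x_i\leq -\overline{D}_p(i,j)$.

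Next, from the eigen-equation I would rewrite, for each fixed $i\in[n]$,
\[
\lambda_p=(A\otimes_\omega x)_i-x_i=\bomega\{A(i,1)+x_1-x_i,\ \ldots,\ A(i,n)+x_n-x_i\},
\]
using additive homogeneity of $\bomega$ to subtract the constant $x_i$ inside. Applying the two-sided bound on $x_j-x_i$ term by term, together with the monotonicity of $\bomega$ on $n$-element multisets (which is immediate from the CNF/DNF expressions of \Cref{prop:max-min-function}, as $\min$ and $\max$ are monotone), I obtain
\[
\bomega\{A(i,1)+\overline{D}_p(1,i),\ \ldots,\ A(i,n)+\overline{D}_p(n,i)\}\ \leq\ \lambda_p\ \leq\ \bomega\{A(i,1)-\overline{D}_p(i,1),\ \ldots,\ A(i,n)-\overline{D}_p(i,n)\}.
\]

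Finally I would identify the two sides with the diagonal entries of $L$ and $U$ via the definition of the maxmin-$\omega$ matrix product: $L(i,i)=(A\otimes_\omega\overline{D}_p)(i,i)=\bomega\{A(i,1)+\overline{D}_p(1,i),\ldots,A(i,n)+\overline{D}_p(n,i)\}$ and, since $(-\overline{D}_p)^\top(k,i)=-\overline{D}_p(i,k)$, also $U(i,i)=(A\otimes_\omega(-\overline{D}_p)^\top)(i,i)=\bomega\{A(i,1)-\overline{D}_p(i,1),\ldots,A(i,n)-\overline{D}_p(i,n)\}$. Hence $L(i,i)\leq\lambda_p\leq U(i,i)$ for every $i\in[n]$; taking the maximum over $i$ on the left and the minimum over $i$ on the right yields the stated bounds.

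I do not expect a serious obstacle here: the argument is essentially bookkeeping on top of the inclusion $E_p(A)\subseteq\overline{Z}_p$. The only points needing care are the index conventions in the products $A\otimes_\omega\overline{D}_p$ and $A\otimes_\omega(-\overline{D}_p)^\top$, so that exactly the right diagonal entries are extracted, and a clean one-line justification of the monotonicity of $\bomega$ on multisets of fixed cardinality (referencing \Cref{prop:max-min-function}), which I would record explicitly before the chain of inequalities.
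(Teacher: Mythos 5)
Your proposal is correct and follows essentially the same route as the paper's own proof: restrict to a finite eigenvector $x\in E_p(A)\subseteq\overline{Z}_p$, bound each term $A(i,j)+x_j-x_i$ from below by $A(i,j)+\overline{D}_p(j,i)$ and from above by $A(i,j)-\overline{D}_p(i,j)$, invoke monotonicity of $\bomega$, and identify the results with $L(i,i)$ and $U(i,i)$ before taking the max and min over $i$. The only difference is that you state the monotonicity of $\bomega$ explicitly, which the paper leaves implicit.
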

\begin{proof}
We first observe that, for all $i\in [n]$, we can obtain the following lower bound for \eqref{eq:ax-xi}, which becomes also a lower bound for $\lambda_p$ when we take $x\in E_p(A)$:
    \begin{align*}
        \lambda_p&=\bomega \{A(i,1)+x_1-x_i,\ldots,A(i,n)+x_n-x_i\}\\
        &\geq \bomega \{A(i,1)+\overline{D}_p(1,i),\ldots,A(i,n)+\overline{D}_p(n,i) \}\\
        &=A(i,\cdot)\otimes_{\omega} \overline{D}_p(\cdot,i)=L(i,i)
    \end{align*}
    As we have these inequalities for all $i$, we obtain $\lambda_p\geq \max_i L(i,i)$. We also obtain the following upper bound for all $i$: 
    \begin{align*}
    \lambda_p&=\bomega \{A(i,1)+x_1-x_i,\ldots,A(i,n)+x_n-x_i\}\\
    &\leq\bomega \{A(i,1)-\overline{D}_p(i,1),\ldots,A(i,n)-\overline{D}_p(i,n) \}\\
        &=A(i,\cdot)\otimes_{\omega} (-\overline{D}_p(i,\cdot))^\top=U(i,i),
    \end{align*}
    and hence $\lambda_p\leq \min_i U(i,i)$. Thus we obtain the desired lower and upper bounds for $\lambda_p$.
\end{proof}

%\todo[inline]{S: why is this a corollary of the previous and what is the importance of it?}
\if{
\begin{cor}
    Let $A\in \R[n][n]$ and $p\in [n]$. Let the matrix $L$ as in~\Cref{e:LU}. If there exists $\alpha$ such that $L=\alpha\otimes \overline{D}_p$ then $\lambda_p(A)=\alpha$ and $E_p(A)=\zone{\overline{D}_p}=\overline{Z}_p$. 
\end{cor}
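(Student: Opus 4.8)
\medskip
\noindent\textbf{Proof proposal.}
The plan is to split the claim into the eigenvalue identity $\lambda_p(A)=\alpha$ and the set identity $E_p(A)=\overline{Z}_p$, starting from the observation that, by the definition~\eqref{e:LU} of $L$, the hypothesis $L=\alpha\otimes\overline{D}_p$ says precisely that each column $\overline{D}_p(\cdot,j)$ of $\overline{D}_p$ satisfies $A\otimes_\omega\overline{D}_p(\cdot,j)=\alpha+\overline{D}_p(\cdot,j)$, i.e.\ is an eigenvector of $A\otimes_\omega$ associated with $\alpha$. First I would record that all entries of $\overline{D}_p$ are finite: on the one hand $\overline{D}_p\geq D_p^1$, whose entries $D_p^1(i,j)=\min_k(A(i,k)-A(j,k))$ are finite; on the other hand a column of $\overline{D}_p$ containing a $-\infty$ entry would, together with $A\otimes_\omega\overline{D}_p(\cdot,j)=\alpha+\overline{D}_p(\cdot,j)$ and $\alpha\in\R$, contradict \Cref{p:lambdap}(i). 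Hence each $\overline{D}_p(\cdot,j)$ is a finite eigenvector with eigenvalue $\alpha$, so \Cref{p:lambdap}(ii) forces $\alpha=\lambda_p$; in particular the diagonal of $L$ is constant, $L(i,i)=\alpha+\overline{D}_p(i,i)=\lambda_p$ for all $i$, and $\{\overline{D}_p(\cdot,j):j\in[n]\}\subseteq E_p(A)$.

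Since $E_p(A)\subseteq\overline{Z}_p$ is already known, for the set identity it remains to prove $\overline{Z}_p\subseteq E_p(A)$, i.e.\ that every $x\in\overline{Z}_p$ satisfies $(A\otimes_\omega x)_i=\lambda_p+x_i$ for all $i$. Fix $x\in\overline{Z}_p=\zone{\overline{D}_p}$; then $\overline{D}_p(k,i)\leq x_k-x_i\leq-\overline{D}_p(i,k)$ for all $i,k$. Using $x_k\geq x_i+\overline{D}_p(k,i)$ and the monotonicity of the maxmin-$\omega$ operation in each of its arguments,
\[
(A\otimes_\omega x)_i=\bomega\{A(i,k)+x_k\mid k\in[n]\}\geq\bomega\{A(i,k)+\overline{D}_p(k,i)\mid k\in[n]\}+x_i=L(i,i)+x_i=\lambda_p+x_i,
\]
while using $x_k\leq x_i-\overline{D}_p(i,k)$ instead yields the complementary estimate
\[
(A\otimes_\omega x)_i\leq\bomega\{A(i,k)-\overline{D}_p(i,k)\mid k\in[n]\}+x_i=U(i,i)+x_i.
\]
Therefore $\overline{Z}_p\subseteq E_p(A)$ — and hence $E_p(A)=\overline{Z}_p=\zone{\overline{D}_p}$ — follows as soon as one has $U(i,i)=\lambda_p$ for every $i$; note that the inequality $U(i,i)\geq\lambda_p$ is already guaranteed by \Cref{p:LUbounds}.

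The step I expect to be the main obstacle is therefore exactly the identity $U(i,i)=\lambda_p$. It is not formal: $U(i,i)=\bomega\{A(i,k)-\overline{D}_p(i,k)\mid k\in[n]\}$ involves the numbers $-\overline{D}_p(i,k)$, whereas $L(i,i)=\bomega\{A(i,k)+\overline{D}_p(k,i)\mid k\in[n]\}=\lambda_p$ involves the numbers $\overline{D}_p(k,i)$, and for a canonical DBM one only knows $\overline{D}_p(i,k)+\overline{D}_p(k,i)\leq0$, i.e.\ $-\overline{D}_p(i,k)\geq\overline{D}_p(k,i)$, typically with strict inequality, so $U(i,i)=\lambda_p$ does not follow from $L(i,i)=\lambda_p$ alone. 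To close the gap I would try to use that $\overline{D}_p$ is a genuine fixed point of the refinement, i.e.\ the stabilized identity $\overline{D}_p(i,j)=\mathtt{MinMaxValue}(\overline{D}_p,A_i,A_j\mid p,n+1-p)$, in conjunction with the hypothesis; a natural auxiliary tool is the max/min-plus duality obtained by passing from $(A,p)$ to $(-A,n+1-p)$ — under which $A\otimes_\omega x=-\big((-A)\otimes_{(n+1-p)/n}(-x)\big)$, the matrix $\overline{D}_{n+1-p}$ built for $-A$ equals $\overline{D}_p^\top$, $E_p(A)=-E_{n+1-p}(-A)$, and the roles of $L$ and $U$ are interchanged. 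Should this implication turn out to fail, the corollary should be corrected by adding the symmetric hypothesis $U=\alpha\otimes(-\overline{D}_p)^\top$ — or merely that the diagonal of $U$ is the constant $\alpha$ — after which the two displays above close the argument immediately.
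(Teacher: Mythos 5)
Your treatment of the eigenvalue identity coincides with the paper's. The paper's own proof is two sentences long: it first observes, exactly as you do, that $L=\alpha\otimes\overline{D}_p$ means every column of $\overline{D}_p$ is an eigenvector associated with $\alpha$, whence $\alpha=\lambda_p$; your added checks (finiteness of the entries of $\overline{D}_p$ via $\overline{D}_p\geq D_p^1$ and \Cref{p:lambdap}, uniqueness of the finite eigenvalue) only make this step more careful than the original.

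The substantive comparison concerns the set identity. The paper's entire argument for $\overline{Z}_p\subseteq E_p(A)$ is the clause ``but any vector in $\overline{Z}_p$ belongs to $E_p(A)$'' --- that is, it asserts without justification precisely the inclusion you could not derive. (Tellingly, this corollary and its proof are commented out in the source and do not appear in the compiled paper, which suggests the authors shared your doubts.) Your diagnosis of where the difficulty sits is accurate: the hypothesis on $L$ yields $A\otimes_\omega x\geq\lambda_p+x$ on all of $\overline{Z}_p$, the complementary inequality needs $U(i,i)=\lambda_p$ for every $i$, and nothing in the paper supplies the implication $L=\alpha\otimes\overline{D}_p\Rightarrow \operatorname{diag}(U)=\alpha\,$. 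Your reduction is moreover essentially sharp: when $\overline{D}_p$ is canonical, the vector $-\overline{D}_p(i,\cdot)^\top$ belongs to $\overline{Z}_p$ (since $\overline{D}_p\otimes\overline{D}_p\leq\overline{D}_p$) and attains $(A\otimes_\omega x)_i-x_i=U(i,i)$, so if $U(i,i)>\lambda_p$ for some $i$ the conclusion genuinely fails rather than merely escaping this proof strategy. Hence either the missing implication must be established (your duality $(A,p)\mapsto(-A,n+1-p)$, which interchanges the roles of $L$ and $U$, is the natural tool to try, possibly combined with the fixed-point identity for $\overline{D}_p$), or the statement must be strengthened by adding $\operatorname{diag}(U)=\alpha$ as you propose, after which your two displayed estimates close the argument. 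In short: your proposal proves everything the paper actually proves, and correctly isolates --- and makes explicit --- the step that the paper's own proof leaves as a bare assertion.
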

\begin{pf}
In this case, every column of $\overline{D}_p$ is an eigenvector of $A$ associated with $\omega$, and $\alpha$ is the corresponding eigenvalue. Furthermore, since $\overline{Z}_p$ is an over-approximation of $E_p(A)$ but any vector in $\overline{Z}_p$ belongs to $E_p(A)$, we have $E_p(A)=\overline{Z}_p(A)$.
\end{pf}
}\fi

\if{
%S: this proposition cannot be true.
\begin{prop}
    Given a matrix $A\in \R[n][n]$ and $p\in [n]$, there exists a matrix $B\in \Rmax[n][n]$ such that $\lambda_p(A)=\lambda_n(B)$ and $E_p(A)=E_n(B)$
\end{prop}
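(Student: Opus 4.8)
The plan is to normalize the eigenvalue and then take for $B$ the limiting matrix $\overline{D}_p$ of the over-approximating sequence, reducing the whole statement to one missing inclusion. First I would normalize: writing $\lambda_p=\lambda_p(A)$ for the unique finite maxmin-$\omega$ eigenvalue of $A$, replace $A$ by $A':=(-\lambda_p)\otimes A$ (i.e.\ subtract the scalar $\lambda_p$ from every entry). Since $A'\otimes_\omega x=(A\otimes_\omega x)-\lambda_p$, we have $A'\otimes_\omega x=0\otimes x$ iff $A\otimes_\omega x=\lambda_p\otimes x$, so $E_p(A')=E_p(A)$ and $\lambda_p(A')=0$. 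If I find $B'\in\Rmax^{n\times n}$ with $\lambda_n(B')=0$ and $E_n(B')=E_p(A')$, then $B:=\lambda_p\otimes B'$ has $\lambda_n(B)=\lambda_p=\lambda_p(A)$ and $E_n(B)=E_n(B')=E_p(A)$. So from now on assume $\lambda_p(A)=0$ and look for $B'$.

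The natural candidate is $B':=\overline{D}_p$, which exists as the monotone limit of \Cref{p:Dpk}(ii), bounded above by the DBM of any eigenvector exactly as in the proof of \Cref{prop:stabilize} (and stabilizing in finitely many steps in the rational case). As established in \Cref{ss:sequence}, $\overline{D}_p$ is strongly definite, so $\lambda_n(\overline{D}_p)=0$; and by the relation between strongly definite matrices and zones recalled in \Cref{ss:zones}, $\overline{Z}_p=\zone{\overline{D}_p}$ is exactly the finite part of the max-plus eigenspace of $\overline{D}_p$, i.e.\ $E_n(\overline{D}_p)=\overline{Z}_p$. The inclusion $E_p(A)\subseteq\overline{Z}_p$ is already proved in \Cref{ss:sequence}, so the statement reduces to the reverse inclusion $\overline{Z}_p\subseteq E_p(A)$: that every $x\in\zone{\overline{D}_p}$ satisfies $A\otimes_\omega x=x$.

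For this reverse inclusion I would sandwich $A\otimes_\omega x$ using the two normal forms of $\otimes_\omega$. The map $f^{(p)}=A\otimes_\omega(\cdot)$ is super-additive with respect to $\oplus$ (it is a minimum of max-plus linear maps) and sub-additive with respect to $\oplus^\prime=\min$ (it is a maximum of min-plus linear maps), and $\overline{Z}_p=\zone{\overline{D}_p}$ is at once the max-plus span of the columns of $\overline{D}_p$ and the min-plus span of the columns of $(-\overline{D}_p)^\top$. The key step is to show that $A\otimes_\omega$ fixes both families of generators, i.e.\ that $A\otimes_\omega\overline{D}_p=\overline{D}_p$ and $A\otimes_\omega(-\overline{D}_p)^\top=(-\overline{D}_p)^\top$; equivalently, in the notation of \Cref{p:LUbounds}, $L=\overline{D}_p$ and $U=(-\overline{D}_p)^\top$. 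Granting this, for any $x\in\overline{Z}_p$: expanding $x$ as a max-plus combination of the columns of $\overline{D}_p$ and using super-additivity gives $A\otimes_\omega x\ge x$, while expanding $x$ as a min-plus combination of the columns of $(-\overline{D}_p)^\top$ and using sub-additivity gives $A\otimes_\omega x\le x$; hence $A\otimes_\omega x=x$, i.e.\ $x\in E_p(A)$.

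\textbf{Main obstacle.} I expect the genuine difficulty to be exactly the key step $L=\overline{D}_p$ (and its dual $U=(-\overline{D}_p)^\top$), which is equivalent to the equality $\overline{Z}_p=E_p(A)$ — precisely the event that the heuristic of \Cref{ss:refinement} is designed to detect, and which fails for some $A$ (the over-approximation can be strict). More fundamentally, $E_n(B)$ is always a closed, max-plus convex cone, whereas a maxmin-$\omega$ eigenspace need not be max-plus convex; so when $E_p(A)$ is not max-plus convex, no $B$ with $E_n(B)=E_p(A)$ can exist at all. Consequently I would expect the proposition to hold only under an additional hypothesis ensuring $\overline{Z}_p=E_p(A)$ (for instance $A\otimes_\omega\overline{D}_p=\lambda_p\otimes\overline{D}_p$, or success of the refinement procedure), and the plan above is really a proof of that conditional version; the unconditional statement, if true, would require a different construction of $B$ — necessarily allowing $-\infty$ entries — together with a characterization of which ambitropical cones arise as max-plus eigenspaces.
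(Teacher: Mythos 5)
First, a point of reference: the paper does not actually prove this proposition. In the source it sits inside a commented-out block (an \texttt{\textbackslash if\{...\}\textbackslash fi} guard), so it never reaches the compiled document and no proof is offered; there is nothing to compare your argument against, and your instinct that the unconditional statement is suspect is exactly right. The positive steps of your proposal are sound as far as they go: the normalization by $\lambda_p$ is harmless; $\overline{D}_p$ is strongly definite, so $\lambda_n(\overline{D}_p)=0$ and $E_n(\overline{D}_p)=\zone{\overline{D}_p}=\overline{Z}_p$ by the fact recalled in \Cref{ss:zones}; and the sandwich argument via the max-plus generators (columns of $\overline{D}_p$) and the min-plus generators (columns of $(-\overline{D}_p)^\top$) would indeed close the argument if $L=\overline{D}_p$ and $U=(-\overline{D}_p)^\top$. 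But, as you yourself note, that key step is equivalent to $\overline{Z}_p=E_p(A)$, which is precisely what fails in general --- the whole of \Cref{ss:refinement} exists because the over-approximation can be strict --- so your plan only yields a conditional version.

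Second, your obstruction can be upgraded from a doubt to an outright refutation using the paper's own data. $E_n(B)$ is always closed under $\oplus$ and scalar $\otimes$, since $B\otimes(\alpha\otimes x\oplus\beta\otimes y)=\alpha\otimes(B\otimes x)\oplus\beta\otimes(B\otimes y)$. The one caveat in your argument is that \Cref{exmp:interesting} exhibits a failure of \emph{min-plus} convexity, not max-plus convexity: there $\overline{D}_2(\cdot,1)$ and $\overline{D}_2(\cdot,2)$ lie in $E_2(A)$ while their min-plus combination $\overline{D}_2(\cdot,3)$ does not. This is repaired by the duality $(-A)\otimes_{(n+1-p)/n}(-x)=-\bigl(A\otimes_{p/n}x\bigr)$ (the $p$-th smallest element of a multiset is minus the $(n+1-p)$-th smallest of its negation), which gives $E_{n+1-p}(-A)=-E_p(A)$ and turns min-plus combinations into max-plus ones. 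Applied to the matrix of \Cref{exmp:interesting} with $n=4$, $p=2$, it shows that $E_3(-A)$ contains two vectors whose max-plus combination is not in $E_3(-A)$; hence no $B\in\Rmax^{4\times 4}$ can satisfy $E_4(B)=E_3(-A)$, and the proposition is false as stated. Your closing sentence should therefore be strengthened: the statement needs an additional hypothesis, and the natural one is the one you identify ($A\otimes_\omega\overline{D}_p=\lambda_p\otimes\overline{D}_p$, or the hypothesis of \Cref{cor:eigen-combin} that the strongly active matrix $T$ has a finite entry in every row, which forces $E_p(A)$ to be closed under both max-plus and min-plus combinations).
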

}\fi
\begin{exmp}
\label{ex:Ds}
    Suppose we have a matrix
    \[A=
    \begin{bmatrix}
    4&7&10&2\\
    9&10&2&0\\
    10&9&7&2\\
    9&10&7&1
    \end{bmatrix}.
    \]
    For $\omega=1/2$, the corresponding image approximation sets are
    \[
    D_2^0=\begin{bmatrix}
        0&-\infty&-\infty&-\infty\\
        -\infty&0&-\infty&-\infty\\
        -\infty&-\infty&0&-\infty\\
        -\infty&-\infty&-\infty&0
    \end{bmatrix},
    D_2^1=\begin{bmatrix}
        0&-5&-6&-5\\
        -8&0&-5&-5\\
        -3&-1&0&-1\\
        -3&0&-1&0
    \end{bmatrix}
    \]
    \[
    D_2^2=\begin{bmatrix}
        0&-1&-6&-5\\
        -5&0&-5&-5\\
        0&2&0&-1\\
        0&1&-1&0
    \end{bmatrix},
     D_2^3=\begin{bmatrix}
        0&-1&-6&-5\\
        -2&0&-5&-5\\
        2&2&0&-1\\
        2&1&-1&0
    \end{bmatrix}
    \]
    \[
     D^4_2=\begin{bmatrix}
        0&-1&-6&-5\\
        -1&0&-5&-5\\
        2&2&0&-1\\
        2&1&-1&0
    \end{bmatrix},
        D^5_2=\begin{bmatrix}
        0&-1&-6&-5\\
        -1&0&-5&-5\\
        2&2&0&-1\\
        2&1&-1&0
    \end{bmatrix}
    \]
    Notice that $D^4_2=D^5_2=\overline{D}_{2}$. 
    Furthermore, we have
    \[
    L= A\otimes_\omega \overline{D}_2=\begin{bmatrix}
        4&3&1&2\\
        4&4&2&1\\
        8&9&4&4\\
        9&8&3&4
    \end{bmatrix}~\text{and}~
    U=A\otimes_\omega (-\overline{D}_2)^\top=
    \begin{bmatrix}
        7&7&3&2\\
        8&7&2&3\\
        10&9&7&8\\
        9&10&7&7
    \end{bmatrix}.
    \]
    Hence, $4\leq \lambda_2\leq 7$.
\if{    
    Note that the actual eigenvalue is $\lambda_2=5.5$ and $x=[0,\; 1,\; 4.5,\; 3.5]^{\top}$ is an eigenvector corresponding to this eigenvalue. This eigenvector corresponds to a DBM 
    \begin{equation*}
        D_x=
        \begin{bmatrix}
            0 & -1 & -4.5 & -3.5\\
            1 & 0 & -3.5 & -2.5\\
            4.5 & 3.5 & 0 & 1\\
            3.5 & 2.5 & -1 & 0
        \end{bmatrix},
    \end{equation*}
This DBM is an upper bound for the sequence $\{D_2^k\}_{k\geq 1}$, so this sequence has to stabilize in a finite number of steps.
}\fi
\end{exmp}

\newpage
%\todo[inline]{Include the algorithm computing $\overline{D}p$}
The procedure for computing $\overline{D}_p$ is summarized in~\Cref{a:sequence}.

\begin{algorithm}
\caption{Computing $\overline{D}_p$}
\label{a:sequence}
{\rm
Input: $A\in \R^{n\times n}, p\in [n]$\\
Output: $\overline{D}_p$
\begin{itemize}
\item[1.] {\em Initialisation.} Set $D_p^0=I_n$ (the $n\times n$ max-plus identity matrix). \\ Set $k:=1$. 
\item[2.] {\em Image Approximation.} For each $i,j\in [n]$ compute \[ D_p^{k}(i,j):=\mathtt{MinMaxValue}(D_p^{k-1},A_i,A_j\mid p,n+1-p)\]
%\item[3.] {\em Canonical form.} Compute the Kleene star of $D_p^k$
%\[
%D_p^k:= (D_p^k)^\ast
%\]
\item[3.] {\em Stability Checking.} If $D_p^k=D_p^{k-1}$ return $\overline{D}_p=D_p^{k}$. 
\item[4.] Else set $k:=k+1$ and go to 2.
\end{itemize}
}
\end{algorithm}
\begin{rem}
\label{rem:Dp}
{\rm In general, given any zone $Z\subseteq \R^n$, we can try to check whether $Z\cap E_p(A)\neq \emptyset$ by using the following modification of Algorithm 1. We start by setting $D_p^0$ to the canonical DBM of $Z$ on Step 1 (instead of $I_n$). Then, on Step 2, we not only compute $D_p^k$ but also check whether it is strongly definite. If it is not then we can terminate the algorithm. In case if the algorithm terminates due to Step 3, the resulting zone is an over-approximation of $Z\cap E_p(A)$.}
\end{rem}

\subsection{Active and inactive entries}
\label{ss:active}

Let $x\in\R^n$. If an entry $(i,j)$ satisfies $(A\otimes_{\omega}x)_i=A(i,j)+x_j$, then $(i,j)$ is called an active entry of $A$ with respect to $x$ and $\omega$, and an inactive entry otherwise.

\begin{prop}
\label{LAUactive}
    Let $A\in \R^{n\times n}$, $p\in N$, $x\in \overline{Z}_p$, and let $L$ and $U$
    be two matrices defined by~\eqref{e:LU}.
    If  $i,j\in [n]$ satisfy $A(i,j)=L(i,j)=U(i,j)$, then $(i,j)$ is an active entry of $A$ with respect to $\omega$ and $x$.
\end{prop}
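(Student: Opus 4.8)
The plan is to sandwich the quantity $(A\otimes_{\omega}x)_i-x_j$ between $L(i,j)$ and $U(i,j)$ for every $x\in\overline{Z}_p$, which is essentially the argument in the proof of \Cref{p:LUbounds} but carried out with the column index $j$ kept free instead of being specialised to $j=i$. Since $x\in\overline{Z}_p=\zone{\overline{D}_p}$, the defining inequalities of the zone give, for every $k\in[n]$, both $x_k-x_j\geq\overline{D}_p(k,j)$ (taking the constraint with difference $x_k-x_j$) and $x_j-x_k\geq\overline{D}_p(j,k)$. Adding $A(i,k)$ throughout, the first family yields $A(i,k)+x_k-x_j\geq A(i,k)+\overline{D}_p(k,j)$, and the second yields $A(i,k)+x_k-x_j\leq A(i,k)-\overline{D}_p(j,k)$, for all $k\in[n]$.

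Next I would invoke the monotonicity of the maxmin-$\omega$ operation (it returns the $p$-th smallest element of a multiset, hence is non-decreasing in each argument) together with its additive homogeneity (subtracting the constant $x_j$ from every entry of the multiset $\{A(i,1)+x_1,\ldots,A(i,n)+x_n\}$ lowers its $p$-th smallest element by exactly $x_j$). Applying these to the two families of inequalities above gives
\[
L(i,j)=\bomega\{A(i,k)+\overline{D}_p(k,j)\mid k\in[n]\}\ \leq\ \bomega\{A(i,k)+x_k\mid k\in[n]\}-x_j\ =\ (A\otimes_{\omega}x)_i-x_j
\]
and likewise
\[
(A\otimes_{\omega}x)_i-x_j\ =\ \bomega\{A(i,k)+x_k\mid k\in[n]\}-x_j\ \leq\ \bomega\{A(i,k)-\overline{D}_p(j,k)\mid k\in[n]\}\ =\ U(i,j),
\]
where the identifications of the outer terms with $L(i,j)$ and $U(i,j)$ are just the definitions $L=A\otimes_{\omega}\overline{D}_p$ and $U=A\otimes_{\omega}(-\overline{D}_p)^{\top}$ read off entrywise, using that the $(k,j)$ entry of $(-\overline{D}_p)^{\top}$ is $-\overline{D}_p(j,k)$.

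Finally, under the hypothesis $A(i,j)=L(i,j)=U(i,j)$ the two displayed bounds collapse to $(A\otimes_{\omega}x)_i-x_j=A(i,j)$, i.e.\ $(A\otimes_{\omega}x)_i=A(i,j)+x_j$, which is exactly the definition of $(i,j)$ being an active entry of $A$ with respect to $\omega$ and $x$. I do not expect a genuine obstacle here; the only points needing a little care are (a) matching the entrywise action of $A\otimes_{\omega}(\cdot)$ on $\overline{D}_p$ and on $(-\overline{D}_p)^{\top}$ with the multisets written above, and (b) using monotonicity of $\bomega$ in the correct direction (lower bound from the zone inequalities, upper bound from their transposes) for each of the two estimates.
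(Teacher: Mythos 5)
Your proposal is correct and follows essentially the same route as the paper: both sandwich $(A\otimes_{\omega}x)_i-x_j$ between $L(i,j)$ and $U(i,j)$ using the zone inequalities $x_k-x_j\geq \overline{D}_p(k,j)$ and $x_j-x_k\geq \overline{D}_p(j,k)$ together with monotonicity of $\bomega$, then read off activity from the hypothesis $A(i,j)=L(i,j)=U(i,j)$. The only cosmetic difference is that the paper phrases the final step as a proof by contradiction while you conclude directly.
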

\begin{proof}
Since $x\in\overline{Z}_p$, we have
\begin{align}
\nonumber
        (A\otimes_{\omega} x)_i-x_j&=\bomega \{A(i,1)+x_1-x_j,\ldots,A(i,n)+x_n-x_j\}\\
        \nonumber
        &\geq \bomega \{A(i,1)+\overline{D}_p(1,j),\ldots,A(i,n)+\overline{D}_p(n,j) \}\\
        \label{eq:AL}
        &=L(i,j)
    \end{align}
    and, similarly,
    \begin{align}
    \nonumber
     (A\otimes_{\omega} x)_i-x_j&=\bomega \{A(i,1)+x_1-x_j,\ldots,A(i,n)+x_n-x_j\}\\
     \nonumber
    &\leq\bomega \{A(i,1)-\overline{D}_p(j,1),\ldots,A(i,n)-\overline{D}_p(j,n) \}\\
    \label{eq:AU}
        &=U(i,j).
    \end{align}
Therefore, if $L(i,j)=U(i,j)$ then 
\begin{equation}
  \label{e:LUactive}
   L(i,j)= (A\otimes_{\omega} x)_i-x_j= U(i,j). 
\end{equation}
 By contradiction, assume that $(i,j)$ is an inactive entry. However, then $A(i,j)\neq (A\otimes_{\omega} x)_i-x_j$, which contradicts \Cref{e:LUactive}. Thus $(i,j)$ is an active entry.
\end{proof}

\begin{defn}
\label{def:active-element}
    {\rm Let a matrix $A\in \R^{n\times n}$ and $\omega=p/n$ for $p\in [n]$ be given. The {\em strongly active matrix} with respect to $A$ and $\omega$ is an $n\times n$ matrix whose elements are
    \[
    T(i,j)=\left\{\begin{array}{ll}
         A(i,j), &  ~\text{if}~A(i,j)=L(i,j)=U(i,j),\\
         \infty, & \text{otherwise},
    \end{array}\right.
    \]
    where $L$ and $U$ are defined by \eqref{e:LU}. The {\em strongly active graph} with respect to $A$ and $\omega$, denoted by $\Act(A,\omega)$, is a directed graph whose node set is $[n]$ and whose arcs are $(i,j)$ whenever $A(i,j)=L(i,j)=U(i,j)$.}
\end{defn}
In what follows we assume that if $T$ is multiplied by a vector in the max-plus sense (respectively, in the min-plus sense) then the ``$\infty$'' entries are treated as $-\infty$'s (respectively, as $+\infty$'s). 

\if{
In case there are at least one finite element in each row of $T$ then we can easily find the eigenvalue $\lambda_p(A)$ from the resulting $T$ and $\act[A][p/n]$. The eigenvalue $\lambda_p$ is indeed the cycle mean in $\act[A][p/n]$ Furthermore, $\lambda_p(A)=\lambda_1(T)=\lambda_n(T)$ where $\lambda_1(T)$ (resp. $\lambda_n(T)$) is computed by treating the infinite element as $+\infty$ (resp. $-\infty$).
}\fi

\begin{prop}
\label{prop:active-element}
    Let $A\in \R^{n\times n}$ and $p\in [n]$. Suppose that $T$ and
    $\Act(A,p/n)$ be, respectively, the strongly active  matrix and the strongly active graph with respect to $A$ and $p/n$. If there is at least one finite element in each row of $T$, then:
\begin{itemize}
    \item[(i)] $\lambda_p(A)$ is the only cycle mean in $\Act(A,p/n)$,
    \item[(ii)] for each $x\in \overline{Z}_p$, $A\otimes_{p/n}x = T\otimes x=T\otimes^\prime x$, 
    \item[(iii)] for each $x\in E_p(A)$, $\Act(A,p/n)$ is a subgraph of $\Sat(A,p/n,x)$.
\end{itemize}
\end{prop}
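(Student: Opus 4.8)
The plan is to leverage Proposition \ref{LAUactive} together with the bounds from Proposition \ref{p:LUbounds} to squeeze the maxmin-$\omega$ value of any $x\in\overline{Z}_p$ into a single quantity controlled by $T$. First I would establish part (ii). Fix $x\in\overline{Z}_p$ and $i\in[n]$. By Proposition \ref{LAUactive}, every arc $(i,j)$ of $\Act(A,p/n)$ is active, i.e. $(A\otimes_\omega x)_i=A(i,j)+x_j=T(i,j)+x_j$; by hypothesis row $i$ of $T$ has a finite entry, so such a $j$ exists. For the non-arcs $(i,j)$ the entry $T(i,j)$ is ``$\infty$'', treated as $-\infty$ in the max-plus product and as $+\infty$ in the min-plus product; since the maxmin-$\omega$ value $(A\otimes_\omega x)_i$ is attained at some active $j$, it equals both $\max_j (T(i,j)+x_j)=(T\otimes x)_i$ and $\min_j(T(i,j)+x_j)=(T\otimes' x)_i$. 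Running $i$ over $[n]$ gives $A\otimes_{p/n}x=T\otimes x=T\otimes' x$, which is (ii).

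Next I would prove part (iii). If $x\in E_p(A)$, then $x\in\overline{Z}_p$ because $E_p(A)\subseteq\overline{Z}_p$, and $(A\otimes_\omega x)_i=\lambda_p+x_i$ for all $i$. For any arc $(i,j)\in\Act(A,p/n)$, part (ii) (or directly Proposition \ref{LAUactive}) gives $(A\otimes_\omega x)_i=A(i,j)+x_j$, hence $A(i,j)+x_j=\lambda_p+x_i=(A\otimes_\omega x)_i$, which is precisely the defining condition for $(i,j)\in\Sat(A,p/n,x)$. Thus $\Act(A,p/n)$ is a subgraph of $\Sat(A,p/n,x)$, giving (iii).

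Finally, part (i). Since $f^{(p)}$ has a finite eigenvector $x$ (Proposition \ref{p:lambdap}), apply (iii): $\Act(A,p/n)$ contains every arc of $\Sat(A,p/n,x)$ that lies in it, and more importantly, since every row of $T$ has a finite entry, every node of $\Act(A,p/n)$ has an outgoing arc, so following a path yields a cycle; along such a cycle $(i_1,\ldots,i_k)$ each arc satisfies $A(i_\ell,i_{\ell+1})+x_{i_{\ell+1}}=\lambda_p+x_{i_\ell}$ (as the arcs are active and $x$ is an eigenvector), and summing and cancelling the $x$'s shows its mean weight is $\lambda_p$. Conversely, for any cycle $(i_1,\ldots,i_k)$ in $\Act(A,p/n)$ and this same $x$, each arc is active so $A(i_\ell,i_{\ell+1})+x_{i_{\ell+1}}=(A\otimes_\omega x)_{i_\ell}=\lambda_p+x_{i_\ell}$; summing and cancelling again shows the mean weight equals $\lambda_p$. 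Hence $\lambda_p(A)$ is the only cycle mean in $\Act(A,p/n)$, proving (i).

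I expect the main obstacle to be the bookkeeping in part (ii): one must carefully justify that the ``$\infty$'' entries of $T$ do not interfere, i.e. that the maxmin-$\omega$ operation restricted to the active positions genuinely collapses to \emph{both} a pure $\max$ and a pure $\min$ over the $T$-entries. The key point is that Proposition \ref{LAUactive} forces $(A\otimes_\omega x)_i-x_j$ to equal $L(i,j)=U(i,j)$ for every active $(i,j)$, so all active entries $A(i,j)+x_j$ for a fixed $i$ take the \emph{same} value $(A\otimes_\omega x)_i$; the convention on ``$\infty$'' then makes the max and min over all $j$ agree with this common value. Everything else — the cycle-mean computations in (i) and (iii) — is the standard telescoping argument already used in the proof of Proposition \ref{p:satcycle}.
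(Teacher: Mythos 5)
Your proposal is correct and follows essentially the same route as the paper's proof: part (ii) collapses the maxmin-$\omega$ value onto the common value of the active entries via Proposition \ref{LAUactive}, part (iii) combines that with $E_p(A)\subseteq\overline{Z}_p$ and the eigenvector equation, and part (i) uses the existence of a finite eigenvector plus the standard telescoping cycle-mean argument over any cycle of $\Act(A,p/n)$. The only differences are cosmetic (order of the parts and the explicit bookkeeping of the ``$\infty$'' convention, which the paper leaves implicit).
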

\begin{proof}
    (i) The condition that all rows of $T$ contain a finite element ensures the existence of the cycle mean. Each cycle $(i_1,i_2,\ldots,i_k)$ in $\Act(A,p/n)$ has the cycle mean equal to $\lambda_p(A)=(A(i_1,i_2)+\ldots+A(i_k,i_1)/k$, since all the arcs in this cycle are active for any $x\in E_p(A)$.\\  
    (ii) Suppose there are $k>0$ finite elements in each row of $T$: $(i,j_1),\ldots, (i,j_k)$ for $i\in [n]$. By \Cref{LAUactive}, 
    \begin{equation*}
    \begin{split}
    (A\otimes_{p/n}x)_i &=A(i,j_1)+x_{j_1}=\cdots= A(i,j_k)+x_{j_k}\\
    &=\max\{A(i,j_1)+x_{j_1},\ldots,A(i,j_k)+x_{j_k}\}\\
    &=\min\{A(i,j_1)+x_{j_1},\ldots,A(i,j_k)+x_{j_k}\}.
    \end{split}
    \end{equation*}
    (iii) Suppose that $(i,j)\in \Act(A,p/n)$. Then, by \Cref{LAUactive}, $(A\otimes_{p/n} x)_i= A(i,j)+x_j$ for all $x\in \overline{D}_p$. Furthermore, since the eigenvalue is already known, we have $(A\otimes_{p/n} x)_i=x_i+\lambda_p$ for $x\in E_p(A)$. The inclusion relation $E_p(A)\subseteq \zone{\overline{D}_p}$ implies $(i,j)\in \Sat(A,p/n,x)$.
\end{proof}
%\todo[inline]{The proof of the next result is not sufficient..}

The following result indicates the direct consequence of finding strongly active element matrix $T$ whose all rows contain finite elements. In this case it follows that the max-plus and min-plus linear combinations of eigenvectors are also eigenvectors. This feature in \Cref{cor:eigen-combin} may not hold if there is a row where $T$ does not contain a finite entry, as shown in \Cref{exmp:interesting}.
\begin{cor}
\label{cor:eigen-combin}
    Let $A\in \R^{n\times n}$ and $p\in [n]$. Suppose that $T$ is the strongly active matrix. If there are at least one finite element in each row of $T$, then for each $x,y\in E_p(A)$ we have $(\alpha \otimes x)\oplus (\beta \otimes y)\in E_p(A) $ and $(\alpha \otimes x)\oplus^\prime (\beta \otimes y)\in E_p(A)$ for each $\alpha,\beta\in \R$.
\end{cor}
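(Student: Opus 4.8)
The plan is to combine two ingredients: first, that $\overline{Z}_p$, being a zone, is closed under both max-plus and min-plus linear combinations; and second, the identity $A\otimes_{p/n}x=T\otimes x=T\otimes^\prime x$ valid for all $x\in\overline{Z}_p$ from \Cref{prop:active-element}(ii), which lets us replace the nonlinear operator $A\otimes_{p/n}$ on $\overline{Z}_p$ by an operator that is genuinely linear — max-plus linear in the first guise, min-plus linear in the second.

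First I would record the closure property of a zone $Z=\zone{D}$. Take $u,v\in Z$ and $\alpha,\beta\in\R$. For $w=(\alpha\otimes u)\oplus(\beta\otimes v)$ and any $i,j$, one checks $w_i-w_j\geq D(i,j)$ by cases on which term attains $w_j$: if $w_j=\alpha+u_j$ then $w_i-w_j\geq(\alpha+u_i)-(\alpha+u_j)=u_i-u_j\geq D(i,j)$, and if $w_j=\beta+v_j$ then similarly $w_i-w_j\geq v_i-v_j\geq D(i,j)$; hence $w\in Z$. The same computation, now casing on which term attains $w'_i$ for $w'=(\alpha\otimes u)\oplus^\prime(\beta\otimes v)$ and using $w'_j\leq\alpha+u_j$ (resp.\ $w'_j\leq\beta+v_j$), gives $w'_i-w'_j\geq D(i,j)$, so $w'\in Z$. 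Applying this with $Z=\overline{Z}_p$, $u=x$, $v=y$, where $x,y\in E_p(A)\subseteq\overline{Z}_p$, we obtain $z:=(\alpha\otimes x)\oplus(\beta\otimes y)\in\overline{Z}_p$ and $z':=(\alpha\otimes x)\oplus^\prime(\beta\otimes y)\in\overline{Z}_p$.

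Next, since $x,y\in\overline{Z}_p$, \Cref{prop:active-element}(ii) together with the eigenvector property gives $T\otimes x=T\otimes^\prime x=A\otimes_{p/n}x=\lambda_p\otimes x$, and likewise for $y$. Using that $T\otimes(\cdot)$ is max-plus linear (with the ``$\infty$'' entries of $T$ treated as $-\infty$, making $T$ an honest max-plus matrix) and \Cref{prop:active-element}(ii) applied to $z\in\overline{Z}_p$, we get
\begin{align*}
A\otimes_{p/n}z=T\otimes z&=\big(\alpha\otimes(T\otimes x)\big)\oplus\big(\beta\otimes(T\otimes y)\big)\\
&=\lambda_p\otimes\big((\alpha\otimes x)\oplus(\beta\otimes y)\big)=\lambda_p\otimes z,
\end{align*}
so $z\in E_p(A)$. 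Symmetrically, using that $T\otimes^\prime(\cdot)$ is min-plus linear (the ``$\infty$'' entries of $T$ now treated as $+\infty$) and \Cref{prop:active-element}(ii) applied to $z'$,
\begin{align*}
A\otimes_{p/n}z'=T\otimes^\prime z'&=\big(\alpha\otimes(T\otimes^\prime x)\big)\oplus^\prime\big(\beta\otimes(T\otimes^\prime y)\big)\\
&=\lambda_p\otimes z',
\end{align*}
so $z'\in E_p(A)$, completing the argument.

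The only delicate points — rather than genuine obstacles — are (a) the closure of a zone under \emph{min-plus} combinations, which is perhaps less familiar than closure under max-plus combinations and needs the short case analysis above, and (b) keeping track of the two opposite sign conventions for the ``$\infty$''-entries of $T$ so that $T\otimes(\cdot)$ and $T\otimes^\prime(\cdot)$ are each linear in the appropriate semiring; once these are in place the conclusion follows by direct substitution. (Note that the hypothesis that every row of $T$ contains a finite entry is exactly what \Cref{prop:active-element}(ii) needs, so it cannot be dropped — as \Cref{exmp:interesting} is meant to illustrate.)
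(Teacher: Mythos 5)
Your proposal is correct and follows essentially the same route as the paper's own proof: closure of the zone $\overline{Z}_p$ under max-plus and min-plus combinations, followed by the linearization $A\otimes_{p/n}(\cdot)=T\otimes(\cdot)=T\otimes^\prime(\cdot)$ on $\overline{Z}_p$ from \Cref{prop:active-element}(ii). The only difference is that you spell out the zone-closure case analysis and the sign conventions for the $\infty$-entries of $T$, which the paper leaves implicit.
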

\begin{proof}
    Since $\overline{Z}_p$ is a zone, it is stable both with respect to max-plus linear combinations and with respect to min-plus linear combinations. Thus if $x,y\in \overline{Z}_p$ then $(\alpha \otimes x)\oplus (\beta \otimes y)\in \zone{\overline{D}_p}$ and $(\alpha \otimes x)\oplus^\prime (\beta \otimes y)\in \zone{\overline{D}_p}$ for each $\alpha,\beta\in\R$. Due to \Cref{LAUactive} and \Cref{prop:active-element} part (ii)
    \begin{align*}
    &A\otimes_{\omega} ((\alpha \otimes x)\oplus (\beta \otimes y))=T\otimes ((\alpha \otimes x)\oplus (\beta \otimes y))   \\
    &=(\alpha \otimes T\otimes x) \oplus (\beta\otimes T\otimes y)=\lambda_p+(\alpha\otimes x\oplus\beta\otimes y),
    \end{align*}
    and the min-plus linearity is shown similarly.
    %In the case when $x$ and $y$ are eigenvectors w.r.t. eigenvalue $\lambda_p$
\end{proof}

The following example shows the use of \Cref{LAUactive} and \Cref{prop:active-element}.

\begin{exmp}
\label{ex:activeelts}
    Based on \Cref{ex:Ds}, we have $A(i,j)=L(i,j)=U(i,j)$ for $(i,j)\in \{(1,4),(2,3),(3,2),(4,1)\}$. Hence for each $x\in \overline{Z_2}$
    we obtain
    \[
    A\otimes_\omega x=\begin{bmatrix}
        x_4+2\\x_3+2\\x_2+9\\x_1+9
    \end{bmatrix}=
    \begin{bmatrix}
        \cdot&\cdot&\cdot&2\\
        \cdot&\cdot&2&\cdot\\
        \cdot&9&\cdot&\cdot\\
        9&\cdot&\cdot&\cdot\\
    \end{bmatrix}
    \otimes 
    \begin{bmatrix}
        x_1\\x_2\\x_3\\x_4
    \end{bmatrix}
    \]
    From this, the only possible eigenvalue is $\lambda=11/2$. Furthermore, the eigenspace can be over-approximated as
    \[
    E_2(A)\subseteq \zone{\overline{D}_2}\cap \{x\in \R^4 \mid x_4-x_1=7/2,x_3-x_2=7/2\}.
    \]
% In particular, $E_2(A)$ is itself a zone. 
\end{exmp}
\if{Notice that, from Example 2.24, we already found the active element in each row $A$ and then we can construct the so-called active element matrix $I$. From this matrix, we can find the eigenvalue $\lambda_p(A)=\lambda_1(I)=\lambda_n(I)$ and the ``candidate'' of eigenspace
Note that if we, like in \Cref{ex:activeelts}, manage to find an active entry in every row, then we can characterize the eigenspace as the zone defined by
\begin{equation}
\label{e:eigenzone}
E_p(A)\subseteq\overline{Z}_p\cap \{x\mid A(i,j)+x_j=\lambda_p+x_i,\; (i,j)\; \text{is active}\}.
\end{equation}
}\fi
The previous example suggests the following over-approximation of $E_p(A)$ also in general case:
\begin{equation}
\label{e:eigenzone}
E_p(A)\subseteq\overline{Z}_p\cap \{x\mid A(i,j)+x_j=\lambda_p+x_i,\; (i,j)\in\Act(A,p/n)\}.
\end{equation}

\if{
However, it is possible that from matrices $A,L,U$, we are not able to find the active element matrix $I$ yet. This is because, there may be not active element in a row of $A$.
}\fi
%If this is not the case, then the right hand side of \Cref{e:eigenzone} gives us a tighter over-approximation of $E_p(A)$.
%\todo[inline]{I don't understand what it means: check}
%This section discusses the additional procedure when the matrices $L$ and $U$ in \eqref{e:LU} do not provide the complete active elements yet. 
%Our aim here is to find the inactive elements instead. 

The entries $(i,j)$ for which the corresponding arc belongs to $\Act(A,p/n)$ are guaranteed to be active. We next search for conditions that can eliminate some inactive entries. We start by presenting a bound on $\lambda_p$ resulting from $(i,j)$ being an active entry of $A$ with respect to 
$x\in E_p(A)$.
\begin{prop}
\label{p:PQbound}
     Let $A\in \R^{n\times n}$ and $p\in [n]$. If $(i,j)$ is an active entry of $A$ with respect to some $x\in E_p(A)$ and $\omega=p/n$, then 
     $$
A(i,j)+\overline{D}_p(j,i)\leq \lambda_p\leq A(i,j)-\overline{D}_p(i,j).
$$
\end{prop}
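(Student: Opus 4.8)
The plan is to unwind the definition of an active entry together with the eigenvector equation and then apply the zone membership $E_p(A)\subseteq\overline{Z}_p=\zone{\overline{D}_p}$.

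First I would fix $x\in E_p(A)$ such that $(i,j)$ is active with respect to $x$ and $\omega=p/n$. By the definition of an active entry this means $(A\otimes_{\omega}x)_i=A(i,j)+x_j$. On the other hand, since $x$ is a maxmin-$\omega$ eigenvector associated with $\lambda_p$, we have $(A\otimes_{\omega}x)_i=\lambda_p+x_i$. Equating the two expressions and rearranging gives the key identity
\[
\lambda_p=A(i,j)+x_j-x_i=A(i,j)-(x_i-x_j).
\]

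Next I would invoke the fact that $E_p(A)\subseteq\overline{Z}_p$ (established earlier), so $x$ satisfies all the defining constraints of $\zone{\overline{D}_p}$; in particular $x_i-x_j\geq\overline{D}_p(i,j)$ and $x_j-x_i\geq\overline{D}_p(j,i)$. Substituting the first of these into the identity above yields $\lambda_p=A(i,j)-(x_i-x_j)\leq A(i,j)-\overline{D}_p(i,j)$, and substituting the second yields $\lambda_p=A(i,j)+(x_j-x_i)\geq A(i,j)+\overline{D}_p(j,i)$. Combining the two inequalities gives exactly the claimed sandwich
\[
A(i,j)+\overline{D}_p(j,i)\leq\lambda_p\leq A(i,j)-\overline{D}_p(i,j).
\]

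There is essentially no serious obstacle here: the argument is a direct three-line computation. The only point requiring care is to remember that the bounds on $x_i-x_j$ come from the over-approximation inclusion $E_p(A)\subseteq\overline{Z}_p$ together with the DBM representation $\overline{D}_p$ of $\overline{Z}_p$, rather than from any direct property of $x$ itself; everything else is just arithmetic rearrangement of the eigenvalue identity.
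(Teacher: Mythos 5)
Your proof is correct and follows essentially the same route as the paper's: both derive the identity $\lambda_p=A(i,j)+x_j-x_i$ from the active-entry condition and the eigenvector equation, then bound $x_j-x_i$ on both sides via the inclusion $E_p(A)\subseteq\zone{\overline{D}_p}$, i.e.\ $\overline{D}_p(j,i)\leq x_j-x_i\leq -\overline{D}_p(i,j)$. No gaps; nothing further to add.
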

\begin{proof}
    Suppose that $(i,j)$ is an active element with respect to some $x\in E_p(A)$. Then we have $x_j+A(i,j)=\lambda_p+x_i$ which implies $\lambda_p=A(i,j)+x_j-x_i$. Since $x\in\zone{\overline{D}_p}$, we have $\overline{D}_p(j,i)\leq x_j-x_i\leq -\overline{D}_p(i,j)$. Adding $A(i,j)$ to all sides of these inequalities and substituting $\lambda_p=A(i,j)+x_j-x_i$ completes the proof.
\end{proof}

Combining this bound with the bound using $L$ and $U$ matrices defined in \Cref{e:LU}, the next corollary describes a case when we can conclude that a given entry is inactive.
\begin{prop}
\label{ALUinactive}
    Let $A\in \R^{n\times n}$ and $p\in [n]$. Let the matrices $L$ and $U$ be as in~\Cref{e:LU}.
     Then, any $(i,j)$ that satisfies $A(i,j)<L(i,j)$ or $A(i,j)>U(i,j)$ is inactive with respect to $\omega=p/n$ and any $x\in E_p(A)$.
\end{prop}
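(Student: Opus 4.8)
The plan is to obtain this statement directly by reading the bounds already established inside the proof of \Cref{LAUactive} in the contrapositive direction. Recall that inequalities~\eqref{eq:AL} and~\eqref{eq:AU} were derived there for an \emph{arbitrary} $x\in\overline{Z}_p$, and together they assert precisely that
\[
L(i,j)\;\leq\;(A\otimes_{\omega}x)_i-x_j\;\leq\;U(i,j)
\]
for all $i,j\in[n]$ and all $x\in\overline{Z}_p$. Since $E_p(A)\subseteq\overline{Z}_p$, these two-sided bounds hold in particular for every maxmin-$\omega$ eigenvector $x\in E_p(A)$, and this is the only ingredient I would reuse from earlier in the paper.

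Next I would invoke the definition of an active entry. If $(i,j)$ were active with respect to $\omega=p/n$ and some $x\in E_p(A)$, then by definition $(A\otimes_{\omega}x)_i=A(i,j)+x_j$, i.e.\ $(A\otimes_{\omega}x)_i-x_j=A(i,j)$. Substituting this equality into the displayed two-sided bound gives $L(i,j)\leq A(i,j)\leq U(i,j)$. Taking the contrapositive: whenever $A(i,j)<L(i,j)$ or $A(i,j)>U(i,j)$, there is no $x\in E_p(A)$ with respect to which $(i,j)$ is active, which is exactly the claim that $(i,j)$ is inactive with respect to $\omega$ and every $x\in E_p(A)$.

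I do not expect any genuine obstacle here: the work has effectively been done in~\eqref{eq:AL}--\eqref{eq:AU} together with the inclusion $E_p(A)\subseteq\overline{Z}_p$, and the only new observation is the trivial one that activeness pins the quantity $(A\otimes_{\omega}x)_i-x_j$ down to the exact value $A(i,j)$. For completeness I would remark that one could instead route the argument through \Cref{p:PQbound} and \Cref{p:LUbounds}, combining the bound $A(i,j)+\overline{D}_p(j,i)\leq\lambda_p\leq A(i,j)-\overline{D}_p(i,j)$ (valid when $(i,j)$ is active for some $x\in E_p(A)$) with the bounds on $\lambda_p$ coming from the diagonals of $L$ and $U$; however, that detour is longer and the derivation via~\eqref{eq:AL}--\eqref{eq:AU} is the cleanest, so that is the one I would present.
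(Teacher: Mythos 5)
Your proposal is correct and follows essentially the same route as the paper: both rely on the two-sided bounds~\eqref{eq:AL}--\eqref{eq:AU} valid for all $x\in\overline{Z}_p$ together with the inclusion $E_p(A)\subseteq\overline{Z}_p$, the only cosmetic difference being that you phrase the conclusion as a contrapositive of ``active implies $L(i,j)\leq A(i,j)\leq U(i,j)$'' while the paper argues directly that $A(i,j)<L(i,j)$ forces $(A\otimes_{\omega}x)_i>A(i,j)+x_j$ (and dually for $U$). No gap.
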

\begin{proof}
    By \Cref{eq:AL} we have $(A\otimes_{\omega}x)_i-x_j\geq L(i,j)$ for each $x\in \overline{Z}_p$. The condition $A(i,j)<L(i,j)$ implies that $(A\otimes_{\omega}x)_i>A(i,j)+x_j$. Similarly, by \Cref{eq:AU}, if $A(i,j)>U(i,j)$ then $(A\otimes_{\omega}x)_i<A(i,j)+x_j$.
    Hence, for either case, $(i,j)$ is inactive for each $x\in \overline{Z}_p$. The inclusion relation $E_p(A)\subseteq \overline{Z}_p$ completes the proof.
\end{proof}
\begin{prop}
\label{LUPQinactive}
    Let $A\in R^{n\times n}$ and $p\in [n]$. Let the matrices $L$ and $U$ be as in~\Cref{e:LU} and let 
    $l=\max\{L(1,1),\ldots, L(n,n)\}$ and $u=\min\{U(1,1),\ldots,U(n,n)\}$.
    Furthermore, define $P$ and $Q$ by
    \begin{equation}
    \label{e:PQ}
    P=A+\overline{D}_p^\top,~\text{and}~Q=A-\overline{D}_p,
    \end{equation}
     Then, any $(i,j)$ that satisfies $P(i,j)>u$ or $Q(i,j)<l$ is inactive with respect to $\omega=p/n$ and any $x\in E_p(A)$.
\end{prop}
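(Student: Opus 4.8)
The plan is to combine the two independent bounds on $\lambda_p$ that have already been established: the $L/U$ bound of \Cref{p:LUbounds} and the $P/Q$ bound of \Cref{p:PQbound}. First I would rewrite the conclusion of \Cref{p:PQbound} in terms of the matrices introduced here: since $P=A+\overline{D}_p^\top$ means $P(i,j)=A(i,j)+\overline{D}_p(j,i)$ and $Q=A-\overline{D}_p$ means $Q(i,j)=A(i,j)-\overline{D}_p(i,j)$, the statement ``$(i,j)$ active with respect to some $x\in E_p(A)$'' forces $P(i,j)\le\lambda_p\le Q(i,j)$. Meanwhile \Cref{p:LUbounds} gives $l\le\lambda_p\le u$ for the quantities $l=\max_i L(i,i)$ and $u=\min_i U(i,i)$ unconditionally (and, if $E_p(A)=\emptyset$, the proposition is vacuous, so I may assume $\lambda_p$ exists and is attained by some eigenvector).

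The argument is then by contraposition. Assume $(i,j)$ is active with respect to some $x\in E_p(A)$. Chaining the two displays, $P(i,j)\le\lambda_p\le u$ and $l\le\lambda_p\le Q(i,j)$, hence $P(i,j)\le u$ and $Q(i,j)\ge l$. Consequently, if instead $P(i,j)>u$ or $Q(i,j)<l$, then no $x\in E_p(A)$ can make $(i,j)$ active, i.e. $(i,j)$ is inactive with respect to $\omega=p/n$ and every $x\in E_p(A)$. This is exactly the assertion.

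The only real obstacle is bookkeeping rather than mathematics: one must be careful that the transpose conventions in $P=A+\overline{D}_p^\top$ and $Q=A-\overline{D}_p$ align with the asymmetric roles of $\overline{D}_p(j,i)$ (in the lower bound) and $-\overline{D}_p(i,j)$ (in the upper bound) appearing in \Cref{p:PQbound}, so that the inequalities are chained in the correct direction. No new estimates on $\overline{D}_p$, $L$, $U$ or on $A\otimes_\omega$ are needed beyond the two cited propositions, so the proof is short.
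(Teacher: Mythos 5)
Your argument is correct and is essentially the paper's own proof: both combine \Cref{p:LUbounds} ($l\le\lambda_p\le u$) with \Cref{p:PQbound} ($P(i,j)\le\lambda_p\le Q(i,j)$ for an active $(i,j)$) and conclude that the two intervals cannot be disjoint, the only cosmetic difference being that you phrase it as contraposition where the paper argues by contradiction. No gap.
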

\begin{proof}
By \Cref{p:LUbounds} we have $l\leq \lambda_p\leq u$. 
By contradiction, assume that $(i,j)$ is active for some $x\in E_p(A)$. Then by \Cref{p:PQbound} we also have $P(i,j)\leq\lambda_p\leq Q(i,j)$. But this is not possible when we have $P(i,j)>u$ or $Q(i,j)<l$.      
\end{proof}

Following \Cref{def:active-element}, we also define a possibly active element matrix.

\begin{defn}
\label{def:possibly-active}
    {\rm
    Let a matrix $A\in \R^{n\times n}$ and $\omega=p/n$ for $p\in [n]$ be given. The {\em possibly active matrix} with respect to ${A}$ and $\omega$ is an $n\times n$ matrix whose entries are
    \[
    \hat{A}(i,j)=\left\{\begin{array}{ll}
            &  ~\text{if}~A(i,j)<L(i,j)~\text{or}~ A(i,j)>U(i,j) ~\text{or}\\
         \infty, &  ~~~~\text{or}~ P(i,j)>\min_{i\in [n]}\{U(i,i)\}\\
         &   ~~~~\text{or}~ Q(i,j)<\min_{i\in [n]}\{L(i,i)\},\\
         A(i,j), & \text{otherwise},
    \end{array}\right.
    \]
    where $L$ and $U$ are defined by \eqref{e:LU} and $P$ and $Q$ are defined by \eqref{e:PQ}. The {\em possibly active graph} with respect to $A$ and $\omega$ denoted by $\Pos(A,\omega)$, is a directed graph whose node set is $[n]$ and  the arcs of which are $(i,j)$ whenever $\hat{A}(i,j)$ is finite.}
\end{defn}

% and \Cref{} The following algorithm summarizes the steps to obtain the active and possibly active elements matrices.
% \begin{algorithm}
% \caption{Finding the active elements matrix $T$ and possibily active elements matrix $\hat{A}$}
% {\rm
% Input: $A\in \R[n][n],p\in [n]$, a stabilized zone $\overline{D}_p$\\
% Output: $T$ and $\hat{A}$
% \begin{enumerate}
% % \item Compute $\overline{D}_p$ by Algorithm 1.
% \item Compute matrices $L$ and $U$ by \eqref{e:LU}; and $P$ and $Q$ by \eqref{e:PQ}. 
% \item Compute $l:=\max\{L(1,1),\ldots,L(n,n)\}$ and \\$u=\min\{U(1,1),\ldots,U(n,n)\}$.
% \item Set $T:=\mathtt{Inf}(n,n)$ and $\hat{A}:=A$
% \item For each $(i,j)\in [n]\times [n]$ do
% \begin{itemize}
%     \item[(i)] if $A(i,j)=L(i,j)=U(i,j)$ then $T(i,j):=A(i,j)$
%     \item[(ii)] if $A(i,j)<L(i,j)$ or $A(i,j)>U(i,j)$ or $P(i,j)>u$ or $Q(i,j)<l$ then $\hat{A}(i,j):=\inf $
% \end{itemize}
% \item For each $i\in [n]$ do
% \begin{itemize}
%     \item[(i)] if $T(i,\cdot)$ contains finite element then $\hat{A}(i,\cdot):=T(i,\cdot)$; \\
%     else if $\hat{A}(i,j)$ has only one finite element then $T(i,\cdot):=\hat{A}(i,\cdot)$
% \end{itemize}
% \item return $T$ and $\hat{A}$
% \end{enumerate}
% }
% \end{algorithm}
\begin{exmp}
\label{ex:refinement}
\normalfont
    Suppose we have a matrix
    \[
    A=\begin{bmatrix}
        14 & 2 &18&  5\\
        5 & 2 & 3 &14\\
        1& 13& 12&  0\\
        19 & 6 &14& 12\\
    \end{bmatrix}.
    \]
    For $p=2$ we compute
    \begin{center}
    $\overline{D}_2\!=\!\begin{bmatrix}
        0&6&3&-5\\
        -12&0&-7&-11\\
        -5&1&0&-10\\
        -1&11&4&0
    \end{bmatrix}\!,
    L\!=\!\begin{bmatrix}
        4&16&9&5\\
        -2&4&3&-7\\
        1&11&4&0\\
        9&15&14&4
    \end{bmatrix}\!, U\!=\!\begin{bmatrix}
        10&16&15&5\\
        0&10&3&-1\\
        5&13&10&2\\
        11&21&14&10
    \end{bmatrix}$\\
    $P=\begin{bmatrix}
        14&-10&13&4\\
        11&2&4&25\\
        4&6&12&4\\
        14&-5&4&12
    \end{bmatrix},
    Q=\begin{bmatrix}
        14&-4&15&10\\
        17&2&10&25\\
        6&12&12&10\\
        20&-5&10&12
    \end{bmatrix}.$
    \end{center}
    
    Using $A,L$ and $U$ and \Cref{LAUactive}, entries $(1,4),(2,3)$ and $(4,3)$ are active with respect to any $x\in\overline{Z}_2=\zone{\overline{D}_2}$. We can also compute $l=4$ and $u=10$ and find some inactive entries of $A$ using \Cref{LUPQinactive}.
    This gives us the following matrix of possibly active entries of $A$
    \[
    \hat{A}=\begin{bmatrix}
        \cdot & \cdot &\cdot&  \underline{5}\\
        \cdot & \cdot & \underline{3} &\cdot\\
        1& 13& \cdot&  0\\
        \cdot & \cdot &\underline{14}& \cdot\\
    \end{bmatrix}.
    \]
    The entries which are known to be active are underlined.
\end{exmp}

The following proposition describes the consequence of finding such inactive elements w.r.t. the lower and upper bounds for the eigenvalue $\lambda_p$.
\begin{prop}
 \label{prop:possibly-active-element}
    Let $A\in \R^{n\times n}$ and $p\in [n]$. Let $\hat{A}$ and
    $\Pos(A,p/n)$ be, respectively, the possibly active matrix and the possibly active graph with respect to $A$ and $p/n$. Then %If there are at least one finite element in each row of $T$, then the following condition holds
\begin{itemize}
    \item[(i)] for each $x\in E_p(A)$, $\Sat(A,p/n,x)$ is a subgraph of $\Pos(A,p/n)$,
    \item[(ii)] $\lambda_p(A)$ is a cycle mean in $\Pos(A,p/n)$,
   \item[(iii)] $\lambda_1(A)\leq \lambda_1(\hat{A})\leq \lambda_p(A)\leq \lambda_n(\hat{A})\leq \lambda_n(A)$,
\end{itemize}
\end{prop}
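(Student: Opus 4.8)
The plan is to establish the three claims in the natural order, relying throughout on the saturation-graph machinery of \Cref{p:satcycle} and on the inactivity criteria of \Cref{ALUinactive} and \Cref{LUPQinactive}.

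\emph{Part (i).} First I would take an arbitrary $x\in E_p(A)$ and an arbitrary arc $(i,j)\in\Sat(A,p/n,x)$, so that $A(i,j)+x_j=(A\otimes_{p/n}x)_i=\lambda_p+x_i$, i.e.\ $(i,j)$ is active with respect to $x$ and $\omega=p/n$. I must show $\hat A(i,j)$ is finite, i.e.\ that none of the four disqualifying conditions in \Cref{def:possibly-active} holds. The conditions $A(i,j)<L(i,j)$ and $A(i,j)>U(i,j)$ are ruled out directly by \Cref{ALUinactive} (an active entry w.r.t.\ some $x\in E_p(A)$ cannot satisfy either). The conditions $P(i,j)>u$ and $Q(i,j)<l$ are ruled out by \Cref{LUPQinactive} for exactly the same reason. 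Hence $(i,j)\in\Pos(A,p/n)$, proving that $\Sat(A,p/n,x)$ is a subgraph of $\Pos(A,p/n)$.

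\emph{Part (ii).} By \Cref{p:lambdap} there is an eigenvector $x\in E_p(A)$ with full support, and by \Cref{p:satcycle}(ii) the graph $\Sat(A,p/n,x)$ contains a cycle $(i_1,\ldots,i_k)$ whose mean weight equals $\lambda_p$. By part (i) this cycle lies entirely in $\Pos(A,p/n)$, so $\lambda_p(A)$ is a cycle mean in $\Pos(A,p/n)$, and on that cycle the arc weights of $\hat A$ coincide with those of $A$ (they are finite), so it is also a cycle of $\hat A$ realizing mean $\lambda_p$.

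\emph{Part (iii).} For the outer inequalities I would use that $\hat A$ is obtained from $A$ by replacing some entries with $\infty$, which (under the convention that $\infty$ is read as $-\infty$ in the max-plus/min-plus sense, as stated after \Cref{def:active-element}) only removes arcs from the associated digraph; removing arcs cannot increase the maximum cycle mean nor decrease the minimum cycle mean, giving $\lambda_1(A)\le\lambda_1(\hat A)$ and $\lambda_n(\hat A)\le\lambda_n(A)$. For the inner inequalities $\lambda_1(\hat A)\le\lambda_p(A)\le\lambda_n(\hat A)$, note $\lambda_1(\hat A)$ is the minimal cycle mean of $\hat A$ and $\lambda_n(\hat A)$ its maximal cycle mean; since by part (ii) $\lambda_p(A)$ is itself a cycle mean of $\hat A$, it automatically lies between these extremes. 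The main subtlety to handle carefully is well-definedness of $\lambda_1(\hat A)$ and $\lambda_n(\hat A)$ — i.e.\ that $\hat A$ has at least one cycle at all; but this is immediate from part (ii), which exhibits a cycle of $\hat A$ with finite weights realizing mean $\lambda_p$, so both the min and max cycle means of $\hat A$ exist and sandwich $\lambda_p$.

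The step I expect to require the most care is part (iii): one must be explicit that $\lambda_1$ and $\lambda_n$ of a matrix with some $\infty$ (effectively $-\infty$) entries are to be computed over the cycles that survive in the digraph of finite entries, and that such cycles exist — everything then follows from the elementary monotonicity of min/max cycle means under arc deletion together with part (ii).
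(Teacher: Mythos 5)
Your proposal is correct and follows essentially the same route as the paper: part (i) via the contrapositive of \Cref{ALUinactive} and \Cref{LUPQinactive}, part (ii) by combining \Cref{p:satcycle} with part (i), and part (iii) from monotonicity of $\lambda_1,\lambda_n$ under arc deletion together with part (ii). Your explicit remark on the well-definedness of $\lambda_1(\hat A)$ and $\lambda_n(\hat A)$ matches the paper's own parenthetical note, so there is nothing to add.
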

\begin{proof} (i): Follows from \Cref{ALUinactive} and \Cref{LUPQinactive}.\\
Note that, due to this inclusion, there is at least one finite entry in each row of $\hat{A}$. Therefore $\Pos(A,p/n)$ has at least one cycle. In particular, the maximum cycle mean $\lambda_n(\hat{A})$ and the minimum cycle mean $\lambda_1(\hat{A})$ are both finite (assuming that only finite cycle means are considered both in the minimization and in the maximization).\\
(ii) Follows from part (i) and since $\lambda_p(A)$ is a cycle mean in $\Sat(A,p/n,x)$ by \Cref{p:satcycle}.
(iii): The inequalities $\lambda_1(A)\leq \lambda_1(\hat A)$ and $\lambda_n(\hat{A})\leq \lambda_n(A)$ follow by the monotonicity of $\lambda_1$ and $\lambda_n$, and the inequalities $\lambda_1(\hat{A})\leq \lambda_p(A)\leq \lambda_n(\hat{A})$ are due to part (ii).
\end{proof}

\subsection{Refinement Procedure}
\label{ss:refinement}

We will now present a heuristic refinement procedure for $\overline{Z}_p$ based on the observations made in the previous subsection.

As shown in \Cref{ex:refinement}, the stabilized zone $\overline{Z}_p$ and its DBM representation $\overline{D}_p$ may give us an ``incomplete'' active element matrix. However, we already know that the eigenvalue $\lambda_p(A)$ satisfies $l\leq \lambda_p(A)\leq u$ where $l=\max\{L(1,1),\ldots,L(n,n)\}$ and $u=\min\{U(1,1),\ldots,U(n,n)\}$ for $L$ and $U$ defined by \eqref{e:LU}. The main idea for our refinement procedure is to check whether $\lambda_p(A)\geq m$ or $\lambda_p(A)\leq m$ where $m=(l+u)/2$. For this we define the following zones: 
\begin{equation}
\label{e:eigen-refine1}
X=\overline{Z}_p\cap \{x\mid A(i,j)+x_j\geq m+x_i,\; T(i,j)\; \text{is finite}\}.
\end{equation}
and
\begin{equation}
\label{e:eigen-refine2}
Y=\overline{Z}_p\cap \{x\mid A(i,j)+x_j\leq m+x_i,\; T(i,j)\; \text{is finite}\}.
\end{equation}
If the over-approximation of $X\cap E_p(A)$ computed by \Cref{rem:Dp} is empty, then we can conclude that $\lambda_p(A)<m$ and set $\overline{Z}_p$ equal to the over-approximation of $Y\cap E_p(A)$ (computed by the same method). If the over-approximation of $Y\cap E_p(A)$ is empty, then we can conclude that $\lambda_p(A)>m$ and set $\overline{Z}_p$ equal to the over-approximation of $X\cap E_p(A)$. If both approximations are non-empty then we can take their intersection as an over-approximation of $E_p(A)$ and have to exit with a conjecture that $\lambda_p(A)=m$. When this is not the case, we update the matrices $U,L,P,Q,T,\hat{A}$ using the new over-approximation, and then update the bounds $l$ and $u$ on $\lambda_p(A)$ using the minimum and maximum cycle means of $\hat{A}$ (see \Cref{prop:possibly-active-element}(iii)). Then we compute $m=(l+u)/2$, redefine $X$ and $Y$ using \Cref{e:eigen-refine1} and \Cref{e:eigen-refine2} and repeat the above.

%\todo[inline]{There is no equivalence between the inequalities and the non-emptiness of over-approximations (?)}
\if{
To show that $\lambda_p(A)\geq m $, with the help of active element matrix $T$, one needs to check the non-emptiness of over-approximation $X\cap E_p(A)$ where
\begin{equation}
\label{e:eigen-refine1}
X=\overline{Z}_p\cap \{x\mid A(i,j)+x_j\geq m+x_i,\; T(i,j)\; \text{is finite}\}.
\end{equation}
Similarly, the condition $\lambda_p(A)\leq m $ is equivalent to the non-emptiness of over-approximation $Y\cap E_p(A)$ where
\begin{equation}
\label{e:eigen-refine2}
Y=\overline{Z}_p\cap \{x\mid A(i,j)+x_j\leq m+x_i,\; T(i,j)\; \text{is finite}\}.
\end{equation}
}\fi

%From such over-approximation, one can obtain the refinement of $\overline{Z}_p$ and its DBM representation $\overline{D}_p$. Following this, 
%Using the new over-approximation the matrices $U,L,P,Q,T,\hat{A}$ can be updated as well, which also improves the lower and upper bounds of $\lambda_p(A)$.

The refinement procedure is formally summarized in \Cref{alg:refinement}. 
%After initialization and setting $m:=(l+u)/2$, in Steps 4-6 two DBMs are constructed according to \Cref{e:eigen-refine1,e:eigen-refine2} and the over-approximation of their intersections with $E_p(A)$. On Step 7, the DBM $\overline{D}_p$ alongside $l$ and $u$ is updated. This eventually yields the updated strongly active element matrix $T$ and possibly active element matrix $\hat{A}$ in Step 9. In Step 10, we update the rows of $T$ and $\hat{A}$ in the following cases: (i) there is at least one finite element in any row of $T$ or (ii) there are exactly one finite element in any row of $\hat{A}$. In Step 11, we update the values for $l$ and $u$ as per \Cref{prop:possibly-active-element}(ii). If $l=u$, then we already obtain the eigenvalue and we can terminate the algorithm. Otherwise, we return to Step 3.

\begin{algorithm}[!ht]
\caption{Refinement procedure}
\label{alg:refinement}
{\rm
Input: $A\in \R^{n\times n},~p\in [n]$, stabilized DBM $\overline{D}_p$, strongly active element\\
\hspace*{7ex}matrix $T$, possibly active element matrix $\hat{A}$\\
Output: refined DBM $\overline{D}_p$, eigenvalue $\lambda_p(A)$, strongly active element\\
\hspace*{8.5ex} matrix $T$, possibly active element matrix $\hat{A}$
\begin{enumerate}
\item Compute matrices $L$ and $U$ by \eqref{e:LU}; and $P$ and $Q$ by \eqref{e:PQ}
\item Compute $l:=\max_{i\in [n]}\{L(i,i)\}$ and $u=\min_{i\in [n]}\{U(i,i)\}$
\item Set $m: = (l+u)/2$
\item Set $B:=\overline{D}_p$ and $C:=\overline{D}_p$
\item For each $(i,j)\in [n]\times [n]$ do\\
If $T(i,j)$ is finite; then $B(j,i):=\max\{B(j,i),m-A(i,j)\}$\\ $C(i,j):=\max\{C(i,j),A(i,j)-m \}$
% \begin{align*}
% X:=&\overline{Z}_p\cap \{x\mid A(i,j)+x_j\geq m+x_i,\; T(i,j)\; \text{is finite}\}\\
% Y:=&\overline{Z}_p\cap \{x\mid A(i,j)+x_j\leq m+x_i,\; T(i,j)\; \text{is finite}\}
% \end{align*}
\item Compute $\overline{B}$ and $\overline{C}$, the DBM representation for the over-approximation of $\zone{B}\cap E_p(A)$ and $\zone{C}\cap E_p(A)$ respectively (see \Cref{rem:Dp})
\item  If $\overline{B}$ and $\overline{C}$ contain no positive diagonal element, then\\
$\overline{D}_p:=(\overline{B}\oplus \overline{C})^\ast, ~l:=m, ~u:=m$\\
 Else if $\overline{B}$ contains positive diagonal elements, then $\overline{D}_p:=\overline{C}, ~u:=m$\\
 Else if $\overline{C}$ contains positive diagonal elements, then $\overline{D}_p:=\overline{B}, ~l:=m$
\item Update matrices $U,L,P$ and $Q$ according to the new $\overline{D}_p$
\item Update matrices $T$ and $ \hat{A}$ according to the new $U,L,P$ and $Q$
\item For each $i\in [n]$ do\\
If $T(i,\cdot)$ contains finite element; then $\hat{A}(i,\cdot):=T(i,\cdot)$ \\    
Else if $\hat{A}(i,\cdot)$ has only one finite element; then $T(i,\cdot):=\hat{A}(i,\cdot)$

% \item If all rows $T$ has finite element and $l>u$ then;
% \[
% l:=\lambda_1(T), ~m:=\lambda_1(T)
% \] 
\item $l:=\max\{l,\lambda_1(\hat{A})\}$, $u:=\min\{u,\lambda_n(\hat A)\}$
\item If $l=u$; then return $\overline{D}_p,l,T,\hat{A}$\\
Else go to Step 3
\end{enumerate}
}
\end{algorithm}

\begin{exmp}
{\rm From the preceding \Cref{ex:refinement}, based on the values of $l$ and $u$, one can compute $m=7$. The assumption of $\lambda_2(A)\geq 7$ and $\lambda_p(A)\leq 7$ respectively yield the following zones (represented as DBMs). The underlined entries are due to \Cref{e:eigen-refine1,e:eigen-refine2}.

\[
\begin{bmatrix}
% 0 &-5 & -8 &-2& 1\\
%   \underline{7/2} & 0 & -3  &0 &4\\
%   3 &-1  & 0&  2 &5\\
%  -2 &-5 & \underline{-9/2} & 0 &0\\
%  -5 &-8& -10 &-4 &0\\
        0&6&3&-5\\
        -12&0&-7&-11\\
        -5&\underline{4}&0&\underline{-7}\\
        \underline{2}&11&4&0
    \end{bmatrix}
    \in\dbm{X},
    \begin{bmatrix}
% 0 &\underline{-7/2} & -8 &-2& 1\\
%   2 & 0 & -3  &0 &4\\
%   3 &-1  & 0&  \underline{9/2} &5\\
%  -2 &-5 & -7 & 0 &0\\
%  -5 &-8& -10 &-4 &0\\
        0&6&3&\underline{-2}\\
        -12&0&\underline{-4}&-11\\
        -5&1&0&-10\\
        -1&11&\underline{7}&0
    \end{bmatrix}
    \in\dbm{Y}.
\]
    One can check that both over-approximation $\overline{X}$ of $X\cap E_2(A)$ and over-approximation  $\overline{Y}$ of $Y\cap E_2(A)$ are non-empty; the refined over-approximation for the eigenspace is
\[
\overline{D}_2:=
 \begin{bmatrix}
% 0 &{-7/2} & -11/2 &-1& 2\\
%   3 & 0 & -2  &5/2 &11/2\\
%   5 &3/2 & 0&  {9/2} &7\\
%  1/2 &-3 & -5 & 0 &5/2\\
%  -5/2 &-6& -15/2 &-3 &0\\
0 & 9 & 5 & -2\\
 -9 & 0 &-4 &-11\\
 -5 & 4 & 0  &-7\\
  2 &11 & 7 &  0
    \end{bmatrix}\in\dbm{\overline{X}\cap \overline{Y}}
\]
Using $\overline{D}_2$ the updated strongly active matrix is
\[
T=
\begin{bmatrix}
     %  \cdot & 6 &\cdot & \cdot & \cdot\\
     % \cdot & \cdot &\cdot &\cdot& 15\\
     %  \cdot &\cdot& \cdot &14  &\cdot\\
     % 10 &\cdot  & \cdot &\cdot & \cdot\\
     %  \cdot &\cdot & 2  &\cdot& \cdot
      \cdot & \cdot &\cdot&  {5}\\
        \cdot & \cdot &{3} &\cdot\\
       \cdot & \cdot& \cdot&  0\\
        \cdot & \cdot &{14}& \cdot\\
\end{bmatrix}.
\]
Furthermore, we also obtain $\lambda_2=m=7$. Notice that, all columns of $\overline{D}_2$ are the multiple of the first one. One could also check that the first column of $\overline{D}_2$ is indeed a maxmin-$1/2$ eigenvector associated with $\lambda_2=7$.

% Since all rows of $T$ contain finite elements, the eigenvalue is the cycle mean in $\act[A][3/5]$ i,e., $\lambda_3=47/5$. Furthermore, the approximation of eigenspace $E_3(A) $ can be refined into $\overline{Y}\cap\{x\in \R^5\mid A(i,j)+x_j=47/5+x_i, T(i,j)~\text{is finite} \}$ which can be expressed as a DBM
% \[
% \begin{bmatrix}
%     0 &-17/5 & -26/5 &-3/5& 11/5\\
%   17/5 & 0 & -9/5  &14/5 &28/5\\
%   26/5 &9/5 & 0&  23/5 &37/5\\
%  3/5 &-14/5 & -23/5 & 0 &14/5\\
%  -11/5 &-28/5& -37/5 &-14/5 &0\\
% \end{bmatrix}.
% \]
}
%Notice that all columns}
%\todo[inline]{finish the sentence?}    
 %    Under the same assumption we also obtain the following inequalities:
 %    \begin{equation}
 %    \label{e:neweqs}
 %    \begin{split}
 %        x_2+6\geq x_1+47/5 &\Rightarrow x_2-x_1\geq 17/5\\
 %        x_5+15\geq x_2+47/5&\Rightarrow x_5-x_2\geq -28/5\\
 %        x_4+14\geq x_3+47/5&\Rightarrow x_4-x_3\geq -23/5
 %        \end{split}
 %    \end{equation}
 %    Taking maximum of $\overline{D_3}$ and the DBM encoding \Cref{e:neweqs} we obtain the DBM
 %    \[
 %    \overline{D}=
 %    \begin{bmatrix}
 %          0 &-17/5 &-26/5&  -3/5& 11/5\\
 % 17/5 &    0&  -9/5 & 14/5 &28/5\\
 %  26/5 &  9/5 &    0&  23/5 &37/5\\
 %   3/5& -14/5 &-23/5 &    0& 14/5\\
 % -11/5& -28/5 &-37/5& -14/5 &   0\\
 %    \end{bmatrix}.
 %    \]
 %    Since $\zone{\overline{D}}$ is non-empty, it indeed contains the eigenspace and the new lower and upper bounds for $\lambda_3$ are true. Repeating the process by assuming $\lambda_3>47/5$ leads to empty set. Hence, $\lambda_3=47/5$. It also turns out that $E_3(A)=\overline{D}$.
\end{exmp}
\begin{rem}
    {\rm \Cref{alg:refinement} depends on the existence of finite entries in $T$ as per \Cref{e:eigen-refine1,e:eigen-refine2}. If all elements of $T$ are $\infty$, then in Steps 4-7 we will have $B=C=\overline{D}_p$ and $l=u=m$ (which is not always $\lambda_p(A)$). For this reason, \Cref{alg:refinement} is still a heuristic one and might not give us a refinement of $E_p(A)$. Hence, the algorithm is terminated once the eigenvalue $\lambda_p(A)$ is known or conjectured.}
\end{rem}

It should be noted that \Cref{alg:refinement} may fail to yield a strongly active matrix $T$ whose all rows contain finite entries. Interestingly, this may happen when there are multiple cycles in $\Act(A.\omega)$ whose average weight equals $\lambda_p$. As a result, one could obtain multiple essentially different eigenvectors. Another interesting fact is that one can obtain maxmin-$\omega$ eigenvectors  from some columns of the (refined) DBM $\overline{D}_p$. The following example showcases this finding.

\begin{exmp}
\normalfont
\label{exmp:interesting}
    Suppose we have a matrix
    \[
    A=\begin{bmatrix}
        4&13&5&2\\
        10&4&18&0\\
        5&14&19&7\\
        18&2&9&0
    \end{bmatrix}.
    \]
    For $\omega=1/2$, by using \Cref{a:sequence,alg:refinement}, one can obtain $\lambda_2=4$ and
    \[
    \overline{D}_2=\begin{bmatrix}
        0&-4&-5&-2\\
        -9&0&-10&2\\
        1&1&0&3\\
        -11&-2&-12&0\\
    \end{bmatrix},
    T=\begin{bmatrix}
        4&\cdot&\cdot&\cdot\\
        \cdot&4&\cdot&\cdot\\
        \cdot&\cdot&\cdot&\cdot\\
        \cdot&2&\cdot&\cdot\\
    \end{bmatrix},
    \hat{A}=\begin{bmatrix}
        4&\cdot&\cdot&\cdot\\
        \cdot&4&\cdot&\cdot\\
        5&15&\cdot&7\\
        \cdot&2&\cdot&\cdot\\
    \end{bmatrix}.
    \]
    Observe that 
    \[A\otimes_{1/2} \overline{D}_2-
    \overline{D}_2=
    \begin{bmatrix}
        4&4&4&4\\
        4&4&4&4\\
        4&4&0&4\\
        4&4&4&4
    \end{bmatrix}
    \]
    which confirms that the 1st, 2nd, and 4th columns of $\overline{D}_2$ are indeed eigenvectors associated with the eigenvalue $\lambda_2=4$. One could check that two of these columns, the 1st and 2nd ones, are essentially different. Interestingly, the 3rd column of $\overline{D}_2$ can be expressed as a min-plus combination of the first two columns of $\overline{D}_2$ namely $\overline{D}_2(\cdot,3)=(-1)\otimes \overline{D}_2(\cdot,1) \oplus^\prime (-1)\otimes \overline{D}_2(\cdot, 2)$, but it is not an eigenvector.
    
\end{exmp}

To show the performance of \Cref{alg:refinement} (and \Cref{a:sequence} as its implicit part), we present some numerical experiments. For all experiments, the elements of matrices are integers in $[0,100]$, and $100$ matrices are generated for each dimension $5\leq n\leq 10$. The experiments have been implemented in Python on an AMD Ryzen 4800H, 2.90 GHz and 16GB of RAM. For each $n$, we run the experiments for $\omega=p/n$ where $2\leq p\leq n-1$. We do not include the cases when $p=1$ and $p=n$ as they represent the min-plus and max-plus eigenproblems, respectively. Hence, there are 3300 experiments in total.

\Cref{tab:running-time} indicates the results of the experiments. Each cell shows the average running time out of the successful ones and the success rate which is shown in brackets. Although the proposed algorithms are heuristic, the success rate is quite high. Based on the table, it is also evident that the value of $p$ heavily affects the running time. Furthermore, for each dimension, the algorithm runs the slowest when $p\approx n/2$. This is due to \Cref{eq:min-max-block-v2} involving minimization with $\binom{n}{p}\times \binom{n}{n+1-p}$ terms. 

\begin{table}[!ht]
    \centering
    \caption{The average running time in second and the success rate.}
     \label{tab:running-time}
\begin{tabular}{|c|c|c|c|c|c|c|c|c|c|c|}
\hline
\diagbox[width=1cm, height=1cm]{$n$}{$p$}&2&3&4&5&6&7&8&9\\
\hline
\multirow{2}{*}{5}&0.27 &0.48 &0.21&\multirow{2}{*}{-}&\multirow{2}{*}{-}&\multirow{2}{*}{-}&\multirow{2}{*}{-}&\multirow{2}{*}{-}\\
&(100) &(100) &(99)&&&&&\\\hline
\multirow{2}{*}{6}&0.69 &2.29 &2.58&0.70&\multirow{2}{*}{-}&\multirow{2}{*}{-}&\multirow{2}{*}{-}&\multirow{2}{*}{-}\\
&(100) &(100)&(100) &(99)&&&&\\\hline
\multirow{2}{*}{7}&1.66 &8.73 &14.74&8.20&1.80&\multirow{2}{*}{-}&\multirow{2}{*}{-}&\multirow{2}{*}{-}\\
&(100) &(100) &(100)&(100)&(100)&&&\\\hline
\multirow{2}{*}{8}&4.49 &27.32 &73.72&74.53&26.71&3.68&\multirow{2}{*}{-}&\multirow{2}{*}{-}\\
&(100) &(100) &(100)&(99)&(99)&(100)&&\\\hline
\multirow{2}{*}{9}&9.97 & 93.92 &391.79 &466.37 & 338.29& 94.24&17.36 &\multirow{2}{*}{-}\\
&(100) &(99) &(100)&(100)&(100)&(99)&(100)&\\\hline
\multirow{2}{*}{10}& 15.75& 189.00 & 927.21& 2135.31&2110.00 &989.90 &197.87 &15.80\\
&(100) &(100) &(100)&(100)&(100)&(100)&(100)&(100)\\\hline
\end{tabular}
\end{table}

For all successful experiments conducted, it is always the case that there exists $i\in [n]$ such that $\overline{D}_p(\cdot,i)$ is an eigenvector, like in \Cref{exmp:interesting}. This suggests the following conjecture.
\begin{con}
    For each matrix $A\in \R^{n\times n}$ with rational entries and $\omega=p/n$ for $p\in [n]$, for which  \Cref{alg:refinement} yields the eigenvalue $\lambda_p$, it also yields an eigenvector which can be obtained from at least one column of $\overline{D}_p$.
\end{con}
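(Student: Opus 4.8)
Since the statement is about the output of a heuristic and is only conjectured, the plan is to isolate the structural facts that necessarily hold after a successful run of \Cref{alg:refinement} and then argue that they push one column of $\overline{D}_p$ into $E_p(A)$; I will point out where this last step is likely to break, which is presumably why the statement is phrased as a conjecture rather than a theorem. First I would put $\overline{D}_p$ in canonical form (this changes neither $\overline{Z}_p$ nor strong definiteness, the non-emptiness checks in Step~7 ruling out positive cycles), so that every column $\overline{D}_p(\cdot,j)$ lies in $\overline{Z}_p$, and in fact $\overline{D}_p(\cdot,j)$ is the coordinate-wise \emph{smallest} vector of $\overline{Z}_p$ with $j$-th coordinate $0$. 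At a successful termination $m=\lambda_p$, so the updates in Steps~4--7 force
\[
\overline{Z}_p\ \subseteq\ \{x\mid A(i,j)+x_j=\lambda_p+x_i\ \text{for all}\ (i,j)\in\Act(A,\omega)\},
\]
and by \Cref{LAUactive} each such $(i,j)$ is active for every $x\in\overline{Z}_p$. Hence, for every $x\in\overline{Z}_p$ and every node $i$ that is the tail of an $\Act(A,\omega)$-arc one already has $(A\otimes_\omega x)_i=\lambda_p+x_i$; and by \Cref{p:LUbounds} applied to the refined data, every column is an ``almost eigenvector'', $L(i,i)\le(A\otimes_\omega x)_i-x_i\le U(i,i)$ for all $i$.

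Next I would choose the candidate to be a column $\overline{D}_p(\cdot,j_0)$ whose index $j_0$ can reach, along arcs of $\Act(A,\omega)$, a cycle of $\Act(A,\omega)$. This requires a preliminary lemma — that after a successful run $\Act(A,\omega)$ contains a cycle (automatically critical, since by the argument of \Cref{prop:active-element}(i) \emph{every} cycle of $\Act(A,\omega)$ has mean $\lambda_p$) — which I would extract from $\emptyset\neq E_p(A)\subseteq\overline{Z}_p$, from \Cref{p:satcycle}(ii), and from the tightening of $T$ and $\hat A$ in Steps~9--10. For every node $i$ lying on an $\Act(A,\omega)$-path into the critical cycle, the equality $(A\otimes_\omega x)_i=\lambda_p+x_i$ with $x=\overline{D}_p(\cdot,j_0)$ is then already covered by the previous paragraph, since such $i$ is a tail of an active arc; these are the ``easy'' rows.

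What remains are the rows $i$ that are not tails of any $\Act(A,\omega)$-arc — exactly the situation of \Cref{exmp:interesting}, where row~$3$ of $T$ is empty and the column $\overline{D}_2(\cdot,3)$ (with node~$3$ unable to reach an $\Act$-cycle) fails to be an eigenvector precisely in that row. For such $i$ only $L(i,i)<\lambda_p<U(i,i)$ is available, and one must show that the \emph{specific} vector $x=\overline{D}_p(\cdot,j_0)$ still attains $\lambda_p$ there. I would combine: (a) an \emph{upper bound} — since $x=\overline{D}_p(\cdot,j_0)$ is the pointwise-minimal normalized vector of $\overline{Z}_p\supseteq E_p(A)$, any eigenvector $y$ with $y_{j_0}=0$ has $y\ge x$, so $A\otimes_\omega x\le A\otimes_\omega y=\lambda_p+y$, and together with the active-row equalities and the fact that $\Pos(A,\omega)$ contains $\Sat(A,\omega,y)$ (\Cref{prop:possibly-active-element}(i)) one should obtain $(A\otimes_\omega x)_i-x_i\le\lambda_p$; and (b) a \emph{lower bound} — every surviving term $A(i,\ell)+x_\ell$ on row~$i$ corresponds to an arc of $\Pos(A,\omega)$, for which \Cref{p:PQbound} gives $A(i,\ell)+\overline{D}_p(\ell,i)\le\lambda_p\le A(i,\ell)-\overline{D}_p(i,\ell)$, and the eliminations of \Cref{ALUinactive,LUPQinactive} should have discarded enough too-small terms so that the $\lceil\omega n\rceil$-th smallest survivor is $\ge\lambda_p+x_i$. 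The main obstacle, I expect, is step~(b): proving that for the minimal column $\overline{D}_p(\cdot,j_0)$ the inactive-entry eliminations really leave at least $n-p$ ``large'' and at most $n-p$ ``small'' shifted terms on \emph{every} non-active row. This can fail for a generic $A$ and only becomes true once the run has ``succeeded'', so a fully rigorous proof likely needs first to recast ``\Cref{alg:refinement} yields $\lambda_p$'' as an intrinsic fixed-point condition on $(\overline{D}_p,T,\hat A)$ — for instance that $\overline{Z}_p$ is nonempty, stable under the refinement map, and satisfies $l=u$ — and then to establish the column statement under that hypothesis.
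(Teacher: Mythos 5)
The paper does not prove this statement: it is stated as a \emph{conjecture}, supported only by the 3300 numerical experiments reported in the table, and the authors explicitly leave it open. So there is no proof of record to compare your argument against; what matters is whether your sketch actually closes the question, and it does not --- as you yourself acknowledge. Your structural observations are sound and consistent with the paper's machinery: in canonical (Kleene-star) form each column $\overline{D}_p(\cdot,j)$ does lie in $\overline{Z}_p$ and is the pointwise-minimal vector of the zone with $j$-th coordinate $0$; every cycle of $\Act(A,\omega)$ has mean $\lambda_p$ because its arcs are active for every $x\in E_p(A)\neq\emptyset$; and the rows that cause trouble are exactly those with no finite entry in $T$, which is precisely the situation of \Cref{exmp:interesting}, where the third column fails to be an eigenvector in row $3$.

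The genuine gaps are in the two halves of your final step, and they are not merely technical. In (a), from $y\geq x=\overline{D}_p(\cdot,j_0)$ and monotonicity you get $(A\otimes_\omega x)_i\leq (A\otimes_\omega y)_i=\lambda_p+y_i$, which is an upper bound against $y_i$, not against $x_i$; since $y_i>x_i$ is exactly what happens in the non-active rows, this does not yield $(A\otimes_\omega x)_i-x_i\leq\lambda_p$, and no result in the paper (\Cref{LAUactive}, \Cref{p:LUbounds}, \Cref{prop:possibly-active-element}) bridges that gap. In (b), the eliminations of \Cref{ALUinactive} and \Cref{LUPQinactive} only constrain which entries can be active for \emph{eigenvectors}; they say nothing about which term is the $\lceil\omega n\rceil$-th smallest in row $i$ when evaluated at the specific non-eigenvector candidate $\overline{D}_p(\cdot,j_0)$, so counting ``surviving'' terms of $\hat{A}$ does not control $(A\otimes_\omega x)_i$. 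Your closing suggestion --- to recast successful termination as an intrinsic fixed-point condition on $(\overline{D}_p,T,\hat{A})$ and prove the column statement under that hypothesis --- is the right diagnosis of what a real proof would require, but as it stands the statement remains a conjecture, and your proposal should be presented as a programme rather than a proof.
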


\section{Conclusions}
This paper investigates eigenproblems \(A \otimes_{\omega} x = \lambda \otimes x\) within maxmin-\(\omega\) systems. These problems extend the min-plus and max-plus eigenproblems \(A \otimes x = \lambda \otimes x\) and \(A \otimes' x = \lambda \otimes x\), where a threshold \(\omega \in (0, 1]\) determines which term \(A(i, j) + x_j\) is selected in each \(i\)-th equation. We propose a novel method utilizing zone manipulation to over-approximate the eigenspace, expressed through Difference Bound Matrices (DBMs). This allows for some matrix-based operations to find the eigenspace over-approximation. We have constructed an over-approximation sequence of zones that eventually stabilizes, also enabling the determination of what we call the strongly active element matrix. If this matrix contains at least one finite entry in each row, the eigenvalue can be easily computed from the sole critical cycle in the strongly active graph. Otherwise, \Cref{alg:refinement} is used to refine the approximation. Numerical examples show that the proposed algorithms have a high success rate. Interestingly, in every successful experiment, at least one column of the DBM representation of the (refined) approximation set is always an eigenvector corresponding to the obtained eigenvalue.

For future work, we aim to investigate eigenproblems in maxmin-\(\omega\) systems using alternative methods such as the power algorithm and policy iteration.

%\bibliographystyle{abbrv}
%\bibliography{bibliography}

\end{document}